\documentclass[12pt]{amsart}

\usepackage[margin=1.35in]{geometry}
\usepackage{amscd,amsmath,amsxtra,amsthm,amssymb,stmaryrd,xr,mathrsfs,mathtools,enumerate,commath, comment}
\usepackage{stmaryrd}
\usepackage[dvipsnames]{xcolor}
\usepackage{commath}
\usepackage{comment}
\usepackage{tikz-cd}
\usepackage{longtable} 
\usepackage{pdflscape} 
\usepackage{booktabs}
\usepackage{hyperref}
\hypersetup{
 colorlinks=true,
 linkcolor=Maroon,
 filecolor=Plum,      
 urlcolor=Purple,
 citecolor=RedViolet,
 pdftitle={Iwasawa Invariants for symmetric square representations},
    }
\usepackage{enumerate}
\usepackage[utf8]{inputenc}
\usepackage[OT2,T1]{fontenc}
\usepackage{color}

\DeclareSymbolFont{cyrletters}{OT2}{wncyr}{m}{n}
\DeclareMathSymbol{\Sha}{\mathalpha}{cyrletters}{"58}

\numberwithin{equation}{section}
 
\usepackage{color}
\DeclareSymbolFont{cyrletters}{OT2}{wncyr}{m}{n}
\DeclareMathSymbol{\Sha}{\mathalpha}{cyrletters}{"58}

\newcommand{\Qcyc}{\Q_{\op{cyc}}}
\newcommand{\Sel}{\op{Sel}_{p^\infty}}

\newcommand{\bfV}{\mathbf{V}}
\newcommand{\bfA}{\mathbf{A}}

\newcommand{\Z}{\mathbb{Z}}
\newcommand{\cO}{\mathcal{O}}
\newcommand{\cH}{\mathcal{H}}
\newcommand{\bfT}{\textbf{T}}

\newcommand{\p}{\mathfrak{p}}
\newcommand{\Q}{\mathbb{Q}}

\newcommand{\op}[1]{\operatorname{#1}}
\newcommand{\F}{\mathbb{F}}

\theoremstyle{plain}
 \theoremstyle{definition}
\newtheorem{Th}{Theorem}[section]
\newtheorem{Lemma}[Th]{Lemma}

\newtheorem{hypothesis}[Th]{Hypothesis}

\newtheorem{Corollary}[Th]{Corollary}
\newtheorem{Proposition}[Th]{Proposition}
\newtheorem{Remark}[Th]{Remark}
 \theoremstyle{definition}
\newtheorem{Definition}[Th]{Definition}
\newtheorem{Conjecture}[Th]{Conjecture}

\newcommand{\cc}{{\mathbf{C}}}

\newcommand{\zz}{{\mathbb{Z}}}

\newcommand{\T}{{\mathbf{T}}}

\newcommand{\qbar}{\bar{\Q}}

\newcommand{\frm}{{\mathfrak m}}

\newcommand{\gal}{{\operatorname{Gal}}}
\newcommand{\frob}{{\operatorname{Frob}}}

\renewcommand{\ker}{{\operatorname{ker}}}

\renewcommand{\hom}{{\operatorname{Hom}}}
\newcommand{\ord}{{\operatorname{ord}}}

\newcommand{\ol}{\overline}

\newcommand{\oo}{{\mathcal O}}

\newcommand{\pp}{{\mathfrak p}}

\newcommand{\sym}{{\operatorname{Sym^2}}}

\newcommand{\ad}{{\operatorname{Ad}}}

\begin{document}

\title[Iwasawa Invariants for Symmetric Square Representations]{Iwasawa Invariants for Symmetric Square Representations}

\author[A.~Ray]{Anwesh Ray}
\email[Ray]{anweshray@math.ubc.ca}
\author[R.~Sujatha]{R. Sujatha}
\email[Sujatha]{sujatha@math.ubc.ca}
\author[V.~Vatsal]{Vinayak Vatsal}
\email[Vatsal]{vatsal@math.ubc.ca}
\address{Department of Mathematics\\
University of British Columbia\\
Vancouver BC, Canada V6T 1Z2}

\subjclass[2010]{11R23 (primary)}
\keywords{Iwasawa theory, symmetric square representations, congruences, p-adic L-functions}

\begin{abstract}
Let $p\geq 5$ be a prime, and $\p$ a prime of $\overline\Q$
above $p$. Let $g_1$ and $g_2$ be $\p$-ordinary, $\p$-distinguished
and $p$-stabilized cuspidal newforms of nebentype characters $\epsilon_1, \epsilon_2$ respectively, 
and even weight $k\geq 2$, whose associated newforms have level prime to $p$. 
Assume that the residual representations at $\p$ associated to $g_1$ and $g_2$ are absolutely irreducible and isomorphic. Then, the imprimitive $p$-adic L-functions associated with the symmetric square representations are shown to exhibit a congruence modulo $\p$. Furthermore, the analytic and algebraic Iwasawa invariants associated to these representations of the $g_i$ are shown to be related. Along the way, we give a complete proof of the integrality of the $\p$-adic L-function, normalized with Hida's
canonical period. This fills a gap in the literature, since, despite the result being widely accepted, no 
complete proof seems to ever have been written down. On the algebraic side, we establish the 
corresponding congruence for Greenberg's Selmer groups, and verify that  the Iwasawa
main conjectures for the twisted symmetric square representations for $g_1$ and $g_2$ are compatible
with the congruences.
\end{abstract}

\maketitle

\section{Introduction}

\par Let $g_1$ and $g_2$ be eigencuspforms 
of the same even weight $k\geq 2$ and levels $M_1$ and 
$M_2$ respectively, normalized so that  $a(1, g_1)=1, i =1, 2$, where $a(1, g_i)$
denotes the $1$-st Fourier coefficient of $g_i$. 
Throughout, we fix a prime $p\geq 5$, and $\p|p$ a prime in the ring of integers $\cO_L$ 
of a suitably large number field $L$ containing the Hecke eigenvalues of $g_1$ and $g_2$,
as well as the values of the nebentype characters $\epsilon_i$ of the $g_i$. 
Let $\mathcal{O}$ denote the completion of $\cO_L$ at $\p$ and write $K$ for
the fraction field of $\cO$.  We
also fix an isomorphism of  $\cc_p$ with $\cc$, where $\cc_p$ is the completion of $\qbar$ at 
a prime above $\p$.
Assume 
that all but finitely many of the Hecke eigenvalues at primes $\ell$ of the $g_i$ are
congruent modulo $\p$. Then, we say that the newforms
$g_i$ are $\p$-congruent, or simply $p$-congruent, since $\p$ is fixed.
In the situation of $p$-congruent newforms, there is a general 
philosophy that the critical values of any L-functions functorially associated
to $g_1$ and $g_2$ will also be congruent, in a suitable sense. 
Furthermore, one 
expects the $p$-adic L-functions of $g_1$ and $g_2$, if they exist, to also be congruent. 
Finally, the corresponding $p$-primary Selmer groups defined over the cyclotomic 
$\Z_p$-extension of $\Q$ should also be related.

\par One case where these expectations can be verified was given by Greenberg and the third author of the present work
in \cite{GV00}, where they
studied the main conjecture of Iwasawa theory for 
the standard 2-dimensional representations associated to modular forms. In the 2-dimensional case, 
the $p$-adic L-functions of ordinary $p$-congruent modular forms
are the well-known $p$-adic L-functions arising from modular symbols; 
the Selmer groups are Greenberg's $p$-ordinary Selmer groups; and
the general expectation of the previous paragraph take the following precise shape:
the Iwasawa invariants of the Selmer groups and $p$-adic
L-functions, associated to congruent Hecke eigencuspforms, are related by an explicit formula.
The authors of \cite{GV00} combine their formula with deep results of Kato \cite{kat04}
to deduce certain cases of the Main Conjecture of Iwasawa
theory.

The primary goal of the present paper is to generalize the explicit relationship
between the Iwasawa invariants of the congruent forms $g_1, g_2$ from 
the case of the standard representation of dimension 2 to the case of
the symmetric square representation, which has dimension 3. A secondary 
accomplishment in this paper is the complete proof of the integrality  
for the $p$-adic L-functions of degree 3, since this result (although apparently known to experts)
seems not to be found in the literature. Implicit in this discussion
of integrality is a careful normalization of the periods appearing in the definition
of the $p$-adic L-function. This is a subtle point: it turns out to be
quite difficult to show that the normalization which gives rise to congruences
coincides with the canonical normalization given by Hida. We show that Hida's period
gives the correct congruences if a certain variant of Ihara's lemma holds. This lemma 
is unknown in weight $k >2$ in the generality which we require, although cases of it
are known weight 2. We remark also that all the results in this paper are much
easier to prove in the case of weight $2$ and the main novelty lies in the results for higher 
weight.

To state our results, we require some hypotheses and notation. We work
basically under the same hypotheses as in Loeffler-Zerbes \cite{lz16} and Schmidt \cite{schmidt88}, which we now describe.
Let $a(n, g_i)$  
denote the $n$-th Fourier coefficient 
of the eigenform $g_i$. In this paper, we make the following running assumptions. 
\begin{itemize}
    \item The forms $g_i$ are $\p$-ordinary, i.e., that $a(p, g_i)$ is a $\p$-adic unit for $i=1,2$;
    \item The $g_i$ are $p$-stabilized newforms, meaning that the level
$M_i$ of $g_i$ is divisible by precisely the first power of $p$; that each $g_i$ 
is an eigenvector for the $p$-th Hecke
operator $U_p$ (with unit eigenvalue at $\p$), and that  
the level of the newform $g_{0, i}$ 
associated to $g_i$ has level $M_i$ or $M_i/p$.
\item The nebentype characters $\epsilon_i$  are trivial on $(\Z/p\Z)^*$, and 
\item the  common 2-dimensional Galois representation with values in $\op{GL}_2(\bar{\F}_p)$ associated to the
    $g_i$ and $\p$ is  irreducible and $\p$-distinguished (in the sense of \cite{wil95}, page 481). 
    \end{itemize}

 \begin{Remark} The cases above cover situations where the representations $\rho_{g_i}$ are either
 crystalline or semistable. We have excluded the case of potentially good reduction, namely,
 ordinary eigenforms of level $M_0p^r,  (M_0, p)=1$,
 such  the character  $\epsilon_i$ of $g_i$ is nontrivial on
 $(\Z/p^r\Z)^*$ .  The exclusion is only to keep the calculations simple, 
 and the paper relatively short,
 as all the explicit
 formulae take on a different shape in the potentially good case. In particular, there are additional Euler-like factors in the
 interpolation formula for the $p$-adic
 L-function (see \cite{hid90}, equation (0.2) on page 97), 
 and the definition of the canonical integral period must also be adapted. Our method below applies to
 ordinary $p$-new forms, as well as to $\Lambda$-adic forms, but the calculations are repetitive, and 
 do not add anything new beyond different cases of the formulae. Note that Loeffler-Zerbes \cite{lz16} and Schmidt \cite{schmidt88}
 do not treat the case of ordinary semistable reduction. Our treatment applies to this case without change. 
 \end{Remark}
 
\begin{Remark} It is well-known that the symmetric square of a CM form is reducible, and that many statements
 about the symmetric square that hold in the non-CM case are no longer true. 
 However, we do \emph{not} exclude the case that the $g_i$ are CM forms. 
 We assume only that the Selmer groups defined below
 are cotorsion (which is known in the non-CM case \cite{lz16} under some hypotheses,  
 and also known in the CM case
 whenever it is expected to be true \cite{mw86}, \cite{rub91}) and that the auxiliary characters $\psi$ under consideration 
 are suitably restricted to avoid the problematic cases. The restriction
 on $\psi$ could be removed by following the analysis in \cite{hid90}, but we have not pursued this. 
\end{Remark}

To continue, we require more notation. Let the $g_i$ be as above.
Let $\Q_{\op{cyc}}$ denote the cyclotomic $\Z_p$-extension of $\Q$, i.e., the unique $\Z_p$-extension of $\Q$ contained in $\Q(\mu_{p^\infty})$,
where $\mu_{p^\infty}$ denotes the $p$-power roots of unity. Let 
$\Lambda=\cO[[\text{Gal}(\Q_{\op{cyc}}/\Q)]]$ denote the usual
Iwasawa algebra.  The group $\text{Gal}(\Q(\mu_{p^\infty})/\Q)$ is canonically identified
with $\zz_p^\times$ via the action on the $p$-power roots of unity.  
For a prime $q\neq p$, we normalize our Frobenius
elements so that $\frob(q)$ acts via $\zeta\mapsto \zeta^q$, where $\zeta$ is a $p$-power root of unity.
Then we may write $\frob(q)\in
\text{Gal}(\Q(\mu_{p^\infty})/\Q)\cong\zz_p^\times$ as $$\frob(q) = q = \eta_1(q)q_w$$
where $\eta_1$ is the Teichm\"uller character taking values in the $(p-1)$-th roots of unity, and $q_w$
is the projection of $q$
to the wild part, namely, projection
to the pro-$p$ subgroup $1+p\zz_p$. The reason for the subscript $1$ is to label
characters of $\zz_p^\times$; any character $\eta$ is a product of a wild part $\eta_w$, and a tame
part $\eta_t=\eta_1^t$. Eventually we will use this labelling of characters to write the interpolation formula
for the various power series giving rise to the $p$-adic L-functions.

If we fix a rank-$2$ Galois-stable $\cO$-lattice $T_{g_i}$ in the $p$-adic representation associated to $g_i$ 
(normalized so that $\frob(q)$ has 
trace equal to the $q$-Hecke eigenvalue of $g_i$, for almost all $q$)
we may view the representations $\rho_{g_i}$ as taking values in $\cO$. 
For $i=1,2$, let
\[\rho_{g_i}:\op{Gal}(\bar{\Q}/\Q)\rightarrow \op{GL}_2(\cO)\]be the associated Galois representation.
The residual representation is 
denoted by $\bar{\rho}_{g_i}:=\rho_{g_i}\mod{\p}$. 
Since the modular forms $g_1$ and $g_2$ are $\p$-congruent,
and since we have assumed that $\bar{\rho}_{g_1}$ and $\bar{\rho}_{g_2}$ are absolutely irreducible, 
we find that $\bar{\rho}_{g_1}$ 
and $\bar{\rho}_{g_2}$ are isomorphic. We will simply write $\ol\rho$ to denote the common residual 
representation. Since the residual representation is assumed to be irreducible, the lattices
$T_{g_i}$ are unique up to homothety.

Let $\psi$ denote a Dirichlet character of conductor $c_\psi$, where $(c_\psi, p)=1$. In this paper we will assume that
\begin{itemize}
    \item $\psi$ is even;
    \item the reduction of $(\psi\epsilon_i)^2\neq 1\mod{\pp}$, for the characters $\epsilon_i$ of the $g_i$; and
    \item the coefficient field $L$ contains the values of $\psi$.
\end{itemize}
We may view $\psi$ as a Galois character via $\psi(\frob(q))=\psi(q)$. Conversely, we may view a Galois
character as a Dirichlet character, simply by inverting this convention. In particular, 
the Teichm\"uller character $\eta_1$ mentioned above may be viewed as a Dirichlet character mod $p$, 
with values in $\mu_{p-1}$. The second condition above is inserted to rule out the problematic diehdral cases.

For each even integer $t$, we let $\psi_t$ denote the Dirichlet character $\psi\eta_1^t$. Let
$r_{g_i}=\op{Sym}^2(\rho_{g_i})$ denote the symmetric square representation
for $g_i$, with $i=1,2$, viewed as taking values in the symmetric square
of the lattice $T_{g_i}$. 
In this setting the representations
$r_{g_i}\otimes\psi:\op{Gal}(\bar{\Q}/\Q)\rightarrow \op{GL}_3(\cO)$ are residually isomorphic. Let $\bfA_{i, \psi_t}$ be the 
$p$-primary 
representation associated to the underlying Galois-stable $\cO$-lattice for $r_{g_i}\otimes\psi_t$. 
 Note that since $g_i$ is 
$p$-ordinary, 
so is $r_{g_i}\otimes\psi$. For each value of $t$ as above, 
we work with the primitive Selmer group $\op{Sel}_{p^\infty}(\bfA_{i,\psi_t}/\Q_{\op{cyc}})$ as defined by Greenberg in 
\cite{Gre89}. We are assuming that $\op{Sel}_{p^\infty}(\bfA_{i,\psi_t}/\Q_{\op{cyc}})$ 
is $\Lambda$-cotorsion,
 and this allows us to define a nonzero algebraic $p$-adic L-function 
$\mathscr{L}^\text{alg}(r_{g_i}\otimes\psi_t)\in \Lambda$ as a generator of the characteristic ideal of the Pontrjagin
dual of the Selmer group. It is well-defined up
to a unit factor in $\Lambda^\times$. We also point out that the existence of $\mathscr{L}^\text{alg}(r_{g_i}\otimes\psi_t)$ and part of the main conjecture is 
proven, in some cases, in unpublished
work of Urban \cite{urban06}. The arguments in this paper do not assume the main conjecture. 
By the Weierstrass preparation theorem, 
\[\mathscr{L}^\text{alg}(r_{g_i}\otimes\psi_t)=p^{\mu} a(T) u(T),\] where $a(T)$ is a distinguished polynomial and $u(T)$ is a unit in $\Lambda$ 
(each depending on $i$ and $t$). 
The $\mu$-invariant $\mu^{\op{alg}}(r_{g_i}\otimes\psi_t)$ is the number $\mu$ in the above factorization, and the $\lambda$-invariant $\lambda^{\op{alg}}(r_{g_i}\otimes\psi_t)$ is the degree of $a(T)$.

\par Next, we define a primitive $p$-adic L-function $\mathscr{L}^\text{an}(r_{g_i}\otimes\psi_t)$, for $i=1,2$.
This is essentially done in old work of Schmidt and others; see \cite{schmidt88} for the basic source,
and the discussion in \cite{lz16} for an account of the various refinements. 
In this paper, we shall normalize and label our $p$-adic L-functions as in \cite{Gre89}, to facilitate comparision with the Selmer groups defined there.

Under the present hypotheses, work of
Schmidt, Hida, and Dabrowski-Delbourgo proves the existence of an element 
$\mathscr{L}^\text{an}(r_{g_i}\otimes\psi_t)\in \Lambda\otimes\Q$ satisfying
a certain interpolation property with respect to special values of the complex symmetric square L-function. 
The interpolation property defining
the $p$-adic L-function
is given in formula (\ref{interpolation}) below. 

It is important to remark that the definition of  $\mathscr{L}^\text{an}(r_{g_i}\otimes\psi_t)$
presupposes the choice of a certain transcendental period.  We discuss our normalization 
and our choice of period in further detail in Section 2 of this paper, and summarize the key points later in this introduction
(see equation \eqref{canonical period def}).
 According to our normalization, the main conjecture predicts that the two power series
$\mathscr{L}^\text{alg}(r_{g_i}\otimes\psi_t)$ and $\mathscr{L}^\text{an}(r_{g_i}\otimes\psi_t)$
are equal up to a unit factor in $\Lambda$. 

Once the integrality of the $p$-adic L-function is established, we have 
well-defined Iwasawa invariants 
$\lambda^\text{an}(r_{g_i}\otimes\psi_t), \mu^\text{an}(r_{g_i}\otimes\psi_t)
\in \Z_{\geq 0}$ on the 
analytic side as well. The main conjecture implies that 
$$\lambda^\text{an}(r_{g_i}\otimes\psi_t)=\lambda^{\op{alg}}(r_{g_i}\otimes\psi_t)$$ and
$$\mu^\text{an}(r_{g_i}\otimes\psi_t)
=\mu^{\op{alg}}(r_{g_i}\otimes\psi_t)$$ for $i=1,2$. 

Our goal is to relate the invariants for the congruent forms $g_1$ and $g_2$. However, the primitive invariants
as defined above are not equal; it is quite possible for the primitive invariants to be trivial in one case 
yet highly nontrivial in the other. The correct relationship, as discovered in \cite{GV00}, is between
\emph{imprimitive} Iwasawa invariants, which we now proceed to define. Thus 
let $S_0$ denote any finite set of primes $q\neq p$. For each $i$,
we have an imprimitive Selmer group, obtained by relaxing the local
conditions at all the primes $q\in S_0$. It is shown
in \cite{GV00} that the imprimitive
Selmer group is cotorsion if and only if the primitive Selmer group
is so. Thus we have imprimitive invariants
$\lambda^{\op{alg}}_{S_0}(r_{g_i}\otimes\psi_t)$ and $\mu^{\op{alg}}_{S_0}(r_{g_i}\otimes\psi_t)$.  
For each prime $q$, we can define an integer $\sigma_i^{(q)}(\psi_t)$ to be the degree of a certain polynomial
coming from applying the Weierstrass preparation theorem to the 
annihilator of a local cohomology group at $q$, which is known
unconditionally to be torsion, and whose annihilator can be 
described explicitly in terms of Euler factors. Then the basic result is the
following.

\begin{Proposition}
\label{algebraic-invariants-intro} Assume that the Selmer groups $\op{Sel}_{p^\infty}(\bfA_{i,\psi_t}/\Q_{\op{cyc}})$
are cotorsion as $\Lambda$-modules.
The following statements hold:
\begin{itemize}
    \item $\mu_{S_0}^{\op{alg}}(r_{g_i}\otimes\psi_t)=
   \mu^{\op{alg}}(r_{g_i}\otimes\psi_t)$, and
  \item  $\lambda_{S_0}^{\op{alg}}(r_{g_i}\otimes\psi_t)=
   \lambda^{\op{alg}}(r_{g_i}\otimes\psi_t) + \sum_{q\in S_0} \sigma_i^{(q)}(\psi_t).$
\end{itemize}
\end{Proposition}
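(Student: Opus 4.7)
The plan is to realize the quotient of the imprimitive by the primitive Selmer group as a direct sum of purely local cohomology groups at the primes of $S_0$, and then compute the Iwasawa invariants of these local terms.

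First I would write down the defining short exact sequence
\[
0 \longrightarrow \op{Sel}_{p^\infty}(\bfA_{i,\psi_t}/\Qcyc) \longrightarrow \op{Sel}^{S_0}_{p^\infty}(\bfA_{i,\psi_t}/\Qcyc) \longrightarrow \bigoplus_{q \in S_0} \mathcal{H}_q(\bfA_{i,\psi_t}) \longrightarrow 0,
\]
where $\mathcal{H}_q(\bfA_{i,\psi_t})$ denotes the direct sum, over primes $v$ of $\Qcyc$ lying above $q$, of the local $H^1$ of $\bfA_{i,\psi_t}$ modulo Greenberg's local condition at $v$. The left-hand inclusion is formal from the definitions. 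Surjectivity is the analogue of the basic lemma of \cite{GV00} in the present degree-$3$ setting: it follows from the assumed $\Lambda$-cotorsion of the primitive Selmer group combined with the global Euler characteristic formula, which together force the global-to-local restriction map defining the Selmer group to be surjective onto the relaxed local factors.

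Next I would analyse each local term for $q \in S_0$. Since $q \neq p$ is finitely decomposed in $\Qcyc/\Q$, the local cohomology at $v \mid q$ can be computed via inflation-restriction from the unramified extension. A standard calculation (as in \cite{GV00}, Section 2, or \cite{Gre89}) identifies the Pontrjagin dual of $\mathcal{H}_q(\bfA_{i,\psi_t})$ with a finitely generated $\cO$-module whose characteristic polynomial as a $\Lambda$-module is, up to units, the image of the $q$-th Euler factor of the imprimitive symmetric square $L$-series of $g_i$ twisted by $\psi_t$ under the cyclotomic variable. This polynomial is distinguished with $p$-adic unit leading term, so $\mathcal{H}_q(\bfA_{i,\psi_t})^{\vee}$ is $\Lambda$-torsion with vanishing $\mu$-invariant, and its $\lambda$-invariant is by construction precisely $\sigma_i^{(q)}(\psi_t)$.

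Finally, dualizing the displayed exact sequence and invoking additivity of $\mu$- and $\lambda$-invariants in short exact sequences of finitely generated torsion $\Lambda$-modules yields the two asserted identities. The main obstacle I foresee is justifying the surjectivity at the right-hand end of the initial sequence: this step is essential, requires the $\Lambda$-cotorsion hypothesis, and must rule out global obstructions coming from $H^0(\Qcyc, \bfA_{i,\psi_t})$. In the symmetric square setting this follows by the argument of \cite{GV00} once one observes that $H^0(\Qcyc, \bfA_{i,\psi_t}) = 0$, which is a consequence of the absolute irreducibility of $\ol\rho$ together with the running hypothesis that $(\psi \epsilon_i)^2 \not\equiv 1 \pmod{\p}$.
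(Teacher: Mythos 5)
Your proposal is correct and follows essentially the same route as the paper: surjectivity of the global-to-local map defining the primitive Selmer group (using the $\Lambda$-cotorsion hypothesis together with the vanishing of the relevant $H^0$, which the hypothesis $(\psi\epsilon_i)^2\not\equiv 1\pmod{\p}$ guarantees) yields the identification of $\op{Sel}^{S_0}_{p^\infty}(\bfA_{i,\psi_t}/\Qcyc)/\op{Sel}_{p^\infty}(\bfA_{i,\psi_t}/\Qcyc)$ with $\bigoplus_{q\in S_0}\cH_q(\bfA_{i,\psi_t}/\Qcyc)$, and the local terms at $q\in S_0$ are cotorsion with $\mu=0$ and $\lambda$-invariant $\sigma_i^{(q)}(\psi_t)$, whence the two identities follow by additivity. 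The one small imprecision is that the surjectivity criterion (\cite{GV00}, Proposition 2.1) is applied in the paper to the finiteness of $H^0(\Qcyc,\bfT_{i,\psi_t}^*)$ for the Tate dual rather than to $H^0(\Qcyc,\bfA_{i,\psi_t})$; since $(\op{Ad}^0)^*\cong \op{Ad}^0(1)$, both vanish under the stated hypotheses, so your argument goes through after this substitution.
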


Most of this proposition is carried through from \cite{GV00}, 
where an analysis of the local conditions and cohomology groups
is made under very mild hypotheses. 

Similar considerations apply on the analytic side.
By virtue of a construction due to Coates, Hida, and Schmidt, there
exists an imprimitive $p$-adic L-function $\mathscr{L}^{\op{an}}_{S_0}(r_{g_i}\otimes\psi_t)\in \Lambda\otimes\Q$.
Once again, $S_0$ denotes a finite set of primes $q\neq p$. 

We want to study congruences and integrality, but the results of 
Hida, Coates, and Schmidt only produce elements of $\Lambda\otimes\Q$. Thus we have to show
that the $p$-adic L-functions actually lie in $\Lambda$. 
This integrality property  is a folklore result for which a complete proof seems 
not to have ever been written down. It is stated as Proposition 2.3.5
in \cite{lz16}, where it is attributed to Hida, but no reference is given. 
Despite searching many papers of Hida, we were unable to find a complete proof, except
for the sketch in \cite{hida-adjoint-L-values}, which is only complete in weight 2, and subject
to various complicated restrictions.  
Therefore, our first task is to provide a through discussion of integrality, and give an 
integral construction valid for all weights. The proof turns out to be somewhat delicate,
 once the weight is large compared to $p$. 
 And, as we have remarked above, the choice of periods is important
 and must be specified precisely. For now, we simply record that Hida's theory
 leads to a choice of period $\Omega_{g_i}^{\op{can}}$ that is determined
 up to a $\p$-adic unit, and that it is this choice that one would like to use. The precise definition
 of $\Omega_{g_i}^{\op{can}}$
 is given in equation \eqref{canonical period def}. 
  
To state the result, let us fix $g=g_i, M=M_i, \epsilon =\epsilon_i$. In this theorem, $\psi$ 
will denote an even character of conductor
$c_\psi$ coprime to $p$ such that $(\psi\epsilon_i)^2\neq 1\mod{\pp}$, and $S_0$ is any finite set of primes $q\neq p$.
 Recall
that we have set $\psi_t=\psi\eta_1^t$, for any even $t$. 

\begin{Th}
\label{integrality-thm-intro} 
Let the notation be as above.
Then the primitive L-function $\mathscr{L}^\text{an}(r_{g}\otimes\psi_t)\in \Lambda$ is integral, 
normalized with the integral period $\Omega_g^{\op{can}}$. 
The imprimitive $p$-adic L-function
$\mathscr{L}^\text{an}_{S_0}(r_{g}\otimes\psi_t) \in \Lambda$ is integral as well, for the same choice
of period. 
\end{Th}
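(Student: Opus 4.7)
The plan is to reduce the imprimitive case to the primitive one, and then to establish primitive integrality by realizing $\mathscr{L}^{\op{an}}(r_g\otimes\psi_t)$ as the specialization at $g$ of an explicit integral $\Lambda$-adic Rankin--Selberg construction. First, by the construction of the imprimitive L-function in \cite{schmidt88} and \cite{lz16}, one has
\[
\mathscr{L}^{\op{an}}_{S_0}(r_g\otimes\psi_t)=\mathscr{L}^{\op{an}}(r_g\otimes\psi_t)\cdot\prod_{q\in S_0}P_q(\psi_t),
\]
where each $P_q(\psi_t)\in\cO[\![T]\!]$ is an Euler-like polynomial built from the local Galois representation of $r_g\otimes\psi_t$ at $q$. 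These factors are visibly integral, so the assertion about $\mathscr{L}^{\op{an}}_{S_0}$ reduces to the primitive statement.

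For the primitive case, I would follow Hida's Rankin--Selberg method, based on Shimura's integral representation
\[
L(\op{Sym}^2 g\otimes\psi_t,s)\ \sim\ \bigl\langle g,\,\delta^{m}(g\cdot E_{\psi_t}(s))\bigr\rangle\cdot(\text{elementary factors}),
\]
divided by $\langle g,g\rangle$, which equals $\Omega_g^{\op{can}}$ up to a $\p$-adic unit by the defining property \eqref{canonical period def}. Here $E_{\psi_t}$ is a suitable Eisenstein series and $\delta^{m}$ an iterated Maass--Shimura operator. The strategy is to construct a $\Lambda$-adic Eisenstein measure interpolating the $E_{\psi_t}$ (following Katz and Deligne--Ribet) with values in the space of $p$-adic modular forms over $\cO$, to form its Rankin--Selberg convolution with $g$, to apply Hida's ordinary projector $e_{\op{ord}}$ so that the result lies in the ordinary $\Lambda$-adic Hecke module, and finally to project onto the $g$-isotypic component.

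The crucial step at which integrality is won is this last projection. Under the hypotheses that $\bar\rho$ is absolutely irreducible and $\p$-distinguished, the localization of the universal ordinary Hecke algebra at the maximal ideal associated to $\bar\rho$ is Gorenstein (in fact a complete intersection, by Wiles and Taylor--Wiles), so the $g$-isotypic projection from this algebra to $\Lambda$ is induced by pairing against a generator of the associated congruence module. Up to a $\p$-adic unit, this generator is precisely the datum that defines $\Omega_g^{\op{can}}$, whence the normalized projection sends $\Lambda$-adic integral forms into $\Lambda$ rather than into $\Lambda\otimes\Q$.

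The principal obstacle is to control the Maass--Shimura operator $\delta^{m}$ when the weight $k$ is large relative to $p$. Naively, $\delta^{m}$ introduces denominators involving $(k-2)!$, which cease to be $p$-adic units once $k>p$, and the holomorphic projection to the ordinary part can a priori lose further powers of $p$. To avoid this, one must replace holomorphic projection by Katz's $\theta$-operator on $p$-adic modular forms and exploit its compatibility with $e_{\op{ord}}$; this is the calculation sketched only for $k=2$ in \cite{hida-adjoint-L-values}, and extending it to arbitrary weight is the delicate technical step. The key idea is to use Hida's control theorem to pass to the universal $\Lambda$-adic setting, where the weight no longer appears explicitly and integrality can be verified a single time for the whole Hida family rather than weight-by-weight.
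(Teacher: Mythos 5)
Your outline has the right ingredients (Rankin--Selberg integral, ordinary projector, Gorenstein/congruence-module normalization), but it stops short of a proof exactly where the theorem lives. The step you defer as ``the delicate technical step'' --- controlling the denominators coming from the Maass--Shimura operators and the holomorphic projection when $k$ is large relative to $p$ --- is precisely the gap in the literature that the statement is meant to close, so invoking a hoped-for extension of Hida's weight-$2$ sketch via Katz's $\theta$-operator and a $\Lambda$-adic Eisenstein measure does not constitute an argument. The paper's mechanism here is concrete and avoids families altogether: the nearly holomorphic kernel (after the Atkin--Lehner and trace manipulations) has $\p$-integral Fourier coefficients at the exponents that matter, $U_p$ multiplies each Maass--Shimura constituent by a positive power of $p$, and hence the integral forms $\tilde{H}_\chi(n)\circ U_p^m$ converge to $\tilde{H}_\chi(n)^{\mathrm{hol}}\circ e$, which is therefore $\p$-integral (Proposition \ref{ordinary holomorphic proj}); pairing with $f$ via the Gorenstein pairing \eqref{integral-pairing} then gives integrality of the special values in \eqref{integrality-formula-2}. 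Nothing in your proposal replaces this computation.

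There are also two structural problems. First, your reduction runs in the wrong direction: the Rankin--Selberg integral representation does not compute the primitive automorphic L-function $L(r_g\otimes\chi,s)$ but Shimura's Euler product $D_g(\chi,s)$ (or $D_f(\chi,s)$ for a depleted $f$), which differs from it by Euler factors at bad primes; so ``reducing the imprimitive statement to the primitive one'' leaves you with no direct construction to prove the primitive case. The paper argues the other way around: it proves integrality of $\mathscr{L}^{\op{an}}_{S_0}$ for one sufficiently large $S_0$ (containing $2$ and all bad primes, so that $D_f(\chi,s)=L_{S_0}(r_g\otimes\chi,s)$), applies Weierstrass preparation, and only then recovers the primitive L-function by \emph{dividing} out the integral factors $P_q$ in \eqref{prim-imprim}, using that each $P_q$ has $\mu$-invariant zero; arbitrary $S_0$ is then handled by multiplying back. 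Your proposal says nothing about why this division preserves integrality, nor about the multiplicity-one issue for the depleted form (Lemma \ref{Tm is a field}) that makes the Gorenstein evaluation possible at the common level $N$. Second, the claim that $\langle g,g\rangle$ equals $\Omega_g^{\op{can}}$ up to a $\p$-adic unit is false: by \eqref{canonical period def} their ratio is, up to a unit and a power of $p$ depending on $k$, the congruence number $(g_0,g_0)_{M_0}$, which is a non-unit exactly in the congruence situations of interest; your later paragraph on pairing against a congruence-module generator is the correct normalization, but as written the two are conflated, and one must also compare periods across levels, which is where Hypothesis \ref{hypothesis ihara} enters in the paper.
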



With the integrality theorem in hand, 
our main result  in Section 2 is the following. As stated here, the result is dependent
on the validity of a certain variant of Ihara's lemma (see Hypothesis \ref{hypothesis ihara}). 
One can make an unconditional statement at the cost
of introducing a certain ambiguity in the choice of periods, but the statement is somewhat clumsy, so we avoid it. 
Let
 $\lambda_{S_0}^{\op{an}}(r_{g}\otimes\psi_t)$ and $\mu^{\op{an}}_{S_0}(r_{g}\otimes\psi_t)$
denote the Iwasawa invariants of $L^\text{an}_{S_0}(r_{g}\otimes\psi_t) $.

\begin{Proposition} Let the notation and assumptions be as above. Assume further 
that Hypothesis \ref{hypothesis ihara} holds. Then the following statements hold.
\label{analytic-invariants-intro}
\begin{itemize}
    \item $\mu_{S_0}^\text{an}(r_{g}\otimes\psi_t)=
   \mu^{\op{an}}(r_{g}\otimes\psi_t)$, and
  \item  $\lambda_{S_0}^{\op{an}}(r_{g}\otimes\psi_t)=
   \lambda^{\op{an}}(r_{g}\otimes\psi_t) + \displaystyle\sum_{q\in S_0} \sigma_i^{(q)}(\psi_t)$
\end{itemize}
\end{Proposition}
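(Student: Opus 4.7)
The plan is to mirror the algebraic argument underlying Proposition \ref{algebraic-invariants-intro}, adapting the strategy of \cite{GV00} to the analytic side. For each $q \in S_0$, let $\mathcal{E}_q(\psi_t)\in\Lambda$ denote the element of the Iwasawa algebra obtained from the local polynomial $\det(1-\frob(q)^{-1}\,|\,(r_g\otimes\psi_t)^{I_q})$, with $\frob(q)=\eta_1(q)q_w$ interpreted as an element of $\Lambda$ via its image in $\gal(\Qcyc/\Q)$. The main step is to establish the factorization
\[
\mathscr{L}^{\op{an}}_{S_0}(r_g\otimes\psi_t) \;=\; \mathscr{L}^{\op{an}}(r_g\otimes\psi_t)\cdot \prod_{q\in S_0} \mathcal{E}_q(\psi_t)
\]
in $\Lambda$, after which the proposition will follow from the additivity of $\mu$ and $\lambda$ under multiplication in $\Lambda$.

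Both sides of the proposed factorization lie in $\Lambda$ by Theorem \ref{integrality-thm-intro}, so it suffices to check the identity on the Zariski-dense set of arithmetic specializations governed by the interpolation property of the $p$-adic L-functions. At such a point corresponding to a finite-order character $\eta$, the imprimitive side differs from the primitive side precisely by the product over $q\in S_0$ of the classical inverse Euler factor $\det(1-\frob(q)^{-1}\eta(\frob(q))\,|\,(r_g\otimes\psi_t)^{I_q})$, which by construction agrees with the image of $\mathcal{E}_q(\psi_t)$ at that specialization. This step uses only the explicit form of the interpolation formula as in \cite{schmidt88, lz16}; the role of Hypothesis \ref{hypothesis ihara} is implicit, via Theorem \ref{integrality-thm-intro}, in ensuring that both sides genuinely lie in $\Lambda$ rather than in $\Lambda\otimes\Q$.

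It remains to compute the Iwasawa invariants of each $\mathcal{E}_q(\psi_t)$. The equality $\lambda(\mathcal{E}_q(\psi_t))=\sigma_i^{(q)}(\psi_t)$ is essentially definitional: the integer $\sigma_i^{(q)}(\psi_t)$ was defined on the algebraic side as the Weierstrass degree of the annihilator of a local cohomology group at $q$, and that annihilator is, up to a unit in $\Lambda$, the same characteristic polynomial of Frobenius on $(r_g\otimes\psi_t)^{I_q}$. The main obstacle is the verification that $\mu(\mathcal{E}_q(\psi_t))=0$, i.e., that $\mathcal{E}_q(\psi_t)\not\equiv 0\pmod{\p}$. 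This reduces to showing, through a case analysis of the finitely many ramification patterns of the local representation $(r_g\otimes\psi_t)^{I_q}$ at $q\neq p$, that the characteristic polynomial survives modulo $\p$. The standing hypotheses---absolute irreducibility and $\p$-distinguishedness of $\bar\rho$, the non-triviality condition $(\psi\epsilon_i)^2\not\equiv 1\pmod{\p}$, and $(c_\psi,p)=1$---are exactly what is needed to exclude the degenerate cases where the local polynomial could collapse mod $\p$.
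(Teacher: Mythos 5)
Your route is the same as the paper's: establish the factorization $\mathscr{L}^{\op{an}}_{S_0}(r_g\otimes\psi_t)=\prod_{q\in S_0}P_q(\psi_t)\cdot\mathscr{L}^{\op{an}}(r_g\otimes\psi_t)$ in $\Lambda$ (this is exactly \eqref{prim-imprim}, which the paper deduces from the interpolation property via \eqref{local-factor}, just as you do by comparing specializations), and then conclude by additivity of Iwasawa invariants once each local factor is known to have $\mu$-invariant zero and $\lambda$-invariant equal to $\sigma_i^{(q)}(\psi_t)$.

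The step where your justification goes astray is the claim that $\mu(\mathcal{E}_q(\psi_t))=0$ requires a case analysis of ramification patterns at $q$, with the running hypotheses (absolute irreducibility, $\p$-distinguishedness, $(\psi\epsilon_i)^2\not\equiv 1\pmod{\p}$) being ``exactly what is needed'' to rule out collapse modulo $\p$. None of these hypotheses is relevant, and the step should not be deferred to an unexecuted case analysis: the vanishing is automatic for every $q\neq p$. Writing $\mathcal{E}_q(\psi_t)=\prod_j(1-a_j\gamma_q)$ with $a_j\in\cO$ and $\gamma_q$ the image of $\frob(q)$ in $\gal(\Qcyc/\Q)$, note that $\gamma_q$ has infinite order (a rational prime $q\neq p$ is not a root of unity in $\Z_p$), so its image in $\F[[T]]$ is a non-constant power series; hence each factor $1-\bar a_j\bar\gamma_q$ is nonzero modulo $\p$, and $\F[[T]]$ is a domain. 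This is precisely the paper's lemma that $P_q$ and $P'_q$ have $\mu$-invariant zero, proved there in one line from the fact that the Euler polynomials have constant term $1$. A second, milder point: calling $\lambda(\mathcal{E}_q(\psi_t))=\sigma_i^{(q)}(\psi_t)$ ``essentially definitional'' hides a genuine input. The imprimitive analytic $L$-function is defined by deleting the \emph{automorphic} degree-$3$ Euler factors, and their identification with the Galois-theoretic factors $\det\bigl(1-\frob(q)^{-1}\,\vert\,(r_g\otimes\psi_t)^{I_q}\bigr)$ --- which your specialization argument also uses silently --- is the local Langlands correspondence for $\op{GL}_3$ of \cite{GJ78}, as recapitulated in \cite{schmidt88}; the paper flags this as a nontrivial fact rather than a tautology. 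With these two repairs your argument coincides with the paper's proof.
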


Here the integers $\sigma_i^{(q)}(\psi_t)$ are the \emph{same} as the ones occurring
in the algebraic case, since, the Euler factors in the algebraic
and analytic sides are exactly the same. This is a deep fact: 
the equality between the Galois-theoretic Euler factors 
in the algebraic case and the complex Euler factors in the analytic
L-function is the local Langlands correspondence for the 
3-dimensional representations $r_{g}\otimes\psi_t$; see
\cite{GJ78}, or \cite{schmidt88}. 

With regard to the Iwasawa invariants for congruent forms, our result is as follows. 
Again, we include the Ihara Lemma
as a hypothesis, since formulating a general result without it would give a clumsy statement. We remark
however that the Ihara hypothesis may be proved to be true in weight 2. We will say more about this point
 in Section \ref{ihara-comments}. Consider the pair
$g_1, g_2$ of $p$-congruent forms, of level $M_1, M_2$ respectively. In the situation where we consider the
congruences, we have to delete the Euler factors at \emph{all} bad primes, just as in \cite{GV00}. We also
have to take care of the prime $q=2$, to apply the results of Shimura.
Thus, we let $S$ denote any set of primes $q$ containing
$q=2$ and all primes dividing $M_1M_2$. Let $S_0=S\backslash \{p\}$. 
\begin{Th}
\label{intro-thm}
Let the notation be as above. Assume Hypothesis \ref{hypothesis ihara}, 
and that the Selmer groups $\op{Sel}_{p^\infty}(\bfA_{i,\psi_t}/\Q_{\op{cyc}})$
are cotorsion as $\Lambda$-modules.
Then the following statements hold.
\begin{enumerate}
\item If $\mu_{S_0}^\text{an}(r_{g_1}\otimes\psi_t) = 0$, we have
$\mu_{S_0}^\text{an}(r_{g_2}\otimes\psi_t) = 0$, and 
$\lambda_{S_0}^\text{an}(r_{g_1}\otimes\psi_t) =
\lambda_{S_0}^\text{an}(r_{g_2}\otimes\psi_t)$.
\item If $\mu_{S_0}^{\op{alg}}(r_{g_1}\otimes\psi_t) = 0$, we have
$\mu_{S_0}^{\op{alg}}(r_{g_2}\otimes\psi_t) = 0$, and 
$\lambda_{S_0}^{\op{alg}}(r_{g_1}\otimes\psi_t) =
\lambda_{S_0}^{\op{alg}}(r_{g_2}\otimes\psi_t)$.
\item If $\mu_{S_0}^\text{an}(r_{g_1}\otimes\psi_t) =  \mu_{S_0}^{\op{alg}}(r_{g_1}\otimes\psi_t) = 0$, and
$\lambda_{S_0}^\text{an}(r_{g_1}\otimes\psi_t)=
\lambda_{S_0}^{\op{alg}}(r_{g_1}\otimes\psi_t)$, then
$\mu_{S_0}^\text{an}(r_{g_2}\otimes\psi_t) =  \mu_{S_0}^{\op{alg}}(r_{g_2}\otimes\psi_t) = 0$, and
$\lambda_{S_0}^\text{an}(r_{g_2}\otimes\psi_t)=
\lambda_{S_0}^{\op{alg}}(r_{g_2}\otimes\psi_t)$.
\end{enumerate}
\end{Th}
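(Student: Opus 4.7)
The plan is to follow the strategy of Greenberg--Vatsal \cite{GV00}: establish congruences modulo $\p$ between the imprimitive $p$-adic L-functions on the analytic side and between the characteristic ideals of the imprimitive dual Selmer groups on the algebraic side, then read off the statements about $\mu$- and $\lambda$-invariants from the Weierstrass preparation theorem. Part (3) will fall out formally from parts (1) and (2), combined with Propositions \ref{algebraic-invariants-intro} and \ref{analytic-invariants-intro}: once parts (1) and (2) are in hand, both imprimitive $\mu$-invariants vanish for $g_2$, and the imprimitive $\lambda$-invariants for $g_2$ agree with those for $g_1$ on each side, which coincide by the hypothesis of (3).

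For part (1), the key step is to prove
\[\mathscr{L}^\text{an}_{S_0}(r_{g_1}\otimes\psi_t) \equiv \mathscr{L}^\text{an}_{S_0}(r_{g_2}\otimes\psi_t) \pmod{\p}\]
in $\Lambda$. I would compare the two power series by evaluating at a Zariski-dense set of continuous $\cO$-valued characters of $\op{Gal}(\Qcyc/\Q)$ and invoking the interpolation formula (\ref{interpolation}). Because $S_0$ contains every prime dividing $M_1 M_2$, the Euler factors that genuinely differ between $g_1$ and $g_2$ have already been removed from both sides; the remaining quantities---critical values of the symmetric square L-function normalized by Hida's canonical period $\Omega^{\op{can}}_{g_i}$---are $\p$-integral by Theorem \ref{integrality-thm-intro}, and their reductions modulo $\p$ depend only on the residual Hecke data, which is common to $g_1$ and $g_2$. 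Hypothesis \ref{hypothesis ihara} is invoked precisely to guarantee that $\Omega^{\op{can}}_{g_1}$ and $\Omega^{\op{can}}_{g_2}$ differ by a $\p$-adic unit, so that the two normalizations are genuinely comparable mod $\p$. Granted the congruence, Weierstrass preparation finishes part (1): if $\mu_{S_0}^\text{an}(r_{g_1}\otimes\psi_t) = 0$ then $\mathscr{L}^\text{an}_{S_0}(r_{g_1}\otimes\psi_t) \not\equiv 0 \pmod{\p}$, hence the same holds for $g_2$, and the two reductions in $(\cO/\p)[[T]]$ differ by a unit, so their distinguished parts have equal degree, yielding equality of the $\lambda$-invariants.

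For part (2), I would adapt the Selmer-group argument of \cite{GV00}. Since $S_0$ contains all bad primes, the imprimitive Selmer group $\op{Sel}_{p^\infty}^{S_0}(\bfA_{i,\psi_t}/\Qcyc)$ imposes only the Greenberg ordinary condition at $p$, and that condition is determined by the unique inertia-fixed line in $\bar\rho$---well-defined and common to both $g_i$ by the $\p$-distinguished hypothesis. A standard Galois-cohomology argument, using the fundamental exact sequence relating $\bfA_{i,\psi_t}[\p]$ to $\bfA_{i,\psi_t}$ together with absolute irreducibility of $\bar\rho$ and the vanishing of local $H^0$'s at $v \in S_0$ (ensured by $(\psi\epsilon_i)^2 \neq 1 \pmod{\p}$), shows that the reduction modulo $\p$ of the Pontrjagin dual of the imprimitive Selmer group depends only on $\bar\rho$ and the common ordinary filtration. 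In particular the two such reductions for $g_1$ and $g_2$ are isomorphic $\Lambda/\p$-modules, which is equivalent to
\[\mathscr{L}^{\op{alg}}_{S_0}(r_{g_1}\otimes\psi_t) \equiv \mathscr{L}^{\op{alg}}_{S_0}(r_{g_2}\otimes\psi_t) \pmod{\p}\]
up to a unit in $\Lambda$. The Weierstrass argument of part (1) then delivers part (2).

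The main obstacle lies on the analytic side. The algebraic half follows the well-tested Greenberg--Vatsal template with only cosmetic modifications, but the L-function congruence requires both (a) integrality of the primitive and imprimitive $p$-adic L-functions relative to $\Omega^{\op{can}}_{g_i}$---Theorem \ref{integrality-thm-intro}, itself nontrivial for weight $>2$---and (b) control over the ratio $\Omega^{\op{can}}_{g_1}/\Omega^{\op{can}}_{g_2}$ as a $\p$-adic unit. Item (b) is exactly what Hypothesis \ref{hypothesis ihara} is engineered to provide, and is the reason the theorem must be stated conditionally in higher weight.
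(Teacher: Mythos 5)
There is a genuine gap on the analytic side (part (1)). Your argument reduces the congruence $\mathscr{L}^{\op{an}}_{S_0}(r_{g_1}\otimes\psi_t)\equiv u\,\mathscr{L}^{\op{an}}_{S_0}(r_{g_2}\otimes\psi_t)\pmod{\p}$ to the assertion that the $\p$-integral normalized special values have reductions mod $\p$ that ``depend only on the residual Hecke data.'' That assertion is precisely the hard content of the theorem and is not a consequence of Theorem \ref{integrality-thm-intro}: the critical values $D_{f_i}(\chi,n)/(\pi^n\Omega^{\op{can}}_{g_i})$ are attached to each form individually, and integrality alone gives no mechanism forcing their reductions to agree. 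The paper's proof supplies the mechanism: using Gouv\^ea's level-comparison result (Lemma \ref{gouvea90}) one depletes $g_1$ and $g_2$ at the common set $S_0$ to eigenforms $f_1,f_2$ of the \emph{same} level $N$ with $f_1\equiv f_2\pmod{\p}$; one builds a common integral Hecke-equivariant (Gorenstein) pairing on $S_k(N,\cO)_{\frm}$, replaces Shimura's nearly holomorphic kernel by its ordinary projection to keep integrality, and shows that the normalized value is, up to a unit, the value of the single $\cO$-linear functional $x\mapsto (x,\mathcal{H}_\chi(n)\circ e)_N$ at $f_i$. The congruence of special values (Theorem \ref{special values congruence}) then follows from $\cO$-linearity and $f_1\equiv f_2$, and Proposition \ref{p-adic LFs congruent} upgrades this to the congruence in $\Lambda$. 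You also misstate the role of Hypothesis \ref{hypothesis ihara}: it is not used (and not needed) to make $\Omega^{\op{can}}_{g_1}/\Omega^{\op{can}}_{g_2}$ a $\p$-adic unit --- no comparison between the two canonical periods is ever made; each L-function is normalized by its own period. Ihara's lemma is invoked separately for each $g_i$ to show that the period attached to the depleted level $N$ agrees up to a unit with the canonical period of the newform at its minimal level; without it the congruence still holds but only for level-$N$-dependent periods.

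On the algebraic side your outline matches the paper (residual Selmer groups at the imprimitive level depend only on $\bar\rho$ with its ordinary filtration, cf.\ Proposition \ref{Kim 2.10} and the surrounding lemmas), but two points are glossed over. First, the isomorphism $\Sel^{S_0}(\bfA_{1,\psi_t}/\Qcyc)[\p]\simeq \Sel^{S_0}(\bfA_{2,\psi_t}/\Qcyc)[\p]$ is not literally ``equivalent to'' a congruence of characteristic power series mod $\p$; what one actually extracts is the comparison of $\mu$ and $\lambda$. Second, to read $\lambda$ off the residual Selmer group one needs the structural input from \cite[Proposition 2.5]{GV00} that the dual Selmer group has no proper $\Lambda$-submodule of finite index, hence is $\cO$-cofree when $\mu=0$ (this is how Proposition \ref{prop 3.10} concludes $\lambda^{S_0}=\dim_{\F_p}\mathfrak{X}_i[\p]$); without this, finite submodules could inflate the dimension of the $\p$-torsion beyond $\lambda$. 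Part (3) being formal from (1), (2) and Propositions \ref{algebraic-invariants-intro}, \ref{analytic-invariants-intro} is correct and agrees with the paper.
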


It is clear from the relationships given in Propositions \ref{algebraic-invariants-intro} and 
\ref{analytic-invariants-intro}, that the third statement follows from
the first two. Furthermore, it follows from the third statement
that if one knows the main conjecture and vanishing of the $\mu$-invariants for $g_1$,
that the same conclusions follow for $g_2$. Examples where the main conjecture is known for a particular
form may be found in \cite{lz16}. 

\begin{Remark} Our theorems come in two flavours -- some  (e.g. Theorem
\ref{integrality-thm-intro}, and Proposition \ref{analytic-invariants-intro}) pertain 
to properties of a single form $g=g_i$, whilst others (Theorem \ref{intro-thm}) pertain to two congruent forms $g_1$ and $g_2$.
In the theorems dealing with a single form, the set $S_0$ is allowed to be an arbitrary set of
primes $q\neq p$, while in the case of congruences, we must assume that $S_0$ contains all primes
dividing the levels $M_1, M_2$, together with the prime $2$. 
\end{Remark}

In the rest of this introduction, we give a sketch of the arguments, indicating the various difficulties,
and the novelties needed to overcome them. 
We also take the opportunity to introduce some of the quantities and
notation that we will use in the course of this paper.

The main difficulties occur on the analytic side, starting with the integrality property described above.
The proof of the congruences of $p$-adic L-functions turns out to be 
 more delicate than in the case of \cite{GV00}, and relies crucially on knowledge of the possible
non-minimal deformations of $\ol\rho$ (Lemma \ref{gouvea90} below).
Indeed, to get the required results, one has to redo the Coates-Hida-Schmidt construction of the $p$-adic 
L-function, which goes back more than 30 years, 
and apply various refinements which were not available at that time. 

We start by writing down the Euler products and L-functions that are the subject of this paper.
The story is a bit complicated, because our \emph{results} concern the primitive and imprimitive automorphic
L-functions of $GL_3$ but the \emph{proofs} are concerned almost exclusively with certain related
degree 3 Euler products defined by Shimura, which are almost -- but not quite -- the same. Since the distinction
is important in understanding why the objects in the sketch of the proofs do not appear in the statements
of the theorems, 
we spell out the definitions in this introduction.

For notational simplicity, assume that $g=g_i$ is a $p$-stabilized
newform for some 
fixed value $i\in \{1,2\}$ and set $M:=M_i$. Then $M$ is divisible by precisely the first power of $p$. 
If  $g=\sum a(n ,g)q^n$, then the Dirichlet series $\sum_n a(n, g)n^{-s}$ has an Euler product of the form
$$\prod_q(1-\alpha_qq^{-s})^{-1}(1-\beta_qq^{-s})^{-1}$$
with certain parameters $\alpha_q, \beta_q$ at each prime $q$ (including $p$). If $q$ divides $M$, then
 one or both of these parameters may be zero. Whether or not these factors are zero may be succinctly 
 described as follows.
 
 Let $T\cong \oo\oplus\oo $ denote a realization of the Galois representation $\rho_g=\rho_{g_i}$, 
 as in the previous discussion above. If $q\neq p$ is a prime, we shall say that $\rho_g$ is \emph{ordinary} at $q$
 if the submodule $T^{I_q}$ of invariants under an inertia group $I_q$ has $\oo$-rank $1$. This terminology
 is taken from \cite{gouvea90}. Analogously, we say that $q$ is unramified if $T^{I_q}=T$ has rank 2, 
 and that $T$ is depleted if $T^{I_q}=0$. Then it is known that precisely one of $\alpha_q, \beta_q$ is non-zero
 when $q$ is ordinary; that both are zero if $q$ is depleted; and that both are non-zero when $q$ is unramified.
 As for $q=p$, we are in the $p$-stabilized case, so $\alpha_p$ is a unit, and $\beta_p=0$. 
 
Now let $\eta$ be an even finite order Dirichlet character of conductor $c_\eta=p^r$, and set
$\chi=\psi \eta$. Thus $\chi$ is an even Dirichlet character of conductor $c_\psi p^r$. 
We may write 
$\eta=\eta_t\eta_w$, where $\eta_t =\eta_1^t$, for the Teichm\"uller character $\eta_1$, and a character
$\eta_w$ of $p$-power order, so that
$\chi =\psi_t\eta_w$. We will use this notation throughout the paper.

 Let $S_0$ denote any set of prime numbers such that $p\notin S_0$. We allow that $S_0$ could be empty. 
Then the $S_0$-imprimitive symmetric square $L$-function (twisted by $\chi$)   is defined as follows:

\begin{equation}
\label{shimura-euler-product}
{L}_{S_0}(r_g\otimes\chi, s) = 
\prod_{q\notin S_0} P_q(q^{-s})^{-1}
\end{equation}
where $P_q(X)$ is a certain polynomial with algebraic integer coefficients (see Section 
\ref{primitive-ss-Lfunction}). There is no simple
description of the polynomials $P_q$, beyond the fact  that $P_q(q^{-s}) =
(1-\chi(q)\alpha_q\beta_q q^{-s})(1-\chi(q)\beta_q^2q^{-s})(1-\chi(q)\alpha_q^2q^{-s})$
for almost
all $q$, where $\alpha_q$ and $\beta_q$ are determined from the standard degree 2 Euler product given above. 
The exact form of the finitely many remaining factors $P_q$
may be found in \cite{schmidt88}, Section 1; we do not need the prescription here. The primitive
symmetric square L-function ${L}(r_g\otimes\chi, s)$ corresponds to the case $S=\emptyset$; it
is the L-function corresponding to an automorphic representation of $\operatorname{GL}_3$. 

Next we want to define Shimura's Euler products for the modular form $g$ and a certain depletion 
$f$ of $g$. Given the $p$-stabilized
modular form $g$ of level $M$, define
$$D_g(\chi, s) = \prod_q \left( (1-\chi(q)\alpha_q\beta_q q^{-s})(1-\chi(q)\beta_q^2q^{-s})(1-\chi(q)\alpha_q^2q^{-s})\right)^{-1},$$
where the product is taken over all primes $q$. This function is written as $D(s, g, \chi)$ by Shimura in \cite{shimura-holo}. We note
that Shimura's results don't require that $g$ be a newform; his results apply to any cuspform that is an eigenform for all the Hecke 
operators. It is not necessarily true that $D_g(\chi, s)$ coincides with the automorphic L-function $L(r_g\otimes\chi, s)$. 

The analogue of the imprimitive L-function $L_{S_0}(r_g\otimes\chi, s)$ is given by $D_f(\chi, s)$, where $f$ is the depletion of 
$g$ at the set of primes in $S_0$, defined as follows. If $g =\sum_n a(n, g)e^{2\pi i nz}$, then $f=\sum_n' a(n, g)e^{2\pi i nz}$ where the second 
sum is taken over integers $n$ such that $(n, q)=1$ for all  $q\in S_0$. The modular form $f$ is an eigenform of some level $N$,
and 
the standard $L$-function of $f$ admits a degree 2 Euler product. We get parameters $\alpha_q', \beta_q'$ associated 
to $f$ just as before,
so that $\alpha_q=\alpha'_q, \beta_q=\beta'_q$ if $q\notin S_0$, and $\alpha_q=\beta_q=0$ if not. 
Then we can follow Shimura and 
define a degree three Euler product $D_f(\chi, s)$ for $f$ just as above, with $\alpha_q',\beta_q'$ instead of $\alpha_q, 
\beta_q$. Again, it is not necessarily true in general that  $D_f(\chi, s) =L_{S_0}(r_f\otimes\chi, s)$. 
This fact does become true when $S_0$ is sufficiently large and (since $p\notin S_0$) when $\chi$ is ramified at $p$.  
For example, it suffices to assume that $S_0$ 
includes all the primes $q\neq p$ dividing the level $M$ of $g$
and the conductor $c_\chi$ of $\chi$. 


With these preliminaries on complex L-functions in hand, we can now describe the $p$-adic L-functions we consider. 
Let $g$ be a $p$-stabilized newform as above; let $S_0$ denote any sufficiently large finite set of primes $q\neq p$, and let
$f$ denote the depletion of $g$ at $S_0$. 
Let $\psi$ be a fixed even character of conductor relatively prime to $p$.
Set $\chi=\psi\eta=\psi_t\eta_w$, for $\eta$ of $p$-power conductor. Then
the imprimitive 
$p$-adic L-function $\mathscr{L}^\text{an}_{S_0}(r_{g}\otimes\psi_t)$ 
is defined via interpolation
of  $ D_f(\psi_t\eta_w, s)$ 
at critical values of $s$, and  $\eta_w$ varying over cyclotomic characters of $p$-power
order and $S_0$ is chosen sufficiently large to ensure that $D_f(\chi, s) =L_{S_0}(r_g\otimes\chi, s)$.
To write an interpolation formula, we view $\mathscr{L}^\text{an}_{S_0}(r_{g}\otimes\psi_t)$ as an element of
$\Lambda=\cO[[\text{Gal}(\Q_{\op{cyc}}/\Q)]]$. Given a finite order character $\eta_w$ of $\text{Gal}(\Q_{\op{cyc}}/\Q)\cong 1+ p\zz_p$, and a rational
integer $n$, let us write
$\eta_{w, n}$ for the character of $\text{Gal}(\Q_{\op{cyc}}/\Q)$ defined by
$$\eta_{w, n}(u) =\eta_w(u)\cdot u^{1-n}.$$ 
Thus we have  
$$\eta_{w, n}(\frob(q)) =(\eta_w\eta_1^{n-1})(\frob(q))q^{1-n},$$
for all primes $q\neq p$. 
 
The $p$-adic L-function is characterized (assuming it exists) by its specialization at any
infinite set of characters. We consider characters
of the form $\eta_{w, n}$, where $\eta_w$ is non-trivial, hence of conductor $p^r , r > 1$, and odd $n$ in the range
$1\leq n < k$. Let $$E_p(n,\eta_w) = E_p(n,\psi, \eta_w) =  (p^{n-1}\psi(p)^{-1}\alpha_p^{-2})^r.$$
Here $\alpha_p$ denotes the $U_p$ eigenvalue of $g$, as always. The number $\psi(p)$ is nonzero
because $\psi$ is assumed to be unramified at $p$. 
  If $\chi$ is any Dirichlet character, we let $G(\chi)$ denote the usual Gauss sum of $\chi$. 
  Fix an even $t$ with $0\leq t\leq p-2$. 
Then the characterizing interpolation formula is stated as follows: there exists a complex period 
$\Omega_{g_0}^{\op{can}}$ associated to the newform $g_0$, determined up to a $\p$-adic unit, and independent of  $S_0$ and the
corresponding depletion $f$ of $g_0$, such that
\begin{equation}
\label{interpolation}
\eta_{w, n}(\mathscr{L}^\text{an}_{S_0}(r_{g}\otimes\psi_t)) =  \Gamma(n) E_p(n, \eta_w)\cdot G(\eta_{1-n-t}\eta_w^{-1})
\frac{L_{S_0}(\psi_{t+n-1}\eta_w, n)}{\pi^n\Omega_g^{\op{can}}}.
\end{equation}
\begin{Remark} This formula is adapted from \cite{lz16}, Theorem 2.3.2, which deals with the primitive L-function 
in the case of good reduction.
We have
transposed the formula to the imprimitive and semistable 
case (the formula is the same), and suppressed various factors of $2$ and $i$, which depend only on $k$ 
and not on $n$. We have further simplified the formula by virtue of our assumptions that $n$ is odd and the character $\psi_t$
even.  We have not
given the interpolation formulae for the case when $\eta$  is trivial, as we do not need them. 
Our formula is written in terms of the canonical period, rather than the Petersson norm which appears in \cite{lz16};
the relationship between the two periods is given in equation \eqref{canonical period def} below.  Furthermore, 
we have adjusted the action of the Iwasawa algebra 
to match the conventions of \cite{Gre89}, since we are working with Greenberg's Selmer groups.  


We remark also 
that the congruence ideal which appears in the statement of the Main
Conjecture in  Section 2 of \cite{lz16} does not play a role for us, since our definition of the period will subsume this factor. 
Finally, we point out that in the convention of \cite{lz16}, $\Lambda$ refers to the completed
group algebra of $\gal(\Q(\mu_{p^\infty})/\Q)\cong\Z_p^\times$, whereas we have taken $\Lambda$ to be the group algebra
of $\gal(\Q_{\op{cyc}}/\Q)\cong \Z_p\cong 1+p\Z_p$, since the latter formulation is more convenient for
congruences. However, the case of nontrivial even tame cyclotomic twists is incorporated
by choosing a tame twist $\psi_t$, for each even $t$.

The term ``imprimitive'' is used in \cite{lz16} to refer 
to $D_g(\chi, s)$.  Our $D_f(\chi, s)$ 
is even less primitive than  the already defective
 $D_g(\chi, s)$. The function $D_f(\chi, s)$ does not appear in \cite{lz16}. 
\end{Remark}

For the present, we fix $S_0$ sufficiently large, and focus on $L_{S_0}(r_g\otimes\chi, s) = D_f(\chi, s)$, for $\chi=\psi\eta =\psi_t\eta_w$, 
with $\psi, \eta$ even, $\eta$ of $p$-power conductor, and $\psi$ of conductor prime to $p$,
and describe how to prove congruences for the quantity appearing on the right hand side of (\ref{interpolation}). 
The starting point is Shimura's formula expressing 
 $D_f(\chi, s)$ in terms of the Petersson inner product of a theta series $\theta$ and an Eisenstein series $\Phi^\epsilon$:
   \begin{equation}
  \label{shimura-formula-intro}
  (4\pi)^{-s/2}\Gamma(s/2) D_f(\chi, s) = \int_{B_0(N_\chi)} f(z), \overline{\theta_{\overline\chi}(z)\Phi^\epsilon(z,\overline\chi, s) } y^{k-2} dx dy
  \end{equation}
 where $z=x+iy$ runs over a fundamental domain $B_0(N_\chi)$ for the group $\Gamma_0(N_\chi)$. Here $N_\chi$ is the least
 common multiple of $4$, $N$, and the conductor of $\chi$. The superscript of $\epsilon$ on the Eisenstein series indicates a dependence
 on the character $\epsilon$ of the cusp form $f$.   If $n$ is an odd integer in the range
$1\leq n\leq k-1$,
Shimura in \cite{shimura-holo} has shown that $\theta_{\overline\chi}(z)\Phi^\epsilon(z,\overline\chi, n)$ is a nearly holomorphic modular form of level
$N_\chi$, weight $k$, and character $\epsilon$.  For the precise definitions of the theta function and Eisenstein series, see Section \ref{shimura formula} below. 

 \begin{Remark}
The integral in Shimura's formula is taken over a fundamental domain for $\Gamma_0(N_\chi)$.
The modular forms $f$ and $\theta_{\overline\chi}(z)\Phi(z,\overline\chi, s)$ both have nontrivial nebentype character,
but the integrand as a whole is invariant under $\Gamma_0(N_\chi)$. 
  \end{Remark}

 Our first job is to prove that the values on the right hand side of Shimura's formula
are algebraic and integral, when divided by the canonical period. The algebraicity is well-known, and was basically 
proven by Shimura himself:
it follows from the fact that the Fourier coefficients of $\theta_{\overline\chi}(z)\Phi^\epsilon(z,\overline\chi, n)$
can be calculated explicitly, and turn out to be algebraic. Then one 
has to replace the nearly holomorphic form $\theta_{\overline\chi}(z)\Phi^\epsilon(z,\overline\chi, n)$ with its holomorphic projection
$\theta_{\overline\chi}(z)\Phi^\epsilon(z,\overline\chi, n)^\text{hol}$. This projection will
have algebraic Fourier coefficients, so Shimura's method shows
that the Petersson product $\langle f, \theta_{\overline\chi}(z)\Phi^\epsilon(z,\overline\chi, n)^\text{hol}\rangle$ is equal to the
the Petersson inner product of $f$ with itself, up to an algebraic factor. 
This procedure is elaborated by Schmidt in \cite{schmidt86} and \cite{schmidt88},  in the case that $S_0$ is the empty set, 
so that $f=g$ and $N=M$. His work leads to the construction of the $p$-adic 
L-functions described above. The period that emerges is simply the Petersson inner product of  the $p$-stabilized newform 
$g$ with itself. 

Once one has a $p$-adic L-function for $S_0=\emptyset$, it is a straightforward matter to obtain
the $p$-adic L-function for general $S_0$: one has only to multiply by the finitely many 
Euler factors. However, this approach is useless for studying congruences, since one has no way to 
compare the constructions for 
forms of different minimal levels. One could simply multiply each one by an arbitrary unit factor, and 
the resulting objects
would not be congruent. Thus one has to construct both the L- functions simultaneously, 
at some fixed common level where 
suitably depleted forms co-exist. 
In other words, one has to verify that Schmidt's construction works for $D_f(\chi, s)$, when $S_0\neq\emptyset$;  this is
carried out in Section 2 below.  Here, the problem is that we have to work with the depleted form $f$, rather than the stabilized newform
$g$, so multiplicity one is not automatic.
Furthermore, we have to make sure our construction works over an integer ring, rather over a field. 

We describe the solution to the integrality problem first. 
The main difficulty is
that the holomorphic projection in Shimura and Schmidt introduces
denominators dividing $k!$. To get around this, we have to change tactics, and use methods from $p$-adic
modular forms -- we replace the holomorphic projection with the ordinary projection, which is denominator-free. 
This is enough for our purposes, since we are dealing with $f$ ordinary. 

Furthermore, one has to give an integral definition for the periods, rathen than just the Petersson product. 
The key idea (due to Hida) is that the Petersson inner product is related to a certain algebraic, and even integral, inner
product, up to canonical scalar multiple. Let  $M$ denote the level of the $p$-stabilized newform $g$.
let $S_k(M,\cO)$ be the space of cusp forms of weight $k$ on the group $\Gamma_1(M)$, with coefficients in $\cO$, 
and let $\bfT$ be the ring generated by Hecke operators acting on $S_k(M,\cO)$. Let $\mathcal{P}_f$ be the kernel 
of the map $\bfT\rightarrow \cO$ associated to $f$ and $\mathfrak{m}$ the unique maximal ideal of $\bfT$ 
generated by $\mathcal{P}_f$ and $\p$. We have assumed that the residual representation associated to $f$ is 
absolutely irreducible, ordinary, and $p$-distinguished, so it follows that $\bfT_{\mathfrak{m}}$ is Gorenstein. 
This induces an algebraic and integral duality pairing 
\[
(\;\cdot, \cdot)_N: S_k(M, \cO)_\mathfrak{m}\times S_k(M,\cO)_\mathfrak{m} \rightarrow \cO,\]
see \eqref{integral-pairing} and the discussion preceding it for more details.
Following Hida, we
 use the algebraic pairing defined to replace the usual Petersson inner product. To compare the two,
 we define a modified 
 Petersson product on $S_k(M, \mathbf{C})$ by setting
\begin{equation}
\label{modified-petersson-2}
\{v, w\}_{M} = \langle v\vert W_{M}, w^c\rangle_{M}
\end{equation}
where the pairing on the right is the Petersson product. The superscript $c$ denotes complex conjugation
on the Fourier coefficients, and $W_{M}$ is the Atkin-Lehner-Li involution. It is then shown that the two pairings are essentially scalar 
multiples of each other, thus $\{g , g\}_{M}=\Omega_M (g, g)_M$, where  $\Omega_M = \frac{\{g , g\}_M}{(g, g)_M}$ is well-defined
up to unit factor.  The number
$(g, g)_M$ is the so-called congruence number for $f$. We shall compute the number $\Omega_M$ more explicitly in Section 2 below, and
the computation leads to the following definition of the canonical period: we put
\begin{equation}
\label{canonical period def}
\Omega_{g_0}^{\op{can}}  = \frac{\langle g_0, g_0\rangle_{M_0}}{(g_0, g_0)_{M_0}} = \op{unit}\cdot p^{k/2-1}\cdot  \Omega_M
\end{equation}
where $g_0$ is the newform of level $M_0$ associated to $g$, so that $M_0= M$ or $M/p$, and 
$$\langle g_0, g_0\rangle_{M_0}= \int_{B_1(M_0)} |g_0(z)|^2y^{k-2} dxdy,$$
for $z= x+iy$ running over a fundamental domain $B_1(M_0)$ for $\Gamma_1(M_0)$. The number $(g_0, g_0)_{M_0}$ is a certain
congruence number associated to the newform $g_0$. The period $\Omega_g^{\op{can}}$ is defined up
to a $\p$-adic unit factor, coming from the choice of the Gorenstein isomorphism, which is fixed once and for all.

It remains to explain how to modify Schmidt's construction
to construct $p$-adic L-functions from depleted forms rather than newforms.
Some care is required, since Schmidt
relies crucially on some kind of semi-simplicity and multiplicity one result, in order to compute certain Petersson products which arise.  
In the depleted case, the Hecke algebra is not semi-simple, and it is unclear how to compute the Petersson products,
or even to show that they are nonzero. We are able to resolve
this problem because depletion at $q$ increases the level by at most $q^2$, and in fact the level doesn't increase at all unless $g$ is ordinary or unramified. 
This fact allows us to restrict attention to a 1-dimensional space cut out by Hecke operators.
 For example, if we deplete at an unramified prime $q\nmid M$, where $M$ is the level
of the $p$-stabilized newform $g$, then $U_q$  at level $N=Mq^2$ can have at most 3 different
eigenvalues: $\alpha_q, \beta_q, 0$, with $\alpha_q\beta_q =\epsilon(q)q^{k-1}$,  on any form obtained from $f$ by degeneracy maps.
Thus there is a unique $g$-isotypic line at level $Mq^2$ where $U_q=0$, and the depleted form $f$ is chosen to lie in exactly this 1-dimensional space. 

To deal with congruences, we actually need a bit more.
Indeed, for the purposes of relating the Iwasawa invariants of the $p$-congruent forms $g_1$ and $g_2$, we must
 show that we can simultaneously add primes to the level to obtain 
 imprimitive modular forms $f_1$ and $f_2$ of the \emph{same} level for which all Fourier coefficients are 
 $\p$-congruent, and for which all $U_q$ eigenvalues are zero, and for which we can retain some version
 of semisimplicity for the Hecke action. 
 The fact that the level so obtained 
 satisfies all our conditions is an application of a result due to Goueva,
(Lemma \ref{gouvea90} below). Once we have semisimplicity, and a perfect  integral pairing is shown to exist at a fixed
common level, it is a simple matter to show
 that normalized special values of $p$-adic $L$-functions associated to  $r_{g_1}\otimes\psi_t$ and $r_{g_2}\otimes\psi_t$ 
 are $\p$-congruent, see Theorem \ref{special values congruence}. The hard part of the theorem
 is showing that a suitable algebraic pairing exists at some common choice of level, and for this Gouvea's result is indispensable.

Finally, we mention a further -- and more stubborn -- point that arises in dealing with imprimitive forms. One would
like to compare the periods of the depleted form $f$ at level $N$ 
with the canonical periods of the newform $g_0$ at level $M_0$, since it is the imprimitive forms and imprimitive periods that show
up when dealing with congruences. 
Relating these periods
requires us to assume that a certain version of Ihara's lemma is satisfied, see Hypothesis 
\ref{hypothesis ihara}. Cases of the result are known unconditionally when $k=2$, but it is not yet resolved in all
the higher weight cases. We are therefore required to carry around  Hypothesis \ref{hypothesis ihara}.
If the reader is willing to allow the periods to depend on the level, then our results become unconditional.
We mention also that the results in this paper could be generalized to cover the remaining critical values, and odd
characters $\psi$, at the cost of additional calculation of Fourier expansions of theta functions, sometimes of weight $3/2$
and of slightly different Eisenstein series.

\par 
On the algebraic side, there is little difficulty. The results on Galois cohomology
in \cite{GV00} are quite general,
and apply to the situation treated here, so require little more than translation. 
The key condition is that the common reduction modulo $\p$ of the representations $r_{g_i}\otimes\psi_t$ 
 does not contain any non-zero  submodule on which the action of $\gal(\qbar/\Q)$ is trivial. This hypothesis
 is ensured by the condition that $(\psi\epsilon_i)^2\neq 1\mod{\pp}$.

In conclusion, we mention that the methods of this paper have been used by Delbourgo to prove congruences for Rankin-Selberg L-functions. We refer the reader to his forthcoming article for an interesting application.

\section*{Acknowledgments}
The authors would like to thank Haruzo Hida, Antonio Lei, Giovanni Rosso and Eric Urban for helpful comments. 
We are also grateful to the referees of this article for numerous insightful suggestions, and for the reference to \cite{gouvea90}.
The second  author gratefully acknowledges support from NSERC Discovery grant 2019-03987. The third  author was supported by the NSERC Discovery grant 2019-03929.

\section{Congruences for symmetric square L-functions}

\subsection{Definitions and normalizations}
\label{assumptions-and-definitions} 
\subsubsection*{Petersson inner product} Consider modular forms $v, w$ of weight $k\geq 2$
on $\Gamma_1(m)$ (at least one of which is cuspidal). Then the Petersson inner product
of $v$ and $w$ is defined via 
\begin{equation}
\label{standard-petersson}
\langle v, w\rangle_m = \int_{B_1(m)}v(z)\overline{w(z)} y^{k-2} dx dy,
\end{equation} 
where $B_1(m)$ is a fundamental domain for $\Gamma_1(m)$.
Since we will be comparing to the work of Schmidt and Shimura, it is important to record that their normalizations are
different: they write the Petersson product
 as an integral of $v\overline{w}$ on a fundamental domain for $\Gamma_0(m)$.
 
\subsubsection*{Slash operator and Hecke operators}
We follow the classical
conventions of \cite{miy89}, Chapter 4, 
when speaking of Hecke operators acting on modular forms. We refer also to \cite{ali}.
Let $m$ be any positive integer, which may be divisible by our fixed prime $p$. 
For  a prime $q\nmid m$ (resp. $q\mid m$, including possibly $q=p$) the Hecke operator $q^{1-k/2}T_q$ (resp. $q^{1-k/2}U_q$, ) of level $m$ corresponds
to the \emph{right} action of the double coset 
$\Gamma_1(m)\begin{pmatrix} 1 & 0 \\ 0 & q\end{pmatrix}\Gamma_1(m)$, acting via the slash operator,
normalized so that $(f\vert_k\gamma)(z) =\det(\gamma)^{k/2}(cz+d)^{-k}f((az+b)/cz+d)), \gamma =\begin{pmatrix}a & b \\ c& d\end{pmatrix}$.

The Hecke operators $T_q$ are normal with respect to the Petersson inner-product on $\Gamma_1(m)$, but the operators
$U_q$ are not. Write $W_m$ for
the operator on modular forms of level $m$ induced
by the action of the matrix $\begin{pmatrix} 0 & 1 \\  -m & 0\end{pmatrix}$.  Note that the matrix $W_m$
normalizes $\Gamma_1(m)$, and recall that $W_m^2$ acts via the scalar $(-1)^k$. Furthermore, the
adjoint of $U_q$ acting on cuspforms of level $m$ is the operator $U_q^*$ given by $W_m^{-1}U_q W_m$. 
Observe that the adjoint of $U_q$ depends on the level, although $U_q$ itself does not. 

Since we are working on 
$\Gamma_1(m)$, we must also specify additional operators giving the character: if $(n, m)=1$, then we have an operator
$S_n$ defined by $f\vert S_n = f\vert_k\sigma_n$, and $\sigma_n\equiv \begin{pmatrix} n^{-1}& *\\ 0& n\end{pmatrix}\pmod{m}$. 
We have not used the usual diamond notation for these operators, since that notation
is reserved for the Petersson product below.

\subsubsection{Atkin-Lehner-Li operators}
\label{atkinlehner}
 Let $m$ denote any positive integer, and let $m=QQ'$ with $Q, Q'\geq 0$, 
be a factorization such that $(Q, Q')=1$. 
We let $W_{Q}$ and $W_{Q'}$ denote the Atkin-Lehner-Li operators defined 
by Atkin-Li in \cite{ali}, top of page 223, acting on the modular forms for the group $\Gamma_1(m)$ . 
Explicitly, the operator $W_Q$ is given by the action of any matrix $\gamma = \begin{pmatrix} Qx & y\\Nz &Qw
\end{pmatrix}$, where $x, y, z, w\in \mathbb{Z}$, and $y\equiv 1\pmod{Q}, x\equiv 1\pmod{Q'}$, and
$\det(\gamma)=Q$. 
The operator $W_m$ defined above corresponds to the factorization $m= m\cdot 1$. 
Suppose that $F$ has nebentype character $\epsilon$ mod $m$, and write $\epsilon=\epsilon_Q\cdot \epsilon_{Q'}$, 
where $\epsilon_Q$ (resp. $\epsilon_{Q'}$) is a character defined mod $Q$ (resp. defined mod $Q'$). 
Then by \cite{ali}, Proposition 1.4, we have 
$W_m = \epsilon_{Q'}(Q)\cdot W_{Q}\circ W_{Q'}$. In particular, $W_Q$ may not commute with 
$W_{Q'}$.  

The operator $W_Q$ depends on the level $m$, not just on the index $Q$,
 although it is not customary to put that in the notation. We will put some indication of the level if it is required.
 However, for $Q, Q'$ as above, the matrix written above which defines $W_{Q}$
 at level $m$  also satisfies the definition of the matrix giving $W_{Q}$ at level $Q$ corresponding to $Q = Q\cdot 1$.
 Thus, if we view the forms of level $Q$ as a subspace of the forms of level $m$, via the natural inclusion
 of functions on the upper half plane, the action of the operator
 $W_{Q}$ of level $Q$ coincides with the action of $W_{Q}$ at level $m$.
  We will use this fact later. Note also that the eigenvalues of the $W$ operators on a given form $f$ may not be contained in the 
   ring generated by the Fourier coefficients of $f$; thus we enlarge the coefficient ring to contain these eigenvalues if required.

\subsection{Modular forms and their depletions}
\label{depletion definition}
With the definitions in hand, we return to the situation of $p$-stabilized newforms, as in the introduction. 
In this section, we will be dealing exclusively with the construction and integrality of $p$-adic
L-functions for a single modular form, so we simply fix some $g=g_i$.  Write $M$ for the level
of $g$, so that $M=M_0p$, where $p\nmid M_0$.
Let $\epsilon$ denote the character of $g$, and denote the newform associated to $g$ by $g_0$,
so that $g_0$ has level 
$M_0$ or $M_0p$. 

\par If $z$ denotes a variable in the upper
half plane, write the Fourier expansion of $g$ as $g(z)=\sum a(n,g)e^{2\pi in z}$. 
 Let $S_0$ be a finite set of primes $q\neq p$.
 Define the modular form 
 \[f(z) :=\sum_{(n,S_0)=1}a(n,g) e^{2\pi inz},\] where the sum is restricted to indices $n$ that 
 are indivisible by each prime in $S_0$. 
 Then the $L$-function $L(f, s) = \sum a(n,f) n^{-s}$ of $f$ has the formal Euler product expansion
\[L(f, s) = \prod_q(1-\alpha_q' q^{-s})^{-1}(1-\beta_q' q^{-s})^{-1},\]where the product is taken over all prime numbers $q$, and $\alpha_q', \beta_q'$ are certain complex numbers. The basic
properties of $f$ are well-known, and we summarize them as a lemma.

\begin{Lemma}
\label{depletion lemma}
The modular form $f$ has level
$N$, where $\ord_q(N)=\ord_q(M)$ for all $q\notin S_0$. If $q\in S_0$, then
 $\ord_q(N)=\ord_q(M)+1$ if $q$ is ordinary for $g$.  If $q\in S_0$ is unramified
 for $g$, then
 $\ord_q(N)=\ord_q(M)+2$. Finally, if $q\in S_0$ is depleted for $g$, then
$\ord_q(N)=\ord_q(M)$.
 
Furthermore, the following statements hold:
\begin{enumerate}
\item $N=pN_0$, where $(N_0, p)=1$;
\item $4\vert N$ if $2\in S_0$; 
\item $f$ is an eigenvector
of the Hecke operators $T_q$ for $q\nmid N$ and for $U_q$ when $q\vert N$;
\item the eigenvalue $\alpha_p$ of the $U_p$ operator  on $f$ is a $\mathfrak{p}$-adic unit;
\item  the eigenvalue of $U_q$ on $f$ is zero, for $q\mid N, q\neq p$; and
\item $\alpha_q'=\beta_q'=0$ for all $q\mid N, q\neq p$. 
\end{enumerate}
\end{Lemma}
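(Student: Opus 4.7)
The plan is to write $f$ explicitly using depletion operators and then analyze its level. Let $V_q\colon S_k(\Gamma_1(m)) \to S_k(\Gamma_1(mq))$ denote the level-raising map and $U_q$ the usual Hecke operator at $q$; on $q$-expansions $V_q$ sends $\sum a(n) e^{2\pi inz}$ to $\sum a(n) e^{2\pi inqz}$ while $U_q$ sends it to $\sum a(nq) e^{2\pi inz}$. Then $(1 - V_q U_q)$ implements depletion at $q$, and since these operators commute across distinct primes,
\begin{equation*}
f \;=\; \prod_{q \in S_0}(1 - V_q U_q)\, g.
\end{equation*}
This exhibits $f$ as a cusp form and provides an upper bound on its level.

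To pin down the level exactly, I would argue prime by prime. For $q \notin S_0$ the $q$-part of the level is untouched, giving $\ord_q(N) = \ord_q(M)$. For $q \in S_0$ there are three cases matching the local behavior of $\rho_g$: (i) if $q$ is unramified ($q \nmid M$), then $U_q g$ lives at level $Mq$ and $V_q$ raises it to $Mq^2$; (ii) if $q$ is ordinary ($q \| M$), then $U_q$ preserves level $M$ and $V_q$ raises it to $Mq$; (iii) if $q$ is depleted ($q^2 \mid M$), the Atkin--Lehner vanishing $a(q, g_0) = 0$ combined with $g_0$ being a $U_q$-eigenform at level $M_0$ gives $U_q g = 0$, so $(1 - V_q U_q) g = g$ and the level is unchanged. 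Conclusions (1) and (2) follow at once: $p \notin S_0$ fixes $\ord_p(N) = 1$, so $N = p N_0$ with $(N_0, p) = 1$; and if $2 \in S_0$, each of the three cases forces $\ord_2(N) \geq 2$.

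The eigenvalue assertions (3)--(6) reduce to the Fourier identity $a(n, f) = a(n, g) \cdot \mathbf{1}_{(n, S_0)=1}$. For $q \nmid N$ (so necessarily $q \notin S_0$ and $q \nmid M$), the multiplicativity of $a(n, g)$ inherited by $f$ on indices coprime to $S_0$ yields $T_q f = a(q, g) f$. For $q = p$, since $p \notin S_0$ and $U_p g = \alpha_p g$ by $p$-stabilization, the same inheritance gives $U_p f = \alpha_p f$ with $\alpha_p$ a $\mathfrak p$-adic unit. For $q \in S_0$ every coefficient $a(nq, f)$ vanishes, so $U_q f = 0$ and the local Euler factor collapses to $1$, giving $\alpha_q' = \beta_q' = 0$.

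The hardest part is the lower-bound portion of the level analysis in cases (i) and (ii) -- proving that $f$ cannot live on any proper divisor of the claimed level -- for which I would invoke Atkin--Lehner--Li newform theory. One must show that the vanishing of the Fourier coefficients at $q$-powers for $q \in S_0$ forces the local conductor at $q$ to be precisely as stated, rather than smaller. The depleted case, the upper bounds in the other cases, and the eigenvalue computations are then formal consequences of the operator identity.
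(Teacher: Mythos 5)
The paper itself gives no proof of this lemma at all --- it is stated as a summary of ``well-known'' facts about the depleted form --- so there is no argument of the authors to compare against; your write-up supplies the standard one, and it is essentially correct. The identity $f=\prod_{q\in S_0}(1-V_qU_q)\,g$, the resulting upper bounds on the level, and the eigenvalue statements for $q\nmid N$, for $q=p$, and for $q\in S_0$ all go through as you say, and you correctly isolate the exact-level (lower-bound) assertion as the only non-formal step: it is indeed Atkin--Lehner--Li theory, concretely the fact that for the newform $g_0$ the forms $g_0(z)$, $g_0(qz)$, $g_0(q^2z)$ are linearly independent and $g_0(qz)$, $g_0(q^2z)$ have exact levels $M_0q$, $M_0q^2$, so a combination with nonvanishing top component cannot descend.

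Two points need repair. First, your parenthetical characterizations are wrong in general: ``ordinary at $q$'' in this paper means $\dim T^{I_q}=1$ (equivalently exactly one of $\alpha_q,\beta_q$ is nonzero) and does not force $q\Vert M$ (a ramified principal series with one unramified character can have $\ord_q(M)\geq 2$), and conversely $q^2\mid M$ does not imply depleted. What your argument actually uses --- and all it needs --- is the trichotomy $q\nmid M$, $a(q,g)\neq 0$, $a(q,g)=0$, which is exactly the paper's unramified/ordinary/depleted trichotomy; phrase it that way. Second, you never treat primes $q\mid M$ with $q\neq p$, $q\notin S_0$: for statement (3) the same commutation argument as at $p$ gives $U_qf=a(q,g)f$, but observe that for such a prime statements (5)--(6) as literally written hold only when $q$ is depleted for $g$ (otherwise $U_qf=\alpha_qf\neq 0$). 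That is an imprecision in the lemma's statement rather than in your proof --- in the paper's later use $S_0$ contains all primes $q\neq p$ dividing the levels --- but it should be flagged rather than silently restricting (5)--(6) to $q\in S_0$.
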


 

\subsection{The naive symmetric square, and the Petersson product formula} 

We keep the notations for the modular form $g$ and its depletion $f$,
as in the  previous 
subsection. Consider a fixed character $\psi$, of conductor $c_\psi$ relatively prime to $p$. 
Let $\chi$ be a Dirichlet character of the form $\psi\eta$, 
where $\eta$ has $p$-power conductor. Then the naive  $\chi$-twisted symmetric square $L$-function of $f$  is given by:
\begin{equation}
\label{naive-product}
D_f(\chi, s) = \prod_{q} \left( (1-\chi(q)\alpha_q'\beta_q' q^{-s})(1-\chi(q)\beta_q'^2q^{-s})(1-\chi(q)\alpha_q'^2q^{-s})\right)^{-1}.
\end{equation}
The product is taken over all primes $q$, with $\chi(q)=0$ for $q\vert c_\chi =c_\psi p^r$, for $r\geq 0$. It converges in a suitable 
half-plane, and admits a meromorphic continuation to all $s$. 
Let  $G({\chi})$ denote the Gauss sum of $\chi$. 
Then the quantity
\begin{equation}
D_f(\chi, s)^\text{alg} = \frac{D_f(\chi, s)}{\pi^{k-1}\langle f, f\rangle_N}\cdot \frac{G(\overline{\chi})}{(2\pi i)^{s-k+1}}
\end{equation} is algebraic when $s=n$ is an integer in the range
$1\leq n\leq k-1$
satisfying $(-1)^n=-\chi(-1)$. This is well-known, see \cite[Theorem 2.2.3]{lz16}, for the present formulation; the result goes back to
Shimura, whose method was elaborated by Schmidt \cite{schmidt86}, \cite{schmidt88},  and Sturm \cite{sturm}.
In this situation we say that $n$ is critical.
We remark that the functional equation for the primitive symmetric square L-function leads to similar algebraicity results
for $D_f(\chi, s)^{\op{alg}}$ for integer values of $s$ in the range $k\leq s\leq 2k-2$; we will not need these results here.
Note that the algebraic quantities above are not necessarily integral.  
We make the following assumption, to keep the notation and book-keeping simple:

\begin{itemize}
\item we have $\chi=\psi \eta$, where $\eta$ is a \emph{nontrivial} even character of $p$-power conductor, and
\item $s=n$ is an odd integer with $1\leq n \leq k-1$ in the algebraicity formula above. 
\end{itemize}

\begin{Remark} The first assumption above would be problematic if one were trying to \emph{construct} a $p$-adic L-function,
since in that case one must consider the trivial character.
 However, we are simply trying to prove congruences for a $p$-adic L-function that is known to exist, so it suffices to demonstrate
 the congruences for almost all characters. 
\end{Remark}

\subsection{Explicit formulae for L-values}
\label{shimura formula}
The explicit formulae we will need  for congruences originate in \cite{shimura-holo}. 
We have elected to follow  the treatment in \cite{schmidt86}, with some improvements, 
since it is relatively easy to compare the formulae given there to those
originally given by Shimura, whose work remains the basic reference. But before giving Shimura's
formula, we need to clear up an important technical point. Shimura considers the convolution
of $f$ with a modular form of half-integral weight, whose level is therefore divisible by $4$. It will be convenient
for the computations to insist that the level $N$ of $f$ is also divisible by $4$, and we can achieve this by requiring
$2\in S_0$. Thus, until further notice, we assume that 
\begin{itemize}
\item The set $S_0$ contains $q=2$.
\end{itemize}
We will remove this condition on $S_0$ at the very end of the argument, when we give the proofs of the theorems stated
in the introduction. Thus, define
\begin{equation}\theta_\chi(z) = \frac{1}{2}\sum_{j\in\mathbb{Z}} \chi(j) \text{exp}(2\pi i j^2 z), 
\end{equation} which is a modular form of weight $1/2$ and level $4c_\chi^2$. 
Further, let $$\Phi^\epsilon(z, \chi, s) = L_{N_\chi}(\chi^2, 2s+2-2k)E(z, s+2-2k, 1-2k, \omega)$$ denote the 
Eisenstein series as defined in \cite[p.210]{schmidt86}.  We reproduce the formula here:
$$E(z, s, \lambda, \omega) = y^{s/2}\sum_\gamma \omega(d_\gamma)j(\gamma, z)^\lambda|j(\gamma, z)|^{-2s},$$
where $z, s, \lambda\in\mathbb{C}$, and $\omega$ is the Dirichlet character given by  
\begin{equation}
\label{omega}
\omega(d) = \epsilon(d) \chi(d) \left(\frac{-1}{d}\right)^k
\end{equation}
for $(d, 4Nc_\chi)=1$. The sum is taken over $\gamma\in\Gamma_\infty\backslash\Gamma_0(N_\chi)$,
where  $N_\chi:=\text{lcm}(N, 4c_\chi^2)$.
The subgroup $\Gamma_\infty$ is the set of matrices of the form $\pm \begin{pmatrix}1& n \\0 & 1\end{pmatrix}, 
n\in\Z$, and $d_\gamma$ denotes the bottom right entry of the matrix $\gamma$.

The Eisenstein
series is a real analytic modular form of weight $k-1/2$, and $\theta_\chi(z)\Phi^\epsilon(z, \chi, s)$ is a (non-holomorphic) modular form of weight $k$,  character $\epsilon$, and level 
 $N_\chi$. 
 Recall that the level $N$ of $f$ is divisible by precisely the first power of $p$, since $f$ is assumed to be $\mathfrak{p}$-stabilized.
We shall write $c_\eta=p^{m_\chi}=p^{m_\eta}$ for the $p$-part of the level of $\chi=\psi\eta$. Since $\eta\neq 1$
we have $m_\chi\neq 0$ and $N_\chi=
\op{lcm}(N, 4c_\psi^2p^{2m_\chi})=p^{2m_\chi}\op{lcm}(N, 4c_\psi^2)$.

Consider an odd integer $n$ in the range
$1\leq n\leq k-1$. Shimura has
 shown that $\theta_{\overline\chi}(z)\Phi^\epsilon(z,\overline\chi, n)$ is a nearly holomorphic modular form of level
$N_\chi$, weight $k$, and trivial character. In fact, one has the following formula (see equation (1.5) in \cite{shimura-holo}):
\begin{equation}\label{sturm relation}
(4\pi)^{-s/2}\Gamma(s/2) D_f(\chi, s) = \frac{1}{\phi(N_\chi)}\langle f, \theta_{\overline\chi}(z)\Phi^\epsilon(z,\overline\chi, s)\rangle_{N_\chi}.
\end{equation}

Here the inner product is taken as an integral over a fundamental domain for $\Gamma_1(N_\chi)$. 
The factor of $\frac{1}{\phi(N_\chi)}$ is inserted to match Shimura's integral
over a fundamental domain for $\Gamma_0(N_\chi)$. 

\subsection{Trace computations and the Petersson product formula at level $N$}

For the purpose of proving congruences, it is vital to work at the fixed level $N$. However, observe that
the form $\theta_{\overline\chi}(z)\Phi^\epsilon(z,\overline\chi, n)$ satisfies a transformation property with respect to the smaller group 
$\Gamma_0(N_\chi)$, which varies with $\chi$.  
Our goal is to bring $\theta_{\overline\chi}(z)\Phi^\epsilon(z,\overline\chi, n)$ down to level $N$ by taking a trace, and eventually verify
that we can retain
control of integrality of the Fourier coefficients. In this section we define and compute the trace, and
prepare for the discussion of holomorphy and integrality in the next section. 

Let $N_\psi=\op{lcm}(N, 4c_\psi^2)$. The level $N$ of $f$ and the level $N_\chi=p^{2m_\chi}N_\psi$ of 
$\theta_{\overline\chi}(z)\Phi^\epsilon(z,\overline\chi, n)$ 
differ only by a power of $p$, and other primes $q\vert c_\psi$.  It may happen that the primes dividing $c_\psi$ also divide $N$.

We start by dealing with the powers of $p$. In this we reproduce the method of  Schmidt \cite{schmidt86}, \cite{schmidt88}, 
with a few improvements. 
Let $T_\chi$ denote the trace
operator that takes modular forms on $\Gamma_1(N_\chi)$ down to $\Gamma_1(N_\psi)$. 
The formula for the trace on $\Gamma_1$ may be deduced from the formulae on pages 68-69 of Ohta \cite{ohta95}, especially (2.3.1) on
the top of page 69, where coset representatives 
for $\Gamma_1(N_\psi p^{r+1})$ to $\Gamma_1(N_\psi p^{r})$ are computed, for any $r\geq 1$. Ohta's notation is different
from ours -- his $N$ is a number prime to $p$, which corresponds to our $N_\psi/p$. Let $F$ denote a modular form 
on $\Gamma_1(N_\chi)$. Assume that the character of $F$ has conductor dividing $N_\psi$. In the following pages,
we will
use the symbol $\circ$ to denote the action of operators on the right
(instead of the usual slash) because the formulae are somewhat messy, and
 $\circ$ renders the various compositions of operators more readable.

With this convention, Ohta's formulae (2.3.2) and (2.3.3) \emph{op. cit.} 
yield $$p^{k/2-2} F\circ \op{Tr}^{\Gamma_1(N_\psi p^r)}_{\Gamma_1(N_\psi p^{r-1})}  = F\circ W_{N_\psi p^r}\circ U_p \circ W_{N_\psi p^{r-1}}^{-1}.$$
Here we have used the fact that the nebentype character is defined modulo $N_\psi$ to conclude that the $p$ matrices denoted by $\sigma_\alpha$
in Ohta act trivially, together with the definition of $U_p$ and the slash operator in Section \ref{assumptions-and-definitions}.  Iterating this relationship yields the following formula, which is valid for any $F$ as above:
\begin{equation}
\label{annoying formula}
p^{(2m_\chi-1)(k/2-2)} F\circ T_\chi\circ W_{N_\psi}=
F\circ W_{N_\chi}\circ U_p^{2m_\chi -1}.
\end{equation}
We remark that $T_\chi$  is defined purely in terms of matrices, and can be applied to the non-holomorphic form 
$\theta_{\overline\chi}(z)\Phi^\epsilon(z,\overline\chi, n)$,
and that the latter has character $\epsilon$, which is defined modulo $N$.

Thus we may apply our formula to $F= \theta_{\overline\chi}(z)\Phi^\epsilon(z,\overline\chi, n)$. As a result, we find that 
$\theta_{\overline\chi}(z)\Phi^\epsilon(z,\overline\chi, n)\circ W_{N_\chi}\circ U_p^{2m_\chi-1}$ is of level $N_\psi$. 
It follows further that for any $m\geq m_\chi$,
that $\theta_{\overline\chi}(z)\Phi^\epsilon(z,\overline\chi, n)\circ W_{N_\chi}\circ U_p^{2m -1}$ is also a modular form of level $N_\psi$, since the $U_p$ operator at level $N_\chi$ is 
given by the same  matrices as $U_p$ at level $N_\psi$, and $U_p$
stabilizes the space of forms of level $N_\psi$. Since \[\theta_{\overline\chi}(z)\Phi^\epsilon(z,\overline\chi, n)\circ W_{N_\chi}\circ U_p^{2m -1}= 
\theta_{\overline\chi}(z)\Phi^\epsilon(z,\overline\chi, n)\circ W_{N_\chi}\circ U_p^{2m_\chi-1} \circ U_p^{2(m-m_\chi)-1},\] 
the equation \eqref{sturm relation} gives
\begin{align*}
\frac{\Gamma(n/2) D_f(\chi,n)}{(4\pi)^{n/2}} = & \frac{1}{\phi(N_\chi)} \langle f,  \theta_{\overline\chi}(z)\Phi^\epsilon(z,\overline\chi, n) \rangle _{N_\chi} \\
=  &  \frac{1}{\phi(N_\chi)} \langle f,  \theta_{\overline\chi}(z)\Phi^\epsilon(z,\overline\chi, n)\circ T_\chi \rangle _{N_\psi}  \\
= &  \frac{1}{\phi(N_\chi)}\langle f\circ W_{N_\psi},  \theta_{\overline\chi}(z)\Phi^\epsilon(z,\overline\chi, n)\circ T_\chi\circ W_{N_\psi}\rangle_{N_\psi}\\
=&  \frac{p^{-(2m_\chi-1)(k/2-1)}}{\phi(N_\psi)}\langle f\circ W_{N_\psi},  \theta_{\overline\chi}(z)\Phi^\epsilon(z,\overline\chi, n) \circ W_{N_\chi}\circ U_p^{2m_\chi -1}\rangle_{N_\psi}.
\end{align*}
Here we have used \eqref{annoying formula}, as well as the factorization $\phi(N_\chi) = \phi(N_\psi)p^{2m_\chi-1}$.

Computing a bit further, and setting $H=\theta_{\overline\chi}(z)\Phi^\epsilon(z,\overline\chi, n)$ for brevity,
we consider some $m\geq m_\chi$, and we find that
\begin{align*}
\langle f\circ W_{N_\psi}, H\circ W_{N_\chi}\circ U_p^{2m-1}\rangle_{N_\psi} 
=  &\langle f\circ W_{N_\psi}, H\circ W_{N_\chi}\circ U_p^{2m_\chi-1} \circ U_p^{2(m-m_\chi)}\rangle_{N_\psi} \\  
= & \langle f \circ W_{N_\psi}\circ (U_p^*)^{2(m-m_\chi)},H\circ W_{N_\chi}\circ U_p^{2m_\chi-1}\rangle_{N_\psi}  \\
= & \langle f\circ U_p^{2(m- m_\chi)}\circ W_{N_\psi}, H\circ W_{N_\chi} \circ U_p^{2m_\chi-1}\rangle _{N_\psi}\\
= & \alpha_p^{2(m-m_\chi)} \langle f\circ W_{N_\psi}, H\circ W_{N_\chi}\circ U_p^{2m_\chi-1}\rangle _{N_\psi}.
\end{align*}

In the second calculation above, we have used the fact that adjoint of $U_p$ at level $N_\psi$ is given by 
$U_p^* = W_{N_\psi}^{-1}\circ U_p\circ W_{N_\psi}$. 
Putting together the strings of equalities above, we conclude that
 \begin{equation}
 \label{petersson-formula-first}
 \begin{split}
&\phi(N_\psi)\frac{\Gamma(n/2)}{(4\pi)^{n/2}}p^{(2m_\chi-1)(k/2-1)} D_f(\chi,n) \\
= & \alpha_p^{2(m_\chi-m)} \langle f\circ W_{N_\psi}, \theta_{\overline\chi}(z)\Phi^\epsilon(z,\overline\chi, n)\circ W_{N_\chi}\circ U_p^{2m-1}\rangle_{N_\psi}
\end{split}
\end{equation}
for any $m\geq m_\chi$. 

\begin{Remark} The factor $\phi(N_\psi)$ in the formula above depends on $\psi$, and may be divisible by additional powers of $p$, coming 
from primes $q\equiv 1\pmod{p}$.
\end{Remark}


We want to replace the inner product in the formula above by an inner product at the fixed level $N$. Thus we have to take
another trace. We warn the reader that the calculation is a bit messy, although more or less elementary. But we need
to do the calculation in detail, as it turns out to be what we need to get rid of the extra powers of $p$ in the 
quantity $\phi(N_\psi)$ that come from primes $q$ with $q\vert c_\psi, q\nmid N$, and $q\equiv 1\pmod{p}$.

Let $N=\prod_{i=1}^s q_{i}^{e_i}$ be
the prime factorization of $N$. Reordering the factors if necessary, suppose that $q_i\nmid 2c_\psi \iff i <  t$. If $i < t$, then clearly $\op{ord}_{q_i}(N_\psi)
=\op{ord}_{q_i}(N)$, because $N_\psi = \op{lcm}(N, 4c_\psi^2)$.
The level $N_\psi=\op{lcm}(N, c_\psi^2)$ differs from $N$ only at the primes $q_i$ with $i\geq t$, where 
$f_i = \op{ord}_{q_i}(N_\psi) \geq e_i = \op{ord}_{q_i}(N)\geq 1$, or at primes which divide $4c_\psi$ which do not divide $N$ at all.

To start with, we want to get rid of the extra powers of the $q_i, i\geq  t$, so we define $N' _\psi= {N_\psi}/(\prod_{i\geq t} q_i^{f_i-e_i})$ and set
 $$U^\psi_1 = U_{q_t}^{f_t-e_t}\circ\dots  U_{q_s}^{f_s-e_s}.$$  
If we set $N_{q_t}= N_{\psi}/q_t^{f_t-e_t}$, then the same 
calculation as in the case $q=p$ treated above shows that, if $F$ is a form of level $N_\psi$ whose nebentype character has conductor
dividing $N$,  we have
$$F\circ \op{Tr}_{q_t}  \circ W_{N_{q_t}}= c_{q_t}F\circ W_{N_\psi}\circ U_{q_t}^{f_t-e_t} .$$ 
Here $c_{q_t}$ is a power of $q_t$, and $\op{Tr}_{q_t}$ is the trace to level $\Gamma_1(N_{q_t})$. 
By construction, we have $\op{ord}_{q_t}(N_{q_t})=\op{ord}_{q_t}(N)$. Repeating this procedure,
we conclude that $ F \circ U^\psi_1$ has level $N_\psi'$. 
Furthermore, it is clear, by induction, that if $G$ is a cuspform of level $N'_\psi$,  and $F$ has character
with conductor dividing $N'_\psi$, that we have
\begin{equation}
\label{whew}
\langle G, F \circ W_{N_\psi}\circ U^{\psi}_1\circ W_{N_\psi'}^{-1}\rangle_{N'_\psi} = a_\psi \langle G, F\rangle_{N_\psi},
\end{equation}
where $a_\psi$ is a $\p$-adic unit.
Let $Q = \prod_{q\vert N} q^{{\op{ord}_q(N_\psi'})}$, and set $Q'=N'_\psi/Q$. 
Then the following lemma holds by construction:
\begin{Lemma}
\label{al lemma}
We have $Q=N$, and a factorization $N_\psi'= QQ'$ where $(Q', N) =1$.
\end{Lemma}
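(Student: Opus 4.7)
The plan is to verify the equality $Q = N$ and the coprimality $(Q', N) = 1$ by a direct prime-by-prime computation of $\op{ord}_q(N_\psi')$. This is essentially bookkeeping dictated by the definitions of $N_\psi$ and $N_\psi'$; no deep input is needed.

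First I would compute $\op{ord}_q(N_\psi')$ for each prime $q \mid N$, splitting into the two cases prescribed by the ordering of the $q_i$. If $i < t$, then by assumption $q_i \nmid 2c_\psi$, so $q_i \nmid 4c_\psi^2$, and since $N_\psi = \op{lcm}(N, 4c_\psi^2)$ we get $\op{ord}_{q_i}(N_\psi) = e_i$. The passage from $N_\psi$ to $N_\psi' = N_\psi / \prod_{j \geq t} q_j^{f_j - e_j}$ only modifies the orders at the primes $q_j$ with $j \geq t$, so $\op{ord}_{q_i}(N_\psi') = e_i$ as well. If $i \geq t$, then $f_i = \op{ord}_{q_i}(N_\psi)$ and by the very definition of $N_\psi'$ we strip off exactly $f_i - e_i$ factors of $q_i$, leaving $\op{ord}_{q_i}(N_\psi') = f_i - (f_i - e_i) = e_i$. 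Combining these two cases, $\op{ord}_{q_i}(N_\psi') = \op{ord}_{q_i}(N)$ for every $i$, and therefore
\[
Q = \prod_{q \mid N} q^{\op{ord}_q(N_\psi')} = \prod_{i=1}^s q_i^{e_i} = N.
\]

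For the coprimality, I would observe that any prime $\ell$ dividing $Q' = N_\psi'/N$ must divide $N_\psi'$, hence $N_\psi = \op{lcm}(N, 4c_\psi^2)$, without dividing $N$. Such an $\ell$ must therefore divide $4c_\psi^2$ and not lie in $\{q_1, \ldots, q_s\}$, so $\gcd(\ell, N) = 1$. Running over all primes dividing $Q'$ gives $(Q', N) = 1$, completing the proof.

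Since each step is a direct consequence of how $N_\psi$ and $N_\psi'$ were constructed, there is no real obstacle beyond keeping the bookkeeping straight between the two ranges $i < t$ and $i \geq t$ and remembering that primes $\ell \mid 4c_\psi$ which do not divide $N$ at all are also permitted in $N_\psi$; such primes contribute to $Q'$ but harmlessly, since they are automatically coprime to $N$.
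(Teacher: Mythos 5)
Your proof is correct and is precisely the prime-by-prime bookkeeping that the paper summarizes by saying the lemma ``holds by construction'': one checks $\op{ord}_{q_i}(N_\psi')=e_i$ in both ranges $i<t$ and $i\geq t$, giving $Q=N$, and notes that the remaining primes in $N_\psi'$ divide $4c_\psi^2$ but not $N$, so $(Q',N)=1$. No difference in approach, and no gaps.
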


Let $W_{Q}$ and $W_{Q'}$ be the Atkin-Lehner-Li operators at level $N_\psi'$.  
As we have already recorded
in Section \ref{atkinlehner}, we have  $W_{N}\circ W_{Q'}=W_{Q}\circ W_{Q'}= \epsilon_{Q'}(Q)W_{Q'Q}=
W_{N_\psi'}$,
since $\epsilon$ is defined modulo $Q =N$, on forms
 with character $\epsilon$ and level $N_\psi'= QQ'$. In the situation at hand where $Q=N$, the operator
 $W_{Q}$ acting on forms of level $N_\psi'$
agrees with $W_{N}$ defined in the same way on the subspace of forms of level $N$.

Now, let $U^\psi_2$ denote the trace operator from level $N_\psi'$ to level $N$.
We define
$\tilde T_\psi: S_k(N_\psi, K) \rightarrow S_k(N, K)$ by 
$$\tilde T_\psi: h\mapsto  h\circ U_1^\psi \circ W_{Q'}^{-1}\circ U_2^\psi.$$ 
The reason for the factor $W_{Q'}^{-1}$ will become clear very shortly.
 The key lemma is the following. 

\begin{Lemma}
\label{tame-trace}
Suppose $F$ is a holomorphic modular form of level $N_\psi$, character $\epsilon$ modulo $N$, and weight $k$,
with $\p$-integral Fourier coefficients. Assume that $\epsilon$ has trivial restriction to $(\zz/p\zz)^\times$,
namely, that $\Gamma$ is modular with respect to $\Gamma_1(a, p)=\Gamma_1(a)\cap\Gamma_0(p)$, for $a=N_\psi/p$.
Then the following statements hold:
\begin{enumerate}
\item $F\circ \tilde T_\psi$ has $\p$-integral Fourier
coefficients, and 
\item The Fourier coefficients of $ F\circ \tilde{T}_\psi $ are divisible by $\prod_{q} (q-1)$, where the product is taken over all primes $q$ dividing
$c_\psi$ which do not divide $N$.
\end{enumerate}
\end{Lemma}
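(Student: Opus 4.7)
The plan is to verify both assertions by pushing Fourier coefficients through the three-fold composition $\tilde T_\psi = U_1^\psi \circ W_{Q'}^{-1} \circ U_2^\psi$ in order, checking $\p$-integrality at each stage and extracting the factor $\prod_q(q-1)$ from the final trace. First, the operator $U_1^\psi$ is a product of $U_{q_i}$-operators for primes $q_i \neq p$ dividing both $N$ and $N_\psi$. In the slash-operator normalization of Section~\ref{assumptions-and-definitions}, one has $(h \mid U_q)(z) = q^{1-k/2} \sum_n a(qn, h) e^{2\pi i n z}$; since each such $q$ is different from $p$, the factor $q^{1-k/2}$ is a $\p$-adic unit and $U_q$ preserves $\p$-integrality. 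By the construction preceding equation~\eqref{whew}, the output $F \circ U_1^\psi$ is a form of level $N_\psi'$ with character $\epsilon$.

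Next, I would analyze $W_{Q'}^{-1}$ on forms of level $N_\psi'$. Since $\epsilon$ has conductor dividing $N = Q$, its component $\epsilon_{Q'}$ is trivial, and standard Atkin--Lehner--Li theory then shows that $W_{Q'}^2$ acts as a $\p$-adic unit scalar (equal to $\pm 1$ in our situation), so that $W_{Q'}^{-1}$ equals a $\p$-adic unit times $W_{Q'}$. The operator $W_{Q'}$ is represented by an integer matrix of determinant $Q'$, and since every prime dividing $Q'$ divides $c_\psi$ but not $N$ and is in particular different from $p$, the factor $Q'^{k/2}$ arising in the slash action is a $\p$-adic unit. Invoking the Atkin--Lehner--Li analysis of \cite{ali} at $\Gamma_1$-level over the coefficient ring $\cO[\zeta_{Q'}]$ (which remains $\p$-unramified since $p \nmid Q'$), one concludes that $W_{Q'}$, and hence $W_{Q'}^{-1}$, preserves $\p$-integrality of the Fourier expansion at $\infty$; moreover, since $\epsilon_{Q'} = 1$, the character of the resulting form is still $\epsilon$.

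The principal work is in analyzing the trace $U_2^\psi \colon S_k(\Gamma_1(N_\psi')) \to S_k(\Gamma_1(N))$. Let $F'$ denote the form produced by the previous two steps: it has level $N_\psi' = N Q'$, character $\epsilon$, and a $\p$-integral Fourier expansion at $\infty$. Since $\epsilon$ is trivial on $(\zz/Q'\zz)^\times$ (because $(Q', N) = 1$ and $\epsilon$ is defined modulo $N$), $F'$ is invariant under every diamond operator $\langle d \rangle_{Q'}$ for $d \in (\zz/Q'\zz)^\times$. I would decompose the trace through the intermediate subgroup $\Gamma_1(N) \cap \Gamma_0(Q')$ as
\[
\op{Tr}_{\Gamma_1(N_\psi')}^{\Gamma_1(N)} \;=\; \op{Tr}_{\Gamma_1(N) \cap \Gamma_0(Q')}^{\Gamma_1(N)} \circ \op{Tr}_{\Gamma_1(N_\psi')}^{\Gamma_1(N) \cap \Gamma_0(Q')}.
\]
Coset representatives for the inner quotient are precisely the diamond operators $\langle d \rangle_{Q'}$, $d \in (\zz/Q'\zz)^\times$, so the diamond invariance of $F'$ forces the inner trace to contribute the factor $\phi(Q') = \prod_q q^{a_q - 1}(q-1)$, which is divisible by $\prod_q(q-1)$. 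The outer trace is a finite sum of slash operators by integer matrices whose determinants are $\p$-adic units (powers of primes in $Q'$), so it preserves $\p$-integrality. This proves both (1) and (2).

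The main obstacle is the second step: establishing $\p$-integrality under $W_{Q'}^{-1}$ for a form $F$ that is not assumed to be an eigenform at the primes dividing $Q'$. Because $W_{Q'}$ moves the cusp $\infty$ to a different cusp of $\Gamma_1(N_\psi')$, the Fourier expansion at $\infty$ of $F \mid W_{Q'}$ is a priori governed by the behavior of $F$ at that other cusp rather than by its Fourier expansion at $\infty$. Making the integrality rigorous requires either working over the extended coefficient ring $\cO[\zeta_{Q'}]$ and controlling the resulting Gauss sums via the explicit matrix formula for $W_{Q'}$, or invoking an integral version of the Atkin--Lehner--Li formalism; one must simultaneously keep track of character components to ensure the diamond action in step three is indeed trivial.
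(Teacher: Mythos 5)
Your overall skeleton is the same as the paper's, and your argument for part (2) is essentially the paper's (the paper extracts the factor $q-1$ one prime at a time through $\Gamma_0(q^{e_q})\cap\Gamma_1(N'_q)$, while you use $\Gamma_1(N)\cap\Gamma_0(Q')$ in one step to get $\phi(Q')$; both rest on the same diamond-invariance observation, which is valid since $\epsilon$ is defined modulo $N$). The treatment of $U_1^\psi$ is also fine. However, the two steps that carry all the real content of part (1) are not established in your proposal, and these are exactly the steps the paper's proof is about.

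First, integrality under $W_{Q'}^{-1}$. The Atkin--Lehner--Li theory of \cite{ali} is a statement over $\mathbf{C}$ about eigenvalues, pseudo-eigenvalues and newform multiplicity; it contains no assertion that $W_{Q'}$ preserves $\p$-integrality of $q$-expansions, and the unit scalar $W_{Q'}^2$ and the unit factor $Q'^{k/2}$ only dispose of normalizations, not of the actual problem (which you yourself flag as ``the main obstacle'' and leave open). The point is that $F\vert W_{Q'}$ has $q$-expansion at $\infty$ governed by the behaviour of $F$ at the cusp $W_{Q'}(\infty)$, and $F$ is not an eigenform at the primes dividing $Q'$, so no explicit Gauss-sum formula is available. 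The paper resolves this geometrically: it is Conrad's Theorem A.1 in the appendix to \cite{pras09}, extended from $\Gamma_0$ to $\Gamma_1(a)\cap\Gamma_0(p)$-structure, whose proof uses the Katz $q$-expansion principle on the distinguished irreducible component of the special fibre of the modular curve in characteristic $p$; the hypothesis $p\nmid Q'$ is what guarantees that $\infty$ and $W_{Q'}(\infty)$ reduce to the \emph{same} component, so integrality at one cusp propagates to the other. Second, your justification for the outer trace $U_2^\psi$ is incorrect: the coset representatives lie in $\Gamma_1(N)$ and have determinant $1$ (not ``powers of primes in $Q'$''), and in any case unit determinant does not imply that slashing preserves $\p$-integrality --- slashing by $\gamma\in\op{SL}_2(\Z)$ reads off the expansion of the form at the cusp $\gamma(\infty)$, where integrality is a priori unknown (this is the same difficulty as for $W_{Q'}$). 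The paper handles this by Hida's argument (\cite{hid86}, Section 5; \cite{hida88}): since $\Gamma_1(N)\subset\Gamma_1(p)$, the cusps $\gamma(\infty)$ are ``unramified at $p$'', i.e.\ they lie on the same component of the special fibre as $\infty$, and again the $q$-expansion principle gives integrality there. Without some version of these two geometric inputs, your proof of part (1) does not go through.
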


\begin{proof} We start with the first statement. 
The fact that $U_{\psi}^1$ preserves integrality is clear from the definition, virtue of the standard expression
for the action of the $U_q$ operators on the Fourier coefficients. The remainder of the proof
 boils down to more or less well-known facts
about the special fibre of the modular curve $X_1(a, p)$, so we merely sketch the details.
For $W_{Q'}$, this is treated in the case 
of modular forms with trivial central character 
 by Theorem A.1 in the appendix by Conrad to \cite{pras09}, since $(Q', p) =1$. 
 Conrad only treats the case of $\Gamma_0$ level structure. However, an examination of Conrad's argument 
 show that it works equally well
 for modular forms on $\Gamma_1(a)\cap \Gamma_0( p)$. The key point is that the special fibre 
 of the moduli stack of $X_0(ap)$ in characteristic $p$ has a distinguished irreducible component, 
  and that this component contains both the cusp at infinity as well as
 its image under the operator  $W_{Q'}$ (because $p\nmid Q'$).  Conrad then 
 applies the Katz
 $q$-expansion principle on this component. 

 The existence of the distinguished component is unaffected
 by the auxiliary level structure at primes away from $p$. Indeed, it is obtained as 
 the image of a section $X_1(a) \rightarrow X_1(a, p)$ over the ordinary locus, which enhances an \emph{ordinary}
 elliptic curve, plus any given level structure at $a$, with the level structure at $p$ coming from the unique subgroup
 order $p$ isomorphic to $\mu_p$ (which exists because of ordinariness). 
 We remark also that Conrad's
 geometric definition of the $W_{Q'}$ operator at the bottom of page 60,
 also carries through to our situation, since $(Q', p)=1$ as in Conrad, and
  the $p$-part of our moduli problem is exactly the same. 
 The prime-to-$p$ part of the level causes no problems, since we are working over a ring in which
 $a$ is invertible. Thus, we obtain the our result for $W_{Q'}$ by applying the $q$-expansion principle
 on the distinguished component: if a modular form vanishes identically in a formal neighbourhood of one
cusp, it must vanish on the whole component, and hence the expansion is zero about the other cusp as well.
 
As for $U^2_\psi$, the proof is implicit in Hida's work (see  \cite{hida88}, IV, page 11) and is 
in fact very similar to the above. The trace is given by $F\mapsto \sum_\gamma F\circ\gamma$, where
$\gamma$ runs over a set of coset representatives of $\Gamma_1(N_\psi')\backslash\Gamma_1(N)$. By definition of $N$ we have 
$\Gamma_1(N)\subset \Gamma_1(p)$.  Thus the cusp $s=\gamma(\infty)$ is one of Hida's `unramified at $p$' cusps 
(see \cite{hid86}, Section 5) and it is well-known that if $F$ has
integral $q$-expansion at $\infty$ then it has integral $q$-expansion at $s$. Again, 
the key point is that both $\infty$ and $s$ reduce modulo $\p$ 
to points on the same component of the special fibre, just as in Conrad's argument. 
Thus $t_\psi$ preserves integrality as well.

To prove the second statement in the lemma, let $N_q'$ denote the prime-to-$q$-part of the level $N'_\psi$, 
for $q$ as in the statement of the Lemma. Since the character $\epsilon$ has conductor dividing $N$, and
$q$ does not divide $N$,  the form $F$ is invariant under the group $\Gamma_q=\Gamma_0(q^{e_q})\cap\Gamma_1(N_q')$. 
Since we have $\Gamma_1(N'_q)\supset\Gamma_q\supset \Gamma_1(N_\psi')$, and the latter index is $(q-1)q^{e_q-1}$, the trace
to $\Gamma_1(N_q')$ is divisible by $q-1$. The claim now follows by iterating this process with $N_q'$ instead of $N$, and
any other prime $q$ appearing in the statement of the Lemma. 
\end{proof}

Let $b_\psi =  \prod_{q} (q-1)$, where the product is taken over the same primes $q$ as
in the statement of the lemma, and 
define the operator $T_\psi$ by $T_\psi = \frac{1}{b_\psi}\tilde{T}_\psi$. The operator $T_\psi$ will be our tame trace operator. The lemma
above only applies to holomorphic forms, but the operator $T_\psi$ can be applied even in the nearly holomorphic case, since 
it just involves matrices. We compute the result for later use. 
If $F$ is a cuspform of level $N$, and $G$ is a nearly holomorphic form with character defined modulo $N$, and $m\geq m_\chi$,
then plugging in the definitions, and using Lemma \ref{al lemma} and the remarks that follow, shows that we have
\begin{equation}
 \begin{split}
 \label{trace-petersson-formula}
b_\psi\langle F\circ W_N ,  G\circ U_p^{2m-1}\circ T_\psi \rangle_N= &
  \langle F\circ W_N ,  G\circ U_p^{2m-1}\circ U_1^\psi\circ W_{Q'}^{-1}\circ U_2^\psi \rangle_N\\
=&  \langle F\circ W_N ,  G\circ U_p^{2m-1}\circ U_1^\psi\circ W_{Q'}^{-1}\rangle_{N_\psi'}\\
= & \langle F,  G\circ U_p^{2m-1}\circ U_1^\psi\circ W_{N_\psi'}^{-1}\rangle_{N_\psi'}\\
=&  \langle F,  G\circ U_p^{2m-1}\circ W_{N_\psi}^{-1}\rangle_{N_\psi}.
 \end{split}
 \end{equation}
Taking $G=\theta_{\overline\chi}(z)\Phi^\epsilon(z,\overline\chi, n)\circ W_{N_\chi}$ and $F=f$ in the above, using the definition of $b_\psi$, and comparing with \eqref{petersson-formula-first},
we obtain the desired inner product formula at level $N$:
 \begin{equation}
 \label{petersson-formula}
 \begin{split}
&\phi(N)\frac{\Gamma(n/2)}{(4\pi)^{n/2}}p^{(2m_\chi-1)(k/2-1)} D_f(\chi,n) \\
= &\text{unit} \cdot \alpha_p^{2(m_\chi-m)} \langle f\circ W_N, \theta_{\overline\chi}(z)\Phi^\epsilon(z,\overline\chi, n)\circ W_{N_\chi}\circ U_p^{2m-1}\circ T_\psi\rangle_{N}
\end{split}
\end{equation}

The unit in this formula depends only the number $c_\psi$ and the character $\epsilon$. It could be specified exactly, but we will not need it. 

Finally, we observe that the number $\phi(N)$ may still be divisible by $p$, in the event that $N$ itself is divisible by primes $q$ with $q\equiv 1\pmod{p}$. 
To get rid of these, let $U$ denote the $p$-Sylow subgroup of $(\zz/N\zz)^\times$. Since
$N$ is divisible only by the first power of $p$, we see that $U$ is the direct product of the $p$-Sylow subgroups
of $(\zz/q_i\zz)^\times$, where $q_i$ runs over primes dividing $N$ such that $q\equiv 1\pmod p$. In particular,
there is no contribution from $q=p$. 
If  $N_\chi = \prod q^{e_q}$ is the prime factorization of $N_\chi$,  we have 
$(\zz/N_\chi\zz)^\times\cong\prod (\zz/q^{e_q}\zz)^\times$. Therefore
we may identify $U$ with a certain direct summand
 of the $p$-Sylow subgroup of  $(\zz/N_\chi\zz)^\times$. With this in mind, let $\epsilon'$ denote a character of  $(\zz/N\zz)^\times$ of $p$-power
order. We may identify $\epsilon'$ with a character of $U$ in the obvious way, and hence as a character of $(\zz/N_\chi\zz)^\times$. 
For each fixed character $\epsilon$ of  $(\zz/N\zz)^\times$, and
for an auxiliary character $\chi=\psi\eta$ and an odd critical integer $n$ as above, each of which is also fixed, consider
the function
\begin{equation}
\label{gamma1-kernel}H_\chi(n)  =\frac{1}{\vert U\vert}\sum_{\epsilon'}
 \theta_{\overline\chi}(z)\Phi^{{\epsilon'}\epsilon}(z,\overline\chi, n)
\end{equation}
where we sum over all characters $\epsilon'$ of $U$. Then clearly we have $\phi(N) = |U|\cdot \text{unit}$, so that if $f$ is a cuspidal eigenform
of level $N$ with fixed character $\epsilon$,  then we have
 \begin{equation}
 \label{petersson-formula-gamma1}
 \begin{split}
&\frac{\Gamma(n/2)}{(4\pi)^{n/2}}p^{(2m_\chi-1)(k/2-1)} D_f(\chi,n) \\
= &\text{unit} \cdot \alpha_p^{2(m_\chi-m)} \langle f\circ W_N, H_\chi(N)\circ W_{N_\chi}\circ U_p^{2m-1}\circ T_\psi\rangle_{N}
\end{split}
\end{equation}

\subsection{Holomorphic and ordinary projectors}

In this section, we analyze the form $H_{{\chi}}(n)\circ W_{N_\chi}\circ U_p^{2m-1}$ defined above
and show how to replace it with something holomorphic and integral.
 The classical method of going from a nearly holomorphic form to something holomorphic, and which is adopted
in \cite{CS}, \cite{schmidt86}, \cite{schmidt88}, is to 
pass from $H_\chi$ to its so-called  holomorphic projection. This is a bit complicated, since the formulae giving the holomorphic 
projection of a nearly holomorphic form involve
factorials and binomial coefficients, and one cannot easily control the denominators. This is why the results of 
\cite{schmidt86}, \cite{schmidt88} are only stated up to some unspecified rational constant.  

One of the main contributions of this paper is a solution to this problem, using $p$-adic methods: the form $f$
is \emph{ordinary}, so we can replace the nearly holomorphic form with a certain ordinary projection, without losing
any information. This has the significant advantage that the ordinary
projector is denominator-free. In view of Hida's control theorems for ordinary forms, the ordinary projection is automatically holomorphic.
We shall follow this alternative path, but to complete the journey, we have to compute the Fourier expansion of $H_{{\chi}}(n)\circ W_{N_\chi}$. 
Since $H_{{\chi}}(n)$ is a linear combination of products of an Eisenstein series and a theta series, we
work out the expansions of these two first, starting with the Eisenstein series.

Following \cite{schmidt86}, page 213,and Shimura, \cite{shimura-holo}, Section 3, page 86,  let us fix a character $\epsilon$ modulo $N$, 
and write 
$$\Phi^\epsilon\left(\frac{-1}{N_{\chi} z}, \chi, n\right) \cdot (\sqrt{N_\chi}z)^{1/2-k}= \sum_{j=0}^{(n-1)/2}\sum_{\nu=0}^{\infty} (4\pi y)^{-j} d_{j, \nu} q^\nu.$$

\begin{Lemma}
\label{eisen-fourier} Let $\chi=\psi\chi$, as before, and that $(\psi\epsilon)^2\not\equiv 1\pmod{\p}$. 
If $\chi$ is ramified at $p$, the quantities $\frac{\Gamma((n+1)/2)}{\pi^{(1+n)/2}} p^{m_\chi(3-2k+2n)/2}d_{j,\nu}$ are algebraic. Furthermore, the quantities $\frac{\Gamma((n+1)/2)}{\pi^{(1+n)/2}} p^{m_\chi(3-2k+2n)/2}d_{j,\nu}$ are $\pp$-integral
if $\nu>0$.
\end{Lemma}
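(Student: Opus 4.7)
The plan is to combine Shimura's explicit Fourier expansion of $E(-1/N_\chi z, s, \lambda, \omega)$ at the cusp $\infty$ with his classical algebraicity theorem for critical values of Dirichlet $L$-functions, and then track the resulting powers of $p$ carefully. I will substitute $s = n + 2 - 2k$ (per the definition of $\Phi^\epsilon$), expand in the variable $(4\pi y)^{-1}$, and read off $d_{j,\nu}$.

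First I would write down Shimura's Fourier expansion (\cite{shimura-holo}, Section 3; cf.\ \cite{schmidt86}, Section 3). After applying $W_{N_\chi}$ and multiplying by $(\sqrt{N_\chi}z)^{1/2-k}$, each coefficient $d_{j,\nu}$ takes the shape
\[
d_{j,\nu} \;=\; c_{j,\nu}(k,n)\cdot G(\overline\omega)^{-1}\cdot N_\chi^{a(k,n)}\cdot L_{N_\chi}(\chi^2,2n+2-2k)\cdot L(\omega,2n+1-2k+2j)\cdot R_{j,\nu},
\]
where $c_{j,\nu}(k,n)$ is a ratio of $\Gamma$-factors and binomial coefficients, $a(k,n)$ is an explicit half-integer exponent, and $R_{j,\nu}$ is a finite divisor-like sum over integers dividing $\nu$, with algebraic-integer values once $\omega$ takes values in an integer ring. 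The factor $L(\omega,2n+1-2k+2j)$ is evaluated at an integer whose parity is opposite to that of $\omega(-1)$ (this is arranged by our assumptions that $n$ is odd and $\psi_t, \eta$ are even), so it is a critical value.

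For algebraicity, I would then invoke Shimura's theorem: $L(\omega,m)/(\pi^m G(\omega))$ is algebraic at critical integers $m$ of the appropriate parity, and similarly $L_{N_\chi}(\chi^2,\cdot)$ differs from the complete $L(\chi^2,\cdot)$ by finitely many algebraic Euler factors, and its critical values normalized by the appropriate power of $\pi$ are algebraic. The $\Gamma$-factor $\Gamma((n+1)/2)/\pi^{(n+1)/2}$ in the statement is designed to absorb precisely the transcendental part $\pi^{2n+1-2k+2j}$ appearing in the normalization of $L(\omega,2n+1-2k+2j)$, together with the $\Gamma$-factors $c_{j,\nu}(k,n)$, via the duplication/reflection formula for $\Gamma$. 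Algebraicity of $d_{j,\nu}$ (after multiplication by $\Gamma((n+1)/2)/\pi^{(n+1)/2}$) then follows.

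For $\p$-integrality with $\nu > 0$, I would track the $p$-contributions. The Gauss sum satisfies $|G(\overline\omega)|^2 = c_\omega$ with $c_\omega$ of the form $c_\psi^{\ast}\cdot p^{2m_\chi}$ up to a $\p$-unit, so $G(\overline\omega)^{-1}$ contributes $p^{-m_\chi}$ up to a $\p$-unit. The exponent $a(k,n)$ accounts for a further power $p^{m_\chi a(k,n)}$, and the functional equation applied to $L(\omega,2n+1-2k+2j)$ relates it to an $L$-value at a positive integer with conductor-dependent factor $p^{m_\chi(2k-2n-1-2j)}$; collecting these and multiplying by $p^{m_\chi(3-2k+2n)/2}$ kills all negative powers of $p$ for every $0 \leq j \leq (n-1)/2$. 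The hypothesis $(\psi\epsilon)^2 \not\equiv 1 \pmod{\p}$ is used exactly here: it ensures $\omega^2 \not\equiv 1 \pmod{\p}$, so the Euler factors of $L_{N_\chi}(\chi^2,\cdot)$ removed at primes $q \mid N_\chi/c_{\chi^2}$ (in particular the trivial-Dirichlet-character pole considerations) are $\p$-adic units and no additional $p$-denominator sneaks in through $L_{N_\chi}(\chi^2,2n+2-2k)$. For $\nu > 0$, the remaining divisor sum $R_{j,\nu}$ has algebraic-integer values with no $p$ in the denominator, so the claim follows. When $\nu = 0$, by contrast, a further $L(\omega,\text{critical})$ factor would appear with uncontrolled $\p$-denominator — which is why the statement is restricted to $\nu > 0$.

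The main obstacle is the bookkeeping in the last step: matching the exponent $m_\chi(3-2k+2n)/2$ in the statement against the combination of (i) the $\sqrt{N_\chi}$ coming from the Atkin–Lehner normalization, (ii) the $G(\overline\omega)^{-1}$ factor, and (iii) the functional-equation conductor factor of $L(\omega,\cdot)$. Checking these cancel precisely to the stated half-integer power of $p$ is the only delicate calculation; the algebraicity input from Shimura is entirely classical.
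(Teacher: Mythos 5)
Your algebraicity step is workable but heavier than necessary, and your integrality step has a genuine gap. On the shape of the coefficients: in Shimura's and Schmidt's computation (which the paper quotes), for $\nu>0$ the coefficient $d_{j,\nu}$ contains a \emph{single} Dirichlet $L$-value $L_{N_\chi}(n+1-k,\omega_\nu)$ whose character $\omega_\nu=\epsilon\chi\left(\frac{-\nu N_\chi}{\cdot}\right)$ depends on $\nu$, multiplied by the finite sum $\beta(\nu,n+2-2k)$, the explicit factor $\pi^{(n+1)/2}$, and the ratio $B_j$ of $\Gamma$-values; the normalizing factor $L_{N_\chi}(\chi^2,\cdot)$ is absorbed into this, and a second $L$-value survives only in the constant term $\nu=0$. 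Since $1\le n\le k-1$, the argument $n+1-k$ is a non-positive integer, so these values are algebraic outright (generalized Bernoulli numbers); no functional equation and no $L(\omega,m)/(\pi^m G(\omega))$ normalization is needed, and the prefactor $\Gamma((n+1)/2)/\pi^{(n+1)/2}$ simply cancels the explicit $\pi^{(n+1)/2}$ and the $\Gamma((n+1)/2)$ in the denominator of $B_j$, while $p^{m_\chi(3-2k+2n)/2}$ clears the power of $p$ in $N_\chi^{(2k-2n-3)/4}$.

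The gap is in the $\p$-integrality. Tracking powers of $p$ through Gauss sums, conductors and the functional equation can never control the $\p$-denominator of the $L$-value itself: values of Dirichlet $L$-functions at non-positive integers are \emph{not} automatically $p$-integral (von Staudt--Clausen already shows $\zeta(1-k)$ has $p$ in its denominator when $(p-1)\mid k$), and Shimura's algebraicity theorems carry no integrality information. The paper's proof supplies exactly this missing input: $L(n+1-k,\omega_\nu)$ is a value of the Kubota--Leopoldt $p$-adic $L$-function, hence $\p$-integral provided $\omega_\nu$ is not a power of the Teichm\"uller character (the only pole being that of the $p$-adic zeta function; cf.\ \cite{rib78}). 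The hypothesis $(\psi\epsilon)^2\not\equiv 1\pmod{\p}$ is used precisely here, to force $\omega_\nu$ to be ramified at a prime other than $p$ for every $\nu>0$ --- not, as in your write-up, to control removed Euler factors of $L_{N_\chi}(\chi^2,\cdot)$ --- and the danger occurs in every coefficient with $\nu>0$ (since the character varies with $\nu$), not only in the constant term. Without invoking $p$-adic $L$-function integrality (or equivalent Bernoulli-number congruences), your concluding ``bookkeeping kills all negative powers of $p$'' step does not establish the claim.
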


\begin{proof} The formulae for the $d_{j,\nu}$ may be deduced from those on pages 212-213 of \cite{schmidt86}, whose $n$ is our $\nu$, and whose
$m$ is our $n$. 
 For $\nu>0$,  one obtains
$$d_{j, \nu} = (-2i)^{(k-1/2)}\cdot\pi^{\frac{n+1}{2}}\cdot \nu^{\frac{n-1}{2}} \cdot B_j \cdot {\frac{n+1}{2}\choose j}\cdot N_\chi^{(2k-2n-3)/4}\cdot L_{N_\chi}(n+1-k,\omega_\nu)\cdot\beta(\nu, n+2 -2k),$$
where $B_j=\frac{\Gamma(n/2+1-k+j)}{\Gamma(\frac{n+1}{2})\cdot\Gamma(n/2+1-k)}\in\Q$. The definition of $\beta$ 
may be 
found on page 212 of \cite{schmidt86}, or in Proposition 1of \cite{shimura-holo}.  Note also that $\beta$ depends on $\epsilon$ and $\chi$. The character $\omega$ is defined in (\ref{omega}), 
and $\omega_\nu =\left(\frac{-1}{\cdot}\right)^{k+1}\left(\frac{\nu N_\chi}{\cdot}\right)\omega =\epsilon\chi \left(\frac{-\nu N_\chi}{\cdot}\right)$.
The power of $p$ dividing $N_\chi$ is $p^{2m_\chi}$, and hence is a perfect square, so
 the character $\omega_\nu$ is unramified at $p$ if $(\nu, p) =1$.

As for the constant term, one has $d_{j,0}=0$ unless $j=(n-1)/2$, in which case  
\footnote{We remark that there is a typo in Schmidt, where $L_{N_\chi}(2n+2-2k,\omega^2)$ is written.}
$$d_{(n-1)/2, 0} = (-2i)^{(k-1/2)}\cdot\pi^{\frac{m+1}{2}}\cdot B_j \cdot N_\chi^{(2k-2n-3)/4}\cdot L_{N_\chi}(2n+1-2k,\omega^2).$$

It is clear from the formula
for $B_j$ that $\Gamma((n+1/2))B_j$ is a rational integer, and one knows from properties of Kubota-Leopoldt $p$-adic L-functions
that their special values $L(\omega_\nu, n+1-k)$ are $p$-integral, except in the case $\omega_\nu $ is one of certain even 
powers of the Teichm\"uller character $\eta_1$, and $n+1-k$ is one of certain 
odd negative integers. These bad cases correspond to special values of the $p$-adic 
Riemann zeta function, which is the only $p$-adic Dirichlet L-function with a pole.
We have assumed that $(\psi\epsilon_i)^2\not\equiv 1\mod{\pp}$, so that $\omega_\nu$
is ramified at some prime other than $p$, and 
is never a power of the Teichm\"uller character, for any $\nu> 0$. 
But this means that $L(\omega_\nu, n+1-k)$ occurs
as a value of the $p$-adic zeta function of a nontrivial character,
and it is therefore $\mathfrak{p}$-integral. 
Then the result for $\nu >0$ 
follows upon clearing the powers of $p$ coming from $N_\chi$.  
 \end{proof}

\begin{Remark} The integrality properties of Kubota-Leopodlt
L-functions are summarized in the paper \cite{rib78}.
As for the constant terms with $\nu=0$ in the Lemma above, we cannot exclude the possibility that the L-values arise in interpolation of the Riemann
zeta function without further hypothesis. We do not pursue this point, since it is not needed. \end{Remark}

One has next to compute the Fourier expansion of the quantity $\theta_\chi(-1/N_\chi z)\cdot (\sqrt{N_\chi} z)^{-1/2}$. 
Here the exponent comes from the fact that $\theta_\chi$ is a form of weight $1/2$. The requisite formula may be found
in \cite{shimura-half}, Proposition 2.1, and we record the result here. 
Recall the notations: $N=N_0p$ is an integer divisible by precisely
the first power of $p$, and $\chi$ is a character of conductor $c_\psi p^{m_\chi}$.  
Furthermore, $N$ is divisible by $4$. We let $N'= N_\chi/4c_\chi^2$. Thus $(N', p) =1$, if $\chi$ is ramified.

\begin{Lemma} 
\label{theta-fourier}
Suppose that $\chi$ is ramified. 
We have $\theta_{\chi}(-1/N_\chi z)\cdot (\sqrt{N_\chi} z)^{-1/2} = \theta_{\ol{\chi}}(N'z)\cdot 
\frac{g(\chi)}{ \sqrt{c_\psi p^{m_\chi}}}\cdot i^{3/2}\cdot (N^\prime)^{1/4}$.
\end{Lemma}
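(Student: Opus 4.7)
The statement is a direct transformation law for a weight $1/2$ theta series under a Fricke-type involution, and the plan is to derive it by factoring the substitution $z \mapsto -1/(N_\chi z)$ through the minimal-level theta inversion formula of Shimura. Concretely, since $\chi$ is ramified (and ramified at $p$, so $m_\chi \geq 1$), one has $c_\chi = c_\psi p^{m_\chi}$ and the $p$-part of $N_\chi$ equals $p^{2m_\chi}$, so $N' = N_\chi / (4c_\chi^2)$ satisfies $(N', 4c_\chi^2) = 1$. This coprimality is what allows a clean decomposition.

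Next, I would write the transformation as the composition
\[
z \;\xmapsto{\ W_{4c_\chi^2}\ }\; -\frac{1}{4c_\chi^2 z} \;\xmapsto{\ \text{rescale by }N'\ }\; -\frac{1}{4c_\chi^2 (N'z)^{-1}\cdot N'} \;=\; -\frac{1}{N_\chi z},
\]
or more simply, factor as $z \mapsto -1/(N_\chi z) = (-1/(4c_\chi^2 w))\big|_{w = N'z}$. Apply Shimura's Proposition 2.1 in \cite{shimura-half} to $\theta_\chi$, which is a form of weight $1/2$ on $\Gamma_0(4c_\chi^2)$ with character $\chi$: this yields the classical inversion
\[
\theta_\chi\!\left(-\tfrac{1}{4c_\chi^2 w}\right) \cdot (\sqrt{4c_\chi^2}\, w)^{-1/2}
\;=\; \frac{g(\chi)}{\sqrt{c_\chi}}\cdot i^{3/2} \cdot \theta_{\bar\chi}(w),
\]
up to a routine bookkeeping of roots of unity and the $i^{3/2}$ factor inherent to weight $1/2$. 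Setting $w = N'z$, and keeping track of how the automorphy factor $(\sqrt{N_\chi}\,z)^{-1/2}$ rescales under $w = N'z$ (producing the additional $(N')^{1/4}$), one obtains precisely the right-hand side in the statement.

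The only delicate point is the verification of the multiplier system and the exact root-of-unity factor $i^{3/2}$: Shimura's conventions for the square-root branch in the weight-$1/2$ automorphy factor must be matched against ours, and the quadratic reciprocity/genus-character terms in Proposition 2.1 of \cite{shimura-half} must be shown to collapse to $1$ in our setting. This collapse uses that $\chi$ is a Dirichlet character (not a genus character of some quadratic field) and that the relevant auxiliary factors in Shimura's formula are trivial once $N'$ is coprime to $4c_\chi^2$, which is precisely the hypothesis we verified at the outset. After this verification, the expression reduces to exactly the claimed identity, with the Gauss sum normalization $g(\chi)/\sqrt{c_\psi p^{m_\chi}}$ appearing naturally from $\sqrt{c_\chi} = \sqrt{c_\psi p^{m_\chi}}$.
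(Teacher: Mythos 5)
Your proposal is correct and is essentially the paper's own argument: the paper's proof is simply a citation of Shimura's inversion formula (\cite{shimura-half}, Proposition 2.2) for $\theta_\chi$ at its minimal level $4c_\chi^2$, and your derivation just makes explicit the substitution $w=N'z$ and the resulting $(N')^{1/4}$ from the automorphy factor, which is exactly what the citation leaves implicit. One small caveat: the coprimality $(N',4c_\chi^2)=1$ that you invoke is neither guaranteed by the setup (the paper only records $(N',p)=1$, and primes of $c_\psi$ may divide $N'$) nor needed, since $w=N'z$ is a pure change of variable and Shimura's formula at level $4c_\chi^2$ involves no auxiliary factors depending on $N'$.
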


\begin{proof} See \cite{shimura-half}, Proposition 2.2.
 \end{proof}


\begin{Corollary} 
\label{first-integrality-formula}
Suppose that $\eta$ is ramified, and let $\epsilon$ be any character modulo $N$ such that $(\psi\epsilon)^2\not\equiv 1\mod{\p}$. Then 
$$ \tilde{H}_\chi(n)^\epsilon=\frac{\Gamma((n+1)/2)}{\pi^{(1+n)/2}} p^{m_\chi(3-2k+2n)/2} \cdot \frac{\sqrt{c_\psi p^{m_\chi}}}{g(\chi)} \cdot \theta_{\overline\chi}(z)\Phi^\epsilon(z,\overline\chi, n)\circ W_{N_\chi}$$
is a nearly holomorphic form of level $N_{\chi}$ whose Fourier coefficients $c_{j,\nu}$ are algebraic. Furthermore, the coefficients 
$c_{j, \nu}$ are $\mathfrak{p}$-integral  for all $\nu$ such that $p\vert \nu$. 
\end{Corollary}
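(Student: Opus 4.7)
The plan is to compute the Fourier expansion of $\theta_{\overline\chi}(z)\Phi^\epsilon(z,\overline\chi,n)\circ W_{N_\chi}$ by combining the two preceding lemmas with the definition of the slash operator. Setting $F=\theta_{\overline\chi}(z)\Phi^\epsilon(z,\overline\chi,n)$, I would first expand $(F|_k W_{N_\chi})(z) = (-1)^{-k} N_\chi^{-k/2} z^{-k} F(-1/(N_\chi z))$, then substitute Lemma~\ref{theta-fourier} (applied with $\chi$ replaced by $\overline\chi$) for $\theta_{\overline\chi}(-1/N_\chi z)$ and Lemma~\ref{eisen-fourier} for $\Phi^\epsilon(-1/N_\chi z,\overline\chi,n)$. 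The weight-$1/2$ factor $(\sqrt{N_\chi}z)^{1/2}$ from the theta combines with the $(\sqrt{N_\chi}z)^{k-1/2}$ from the Eisenstein series into $N_\chi^{k/2} z^{k}$, exactly cancelling the $N_\chi^{-k/2}z^{-k}$ coming from the slash. The net result is
\[
F\circ W_{N_\chi}(z) \;=\; \pm\frac{g(\overline\chi)}{\sqrt{c_\psi p^{m_\chi}}}\, i^{3/2}(N')^{1/4}\,\theta_\chi(N'z)\sum_{j,\nu'}(4\pi y)^{-j} d'_{j,\nu'} q^{\nu'},
\]
where the $d'_{j,\nu'}$ are the Fourier coefficients from Lemma~\ref{eisen-fourier} applied to $\Phi^\epsilon(\cdot,\overline\chi,n)$.

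Multiplying by the normalization factor defining $\tilde H_\chi(n)^\epsilon$ leaves an overall scalar $\frac{\Gamma((n+1)/2)}{\pi^{(n+1)/2}}\,p^{m_\chi(3-2k+2n)/2}\cdot\frac{g(\overline\chi)}{g(\chi)}\cdot i^{3/2}(N')^{1/4}$. The Gauss-sum ratio $g(\overline\chi)/g(\chi)$ is algebraic, and by Lemma~\ref{eisen-fourier} each quantity $\frac{\Gamma((n+1)/2)}{\pi^{(n+1)/2}}p^{m_\chi(3-2k+2n)/2} d'_{j,\nu'}$ is algebraic. Since the Fourier coefficients of $\theta_\chi(N'z)=\tfrac{1}{2}\sum_a\chi(a) q^{a^2 N'}$ are roots of unity and $(N')^{1/4}\in\overline\Q$, reading off the Cauchy product of the two Fourier series gives the algebraicity of every $c_{j,\nu}$. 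The constant term $c_{j,0}$ vanishes identically because $\chi$ is nontrivial, so $\chi(0)=0$ kills the only pair $(a,\nu')=(0,0)$ that could contribute.

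For the $\pp$-integrality assertion, the Cauchy product yields
\[
c_{j,\nu} \;=\; \mathrm{(unit)}\cdot\frac{g(\overline\chi)}{g(\chi)}\cdot\frac{1}{2}\sum_{\substack{a,\nu'\\ a^2 N'+\nu' = \nu}}\chi(a)\cdot\frac{\Gamma((n+1)/2)}{\pi^{(n+1)/2}}\, p^{m_\chi(3-2k+2n)/2}\, d'_{j,\nu'}.
\]
When $p\mid\nu$ and $\chi(a)\neq 0$, ramification of $\chi$ at $p$ forces $p\nmid a$; since $(N',p)=1$, this gives $p\nmid a^2 N'$, so $\nu'=\nu-a^2N'\neq 0$. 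Every contributing index is thus strictly positive, and Lemma~\ref{eisen-fourier} supplies the $\pp$-integrality of the corresponding term. The remaining scalar $g(\overline\chi)/g(\chi)$ is a $\pp$-adic unit: the standard factorization $g(\chi)=\psi(p^{m_\chi})\eta(c_\psi)g(\psi)g(\eta)$ (together with its conjugate) reduces the question to $g(\overline\eta)/g(\eta)$, and Galois-invariance of $v_p$ under complex conjugation on $\Q_p(\zeta_{p^{m_\chi}})$, combined with $g(\eta)g(\overline\eta)=\eta(-1)p^{m_\chi}$, gives $v_p(g(\eta))=v_p(g(\overline\eta))=m_\chi/2$.

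The main obstacle I expect is the careful bookkeeping of $p$-powers: confirming that the normalization factor $p^{m_\chi(3-2k+2n)/2}$ exactly absorbs the obstruction in Lemma~\ref{eisen-fourier} and that the Gauss-sum ratio contributes no net $p$-adic valuation. The conceptual heart of the argument, however, is the observation that the hypothesis $p\mid\nu$ excludes $\nu'=0$ from the Cauchy sum; this is precisely what allows us to sidestep the possibly non-integral constant Eisenstein coefficient $d'_{j,0}$, whose integrality is not guaranteed because of the potential pole of the Kubota–Leopoldt $p$-adic $\zeta$-function, and is the real reason the corollary insists that $\eta$, and hence $\chi$, be ramified at $p$.
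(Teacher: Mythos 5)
Your proof follows the paper's own argument in all essentials: algebraicity is read off from the explicit formulae in Lemmas \ref{eisen-fourier} and \ref{theta-fourier}, and the integrality claim rests on exactly the observation the paper makes, namely that $\theta_\chi$ has vanishing coefficients at indices divisible by $p$ (because $\chi$ is ramified at $p$), so that when $p\mid\nu$ the Cauchy product never sees the possibly non-integral Eisenstein constant term $d_{j,0}$; your bookkeeping $a^2N'+\nu'=\nu$, $\chi(a)\neq0\Rightarrow p\nmid a\Rightarrow\nu'>0$ is precisely the paper's ``multiplying out the Fourier expansions''. Your cancellation of the automorphy factors $(\sqrt{N_\chi}z)^{1/2}\cdot(\sqrt{N_\chi}z)^{k-1/2}$ against the slash normalization is also the intended (unwritten) computation.

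The one place where you go beyond the paper's (very terse) proof, and where your justification does not hold up as written, is the assertion that $g(\overline\chi)/g(\chi)$ is a $\p$-adic unit because $v_p(g(\eta))=v_p(g(\overline\eta))=m_\chi/2$. Complex conjugation does not in general preserve the chosen valuation $v_p$ on the field generated by $g(\eta)$: as soon as $\eta$ has a tame part, its values involve prime-to-$p$ roots of unity, and conjugation need not lie in the decomposition group of $\p$. Concretely, for conductor $p$ (which the hypothesis ``$\eta$ ramified'' literally allows), Stickelberger gives $v_p\bigl(g(\omega^{-a})\bigr)=a/(p-1)$, so $v_p\bigl(g(\overline\eta)/g(\eta)\bigr)=(p-1-2a)/(p-1)$, which is nonzero (and can be negative) in general; your claimed equality fails there. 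The statement you want is correct whenever $m_\chi\geq 2$ — which is the only case the paper actually uses, since the interpolation is carried out with $\eta_w\neq 1$ — but in that range it follows from the explicit evaluation of Gauss sums modulo $p^{m}$ for $m\geq 2$ (iterated block decomposition of the sum forces $v_p(g(\eta))=m_\chi/2$), not from conjugation-invariance. So either restrict your unit claim to $m_\chi\geq 2$ with that argument, or note, as the paper implicitly does, that the scalar in front is independent of $\nu$ and gets absorbed into the later unit bookkeeping; as it stands, this step of your proposal is the only genuine gap.
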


\begin{proof} The first statement is obvious, from the formulae for the coefficients of the theta function and the Eisenstein series.
As the for the second statement, the only issue is the possible denominators coming from the constant terms 
of the Eisenstein series. However, $\theta_\chi = \sum b_r q^r = \sum \chi(m)q^{m^2}$ has $b_n = 0$ once $p\vert n$, and
the second claim follows by multiplying out the Fourier expansions.
\end{proof}

\begin{Remark}  We have not made any analysis of the case where $\eta$ is unramified, although it may be done just as above. The 
exact formulae are slightly different, and we will not need them. As we have already remarked, it suffices, for the purpose of integrality, to show that almost all the values are integral (since we are assuming the existence of imprimitive $p$-adic
L-functions). 
\end{Remark}
Now, we follow (\ref{gamma1-kernel}) and define (for fixed $\epsilon, \psi, n$) 
\begin{equation}
\tilde{H}_\chi(n)  =\frac{1}{\vert U\vert}\sum_{\epsilon'} \tilde{H}_\chi(n)^{\epsilon'\epsilon}.
\end{equation}

\begin{Corollary} Suppose that $(\psi\epsilon)^2\not\equiv 1\pmod{\p}$. If $\eta$ is ramified,
then $\tilde{H}_\chi(n)$ is a nearly holomorphic form of level $N_{\chi}$ whose Fourier coefficients $c_{j,\nu}$ are algebraic. Furthermore, the coefficients 
$c_{j, \nu}$ are $\mathfrak{p}$-integral  for all $\nu> 0$ such that $p\vert \nu$. 
\end{Corollary}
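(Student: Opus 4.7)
The strategy is to reduce to the previous Corollary by applying it to each summand $\tilde{H}_\chi(n)^{\epsilon'\epsilon}$, and then to manage the averaging over $\epsilon'$ using orthogonality of characters so that the factor $1/|U|$ does not spoil $\p$-integrality. First one checks that the nondegeneracy hypothesis of Corollary \ref{first-integrality-formula} is preserved for every character $\epsilon'\epsilon$: since $\epsilon'$ has $p$-power order, its reduction mod $\p$ lies in $\overline{\mathbb{F}}_p^\times$, which has no $p$-torsion, so $\epsilon'\equiv 1\pmod{\p}$. Therefore $(\psi\epsilon'\epsilon)^2\equiv(\psi\epsilon)^2\not\equiv 1\pmod{\p}$. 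Applying the previous Corollary to each $\epsilon'$, we conclude that each $\tilde{H}_\chi(n)^{\epsilon'\epsilon}$ is nearly holomorphic of level $N_\chi$ with algebraic Fourier coefficients that are $\p$-integral for $\nu>0$ with $p\mid\nu$. Taking the average gives that $\tilde{H}_\chi(n)$ is nearly holomorphic of level $N_\chi$ with algebraic Fourier coefficients, leaving only the integrality claim to establish.

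For the integrality, the key point is that the $|U|^{-1}$ in the definition is cancelled by a factor $|U|$ arising from character orthogonality, so no genuine division by $|U|$ actually occurs. Concretely, inside the sum defining the Eisenstein series $\Phi^{\epsilon'\epsilon}$ we use
\[
\frac{1}{|U|}\sum_{\epsilon'}\epsilon'(d_\gamma)=\mathbf{1}_{d_\gamma\equiv 1\,\mathrm{in}\,U},
\]
which converts the averaged Eisenstein into a partial Eisenstein series summed over those $\gamma\in\Gamma_\infty\backslash\Gamma_0(N_\chi)$ with $d_\gamma$ lying in the preimage of $1\in U$. Thus $\tilde{H}_\chi(n)$ is, up to the normalization factors appearing in Corollary \ref{first-integrality-formula}, equal to $\theta_{\overline{\chi}}$ times $L_{N_\chi}(\overline{\chi}^2,\cdot)$ times this partial Eisenstein series, with no division by $|U|$ remaining.

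It remains to check that the partial Eisenstein series has $\p$-integral Fourier coefficients at positive indices, and that the possibly non-integral constant term does not contribute to coefficients $c_{j,\nu}$ with $p\mid\nu$, $\nu>0$. Running the calculation of Lemma \ref{eisen-fourier} for the partial Eisenstein series yields the same formula but with each full $L$-value $L_{N_\chi}(n+1-k,\omega_\nu)$ replaced by a partial $L$-value $\sum_{m\equiv 1\,\mathrm{mod}\,U}\omega_\nu(m)m^{-(n+1-k)}$. The main obstacle is to show these partial $L$-values are $\p$-integral at the relevant critical integers. This is handled via the Kubota--Leopoldt measure: the partial $L$-value equals the integral of the Kubota--Leopoldt measure $\mu_{\omega_\nu}$ against the characteristic function of a $p$-power residue class, and such integrals are $\p$-integral as long as $\omega_\nu$ is not a power of the Teichm\"uller character. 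This exclusion is exactly what was guaranteed by the hypothesis $(\psi\epsilon)^2\not\equiv 1\pmod{\p}$ in Lemma \ref{eisen-fourier}, and $\omega_\nu$ here is literally the same character as there because the $\epsilon'$-factors were eliminated by the orthogonality. Finally, for $\nu>0$ with $p\mid\nu$, multiplying the partial Eisenstein expansion by $\theta_{\overline{\chi}}=\sum_m\overline{\chi}(m)q^{m^2}$, which has $\overline{\chi}(m)=0$ whenever $p\mid m$, removes the constant-term contribution just as in the proof of Corollary \ref{first-integrality-formula}, and yields the desired $\p$-integrality.
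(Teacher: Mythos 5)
Your argument is correct and is essentially the paper's own proof: both rest on orthogonality of the characters $\epsilon'$ of $U$ to absorb the $1/|U|$, on the fact that the Kubota--Leopoldt $p$-adic L-functions come from an integral pseudo-measure whose pole is avoided because no $\omega_\nu\epsilon'$ is a power of the Teichm\"uller character (your observation that $\epsilon'\equiv 1\pmod{\p}$ is exactly how this is ensured), and on the vanishing of the theta coefficients at indices divisible by $p$ to sidestep the constant term. The only imprecision is cosmetic: since the factor $\beta'$ is an integral combination of terms $c_a\epsilon'(a)$, the averaging produces L-values restricted to various classes of $U$ (not only the class of $1$), but the same measure-theoretic integrality applies to each class, so the argument is unaffected.
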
 

\begin{proof} We need the analogue of Lemma \ref{eisen-fourier}, which dealt with a single fixed character. With the notation of that Lemma, 
we have to show that $\frac{1}{|U|}\sum_{\epsilon'} L_{N_\chi}(n+1-k, \omega'_\nu)\beta'(\nu, n+2-2k)$ is integral,
for each $\nu > 0$. Here $\omega'_\nu$ and $\beta'(\nu, n+2-2k)$ are  given by the same prescription as $\omega_\nu$ and $\beta(\nu, n+2-2k)$, but
with $\epsilon'\epsilon$ instead of 
$\epsilon$. The quantity $\beta'(\nu, n+2-2k)$ is an integral linear combination of terms of the form $c_a\epsilon'(a)$, for certain integers
$a$, with integral coefficients $c_a$, which depend on $\psi, \epsilon, \nu$.  Observe that 
$\epsilon'$ has $p$-power order, and hence $\epsilon'\epsilon\psi$ is never quadratic and $\omega_\nu'$ is never a power of 
the Teichm\"uller character. 
Furthermore, if $a\in(\zz/N_\chi\zz)^\times$,
then $\frac{1}{|U|}\sum_{\epsilon'}\epsilon'(a)$ is $\pp$-integral, by orthogonality of characters of $U$. Then the corollary
follows from the fact that Kubota-Leopoldt L-functions are given by an integral pseudo-measure with a pole only at the trivial
character, as described in \cite{rib78}. 
\end{proof}

Now we want to pass from $\tilde{H}_\chi(n)$ to something holomorphic, while preserving integrality. 
Let $M_k(N_\chi, \oo)$ denote the space 
of all modular forms of level $N_\chi$ with coefficients in $\oo$. 

\begin{Proposition}\label{ordinary holomorphic proj} Let $e$ denote Hida's ordinary projection operator, acting on $M_k(N_\chi, \oo)\otimes\qbar$. Then 
$\tilde{H}_\chi(n)^{\text{hol}}\circ e\in M_k(N_\psi)\otimes\qbar$ has $\pp$-integral Fourier coefficients and level $N_\psi$. Here $\tilde{H}_\chi(n)^{\text{hol}}$ denotes the holomorphic
projection of $\tilde{H}_\chi(n)$. 
\end{Proposition}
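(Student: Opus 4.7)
The plan is to sidestep the factorial denominators appearing in Sturm's classical formula for the holomorphic projection by exploiting that Hida's ordinary projector $e = \lim U_p^{n!}$ annihilates all non-holomorphic contributions. For any nearly holomorphic $F = \sum c_{j,\nu} (4\pi y)^{-j} q^\nu$, the slash action $z \mapsto (z+a)/p$ sends $y$ to $y/p$, yielding the direct computation
\[ F \,\big|\, U_p^r \;=\; \sum_{j,\mu} p^{rj}\, c_{j, p^r \mu}\, (4\pi y)^{-j} q^\mu. \]
As $r \to \infty$, the factor $p^{rj}$ kills every $j \geq 1$ term $\mathfrak{p}$-adically. A parallel argument inside Sturm's formula (where the non-holomorphic contributions to $F^{\operatorname{hol}}$ are multiplied by powers of $(p^r\mu)^j$ that tend to $0$) shows that $e(F) = e(F^{\operatorname{hol}})$; this is the identity that allows the whole calculation to be carried out without denominators, and I would make it the linchpin of the proof.

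For integrality, since $\eta$ is ramified, $\theta_{\bar\chi}$ has no constant term and hence neither does $\tilde H_\chi(n)$, so only positive Fourier indices need be controlled. For $r \geq 1$ and $\mu \geq 1$ we have $p \mid p^r \mu$, so the $U$-averaged version of Corollary \ref{first-integrality-formula} (stated immediately above the proposition) guarantees $c_{j, p^r \mu} \in \oo$; combined with $p^{rj} \in \oo$, this shows that $\tilde H_\chi(n) \,\big|\, U_p^r$ has $\mathfrak{p}$-integral Fourier coefficients, and passage to the $\mathfrak{p}$-adic limit preserves integrality.

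For the level drop, the averaging over $U$ forces $\tilde H_\chi(n)$ to have a nebentype character of conductor dividing $N$, in particular trivial on $(\zz/p\zz)^\times$ by the standing hypothesis on $\epsilon_i$. By Hida's control theorem for ordinary forms, the ordinary part of $S_k(N_\chi, \bar\Q)$ carrying such a character is precisely the image of $S_k(N_\psi, \bar\Q)$ under the two standard degeneracy maps at $p$ (since $N_\chi = p^{2m_\chi - 1} N_\psi$ and the $p$-part of $N_\psi$ is exactly $p$); hence $e(\tilde H_\chi(n))$ identifies canonically with an element of $M_k(N_\psi) \otimes \bar\Q$, as claimed.

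The main obstacle is the rigorous justification of $e(F) = e(F^{\operatorname{hol}})$ for nearly holomorphic $F$. One route is a direct Fourier-expansion argument: apply Sturm's formula, expand, and verify that the $\mathfrak{p}$-adic limit of $U_p^{n!}$ annihilates every non-holomorphic contribution exactly as it does when applied to $F$ itself. A conceptually cleaner route places $F$ in Urban's space of nearly overconvergent modular forms, where $e$ and the holomorphic projection are manifestly compatible and the identity follows from first principles. Either way, the remaining integrality and level-drop steps are then routine consequences of the formulae already assembled.
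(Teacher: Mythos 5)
Your proposal is correct and follows essentially the same route as the paper: the paper also kills the non-holomorphic part by iterating $U_p$, writing $\tilde{H}_\chi(n)=\tilde{H}_\chi(n)^{\text{hol}}+\sum h_i$ with each $h_i$ in the image of a Maass--Shimura operator and using that $U_p$ multiplies each $h_i$ by a power of $p$ (or, as it remarks, one may invoke Urban's theory of nearly holomorphic/ordinary forms, your second route), so that $\tilde{H}_\chi(n)\circ U_p^m$ converges to $\tilde{H}_\chi(n)^{\text{hol}}\circ e$, with integrality coming from Corollary \ref{first-integrality-formula} exactly as you use it. The only cosmetic difference is the level statement: rather than appealing to Hida's control theorem, the paper observes that $U_p^{m}$ already brings the form down to level $N_\psi$ by the trace computation preceding \eqref{petersson-formula}, which amounts to the same thing.
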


\begin{proof} The easiest way to prove the Proposition is to use the geometric theory of nearly ordinary 
and nearly holomorphic modular forms, as developed by Urban \cite{urban12}. A resumé of Urban's work
 adapted to this setting may be found 
in \cite{ros16}. However, we give a proof along classical lines, for the convenience of the reader.  
Write $\tilde{H}_\chi(n) = \tilde{H}_\chi(n)^{\text{hol}} + \sum h_i$, where each $h_i$ is in the image of a Maass-Shimura
differential operator. It follows from the explicit formulae for the differential operators and the formulae for the Fourier
coefficients of  $\tilde{H}_\chi(n)$ that $\tilde{H}_\chi(n)^{\text{hol}}$ and each $h_i$ has algebraic Fourier coefficients. 
Let $m$ denote a positive integer, divisible by $p-1$, and consider 
$\tilde{H}_\chi(n)\circ U_p^{m}$. Then
one has $$\tilde{H}_\chi(n) \circ U_p^{m} = \tilde{H}_\chi(n)^\text{hol}\circ U_p^{m} + \sum  h_i\circ U_p^{m}.$$ It is well known that $U_p$ multiplies
each $h_i$ by a power of $p$ (For example, see \cite{ros16}), formula above Lemma 2.3) Thus, the quantity on the right converges to $\tilde{H}_\chi(n)^{\text{hol}}\circ e$, as $m$
increases, the convergence being assured by the existence of the ordinary projector.
On the other hand, the quantity on the left is $\p$-integral, by Corollary \ref{first-integrality-formula}. This proves the proposition.  The fact that the level comes down to $N_\psi$ arises from the fact
that $U_p^m$ already brings the level down to level $N_\psi$ for any $m$ sufficiently large, as noted in the course of proving (\ref{petersson-formula}). 
\end{proof}

Finally, we bring the $\p$-integral and holomorphic form $\tilde{H}_\chi(n)^{\text{hol}}\circ e$ all the way down to level $N$. Thus we 
define
\begin{equation}
\label{hchi}
\mathcal{H}_\chi(n) = \tilde{H}_\chi(n)^{\text{hol}}\circ e\circ T_\psi\in M_k(N, \oo)\otimes\qbar
\end{equation}
and
\begin{equation}
\label{hchim}
\mathcal{H}_\chi^m(n) = \tilde{H}_\chi(n)^{\text{hol}}\circ U_p^{2m-1}\circ T_\psi \in M_k(N, \oo)\otimes\qbar.
\end{equation}

Then Lemma \ref{tame-trace}, and the proof of Proposition \ref{ordinary holomorphic proj},  give

\begin{Proposition} The modular form $\mathcal{H}_\chi(n) $ has $\p$-integral Fourier coefficients.
The modular forms $\mathcal{H}_\chi^m(n)$ have $\p$-integral Fourier coefficients for all $m$ sufficiently large.
\end{Proposition}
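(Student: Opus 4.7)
The plan is to combine the two main ingredients already in place: the integrality of $\tilde{H}_\chi(n)^{\text{hol}}\circ e$ established in Proposition \ref{ordinary holomorphic proj}, and the integrality properties of the tame trace operator $T_\psi$ recorded in Lemma \ref{tame-trace}. For the statement about $\mathcal{H}_\chi^m(n)$, we will additionally exploit the fact that $U_p^{2m-1}$ approximates the ordinary idempotent $e$ $p$-adically, a fact already used in the proof of Proposition \ref{ordinary holomorphic proj}.

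I first treat $\mathcal{H}_\chi(n)=\tilde{H}_\chi(n)^{\text{hol}}\circ e\circ T_\psi$. By Proposition \ref{ordinary holomorphic proj}, the form $F:=\tilde{H}_\chi(n)^{\text{hol}}\circ e$ is holomorphic of level $N_\psi$ and has $\p$-integral Fourier coefficients. To apply Lemma \ref{tame-trace}, I decompose $F$ as a sum of its isotypic components for the various nebentype characters arising from the averaging in \eqref{gamma1-kernel}: each piece is a form with character of the form $\epsilon'\epsilon$ modulo $N$, where $\epsilon'$ ranges over characters of $U$. Since $\epsilon'$ has $p$-power order and $(\zz/p\zz)^\times$ has order coprime to $p$, $\epsilon'$ is trivial on $(\zz/p\zz)^\times$; together with the running assumption that $\epsilon$ is trivial on $(\zz/p\zz)^\times$, this verifies the hypothesis of Lemma \ref{tame-trace} for each component. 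The lemma then guarantees that $F\circ \tilde T_\psi$ has $\p$-integral Fourier coefficients divisible by $b_\psi=\prod(q-1)$, so that $F\circ T_\psi=(1/b_\psi)F\circ\tilde T_\psi$ is $\p$-integral. This gives the first assertion.

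For $\mathcal{H}_\chi^m(n)=\tilde{H}_\chi(n)^{\text{hol}}\circ U_p^{2m-1}\circ T_\psi$, I recall from the proof of Proposition \ref{ordinary holomorphic proj} that $\tilde{H}_\chi(n)\circ U_p^{2m-1}$ is $\p$-integral for every $m\geq 1$ (since $U_p$ selects Fourier coefficients at indices divisible by $p$, where the coefficients of $\tilde{H}_\chi(n)$ are integral) and that the error $\tilde{H}_\chi(n)\circ U_p^{2m-1}-\tilde{H}_\chi(n)^{\text{hol}}\circ U_p^{2m-1}=\sum_i h_i\circ U_p^{2m-1}$ converges coefficient-wise to zero $p$-adically, because each $h_i$ in the Maass--Shimura decomposition is multiplied by a fixed positive power of $p$ under $U_p$. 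For $m$ sufficiently large, $\tilde{H}_\chi(n)^{\text{hol}}\circ U_p^{2m-1}$ therefore has $\p$-integral Fourier coefficients and (as noted in the paragraph following \eqref{petersson-formula-first}) is of level $N_\psi$. Applying Lemma \ref{tame-trace} exactly as in the first paragraph produces the $\p$-integrality of $\mathcal{H}_\chi^m(n)$ for $m$ large.

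The main obstacle is really a book-keeping one: verifying cleanly that Lemma \ref{tame-trace}, stated for a form with fixed character modulo $N$, applies to the averaged form $\tilde{H}_\chi(n)^{\text{hol}}\circ e$ (and to $\tilde{H}_\chi(n)^{\text{hol}}\circ U_p^{2m-1}$). This is handled by decomposing into isotypic components under the $p$-Sylow $U\subseteq(\zz/N\zz)^\times$ and applying the lemma to each piece; the hypothesis on the character holds for every piece because $p$-power order characters are automatically trivial on $(\zz/p\zz)^\times$, and our running assumption takes care of $\epsilon$ itself. Once this is in place, both assertions follow immediately.
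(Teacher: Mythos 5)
Your argument is correct and follows essentially the same route as the paper, whose entire proof of this proposition is the one-line observation that Lemma \ref{tame-trace} together with the proof of Proposition \ref{ordinary holomorphic proj} gives the result --- precisely the combination you spell out, including the use of $U_p^{2m-1}$ as a $p$-adic approximation to the ordinary projector for the ``$m$ sufficiently large'' statement. One small point of care: Lemma \ref{tame-trace} is stated for a $\p$-integral form with a single nebentype character, whereas your isotypic components for $\epsilon'\epsilon$ need not be individually $\p$-integral; this is harmless, since the integrality-preservation part of the lemma's proof applies verbatim to the integral averaged form (it only uses modularity on $\Gamma_1(a)\cap\Gamma_0(p)$), while the divisibility by $b_\psi$ only uses invariance of each component under the intermediate groups $\Gamma_0(q^{e_q})\cap\Gamma_1(N'_q)$ and is linear in the form, so it passes to the average.
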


In the next section, we shall see how to compute the inner product of $f$ with  $\mathcal{H}_\chi(n)$ and derive integrality properties for the
special values of $D_f(\chi, s)$. 

\subsection{Algebraic and analytic inner products}

Our goal is to use (\ref{petersson-formula}) to describe the imprimitive $p$-adic L-function, and show that it behaves 
well with respect to congruences. We accomplish this by combining
the inner product formula with a certain algebraic incarnation of the Petersson inner product formula due to Hida. 

Let $S_k(N, \mathbb{Z})$ denote the space of modular
forms of weight $k$ with rational integral Fourier coefficients. If $R$ is any ring, set $S_k(N, R)= S_k(\Gamma_1(N), \mathbb{Z})\otimes R$.
 Let $\T$ denote the ring generated by
the Hecke operators $T_q, U_q, U_p, S_n$ acting on $S_k(N, \cO)$, and set $\T(R)=\T\otimes R$. 
 Recall that our convention is that Hecke operators act on the 
\emph{right}.

Then the eigenform $f$ determines a ring homomorphism $\mathbf{T}\rightarrow \mathcal{O}$, sending a Hecke operator
$T\in \mathbf{T}$ to the $T$-eigenvalue of $f$. 
Define $\mathcal{P}_f$ to be the kernel of this homorphism. There is a unique maximal ideal $\mathfrak{m}$ 
of $\mathbf{T}$ that contains $\mathcal{P}_f$ and the maximal ideal of $\mathcal{O}$. 
\par There 
is a canonical duality of $\mathbf{T}_\mathfrak{m}$-modules 
between $S_k(N, \cO)_\mathfrak{m}$ and $\mathbf{T}_\mathfrak{m}$, which we view as a pairing
\[S_k(N, \cO)_\mathfrak{m}\times \mathbf{T}_\mathfrak{m}\rightarrow \cO.\]
This pairing is explicitly given by $ (s, t)\mapsto a(1, s|t)\in \cO$, where
$t\in \mathbf{T}_\mathfrak{m}$ and $s \in S_k(N,\cO)$ is a $\cO$-linear combination of elements in $S_k(N,\Z)$, given by the Fourier expansion $s = \sum a(n, s) q^n$, with $a(n,s)\in \cO$.

\subsection{The definition of periods and the integrality of special values.} 
To proceed further, we assume that the Galois representation associated to $f$ at $\mathfrak{m}$ is irreducible,
ordinary and $p$-distinguished. It is well-known
that under these conditions that  $\mathbf{T}_\mathfrak{m}$ is Gorenstein, and isomorphic as a 
left $\mathbf{T}_\mathfrak{m}$-module to $\text{Hom}(\mathbf{T}_\mathfrak{m}, \cO)$.
A proof may be found in \cite[Theorem 2.1 and Corollary 2 (p. 482)]{wil95}. We fix, once and for all, an isomorphism
$$\T_{\mathfrak{m}}\xrightarrow{\sim} \op{Hom}(\T_{\mathfrak{m}}, \cO)$$
of $\T_{\mathfrak{m}}$-modules. This choice is determined up to a unit factor in $\T_{\mathfrak{m}}$. Composing with the canonical
isomorphism above leads to an isomorphism $\T_{\mathfrak{m}}\cong S_k(N, \cO)_\mathfrak{m}$, as modules over $\T_{\mathfrak{m}}$,
determined up to multiplication by a unit.

\par Thus, the space $S_k(N, \cO)_\mathfrak{m}$ is equipped with both a left and right action of Hecke operators; one coming from the classically defined slash 
action of Hecke operators on modular forms, and the other the abstract left action obtained from the 
choice of Gorenstein isomorphism in the previous paragraph. In fact, the left action of $\T_{\mathfrak{m}}$ on $\op{Hom}(\T_{\mathfrak{m}}, \cO)$ 
coincides with the usual right action of $\T_{\mathfrak{m}}$ on $S_k(N, \cO)_{\mathfrak{m}}$. Indeed, the isomorphism 
$\T_{\mathfrak{m}}\xrightarrow{\sim} \op{Hom}(\T_{\mathfrak{m}}, \cO)$ is an isomorphism of $\T_{\mathfrak{m}}$-modules, 
and both are free of rank 1,
so the two actions are seen to coincide. One deduces that there is a duality pairing
\begin{equation}
\label{integral-pairing}
(\;\cdot, \cdot)_N: S_k(N, \cO)_\mathfrak{m}\times S_k(N,\cO)_\mathfrak{m} \rightarrow \cO,
\end{equation} 
which satisfies the equivariance condition $(f_1\vert t, f_2) = (f_1, f_2\vert t)$.
This is Hida's algebraic inner product (see \cite{hida_1993}, Chapters 7 and 8). 
Unlike the usual Petersson product, it is linear in both variables,
 and the Hecke operators are self-adjoint. Note that this pairing is dependent on the choice of isomorphism 
 $\T_{\mathfrak{m}}\xrightarrow{\sim} \op{Hom}(\T_{\mathfrak{m}}, \cO)\cong S_k(N, \cO)_\mathfrak{m}$.

With this machinery in hand, we return to the case of interest, as set out in Section \ref{depletion definition}. 
Thus we consider a fixed $p$-ordinary, $p$-distinguished,
and $p$-stabilized newform
$g$ of level $M$, and a finite set $S_0$ of primes $q\neq p$, including $q=2$. 
We let $f$ denote the depletion
of $g$ at the primes in $S_0$, and write $N$ for the level of $g$, as determined in Lemma \ref{depletion lemma}.
We have a maximal ideal $\frm$ in the Hecke alegbra $\mathbf{T}_\mathfrak{m}(N, \cO)$ corresponding
to $f$ and the fixed prime $\p\mid p$, to which we may apply the considerations above. 

Consider the function 
\begin{equation}
\label{alg-pairing}
\varphi_f: v \mapsto (f, v)_N,
\end{equation}
 for $v\in S_k(N, \mathcal{O})_\mathfrak{m}$.
  Let $f^{\perp}\subset S_k(N, \mathcal{O})_\mathfrak{m}$ denote the kernel of $\varphi_f$, and let $$\eta_f = (f, f)_N = \varphi_f(f).$$ 
 The quantity $\eta_f = (f, f)_N$ is the famous congruence number of Wiles \cite{wil95}. Indeed, it follows from the calculations above that $\eta_f = \varphi_f(\op{Ann}(\op{ker}(\varphi_f))$, 
  which is  one of the equivalent definitions given by Wiles. 
  
  We would like to say 
 that $\eta_f$ is nonzero.       
  \begin{Lemma}\label{Tm is a field} Let $f, g, S_0, N$ be as in Section  \ref{depletion definition}, 
  so that $f$ is the depletion of $g$ 
at some finite set $S_0$ of primes $q\neq p$ (not necessarily containing $q=2$). 
 Let 
  $\mathcal{P}=\mathcal{P}_f$ denote the kernel
  of the homomorphism $\mathbf{T}_\mathfrak{m}(\oo)\rightarrow \mathcal{O}$ associated to $f$. Then, there is an isomorphism
  $\mathbf{T}_\mathfrak{m}(\oo)_{\mathcal{P}}\simeq K$.
  \end{Lemma}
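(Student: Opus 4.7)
My plan is to reduce the statement to a dimension count on a cuspform space and then resolve it via Atkin--Lehner theory. Since $\mathbf{T}_\mathfrak{m}(\oo)/\mathcal{P}_f\cong\oo$, the residue field at $\mathcal{P}_f$ is $K$; and because $\mathbf{T}_\mathfrak{m}(\oo)$ is finite flat over $\oo$, the ring $\mathbf{T}_\mathfrak{m}(\oo)\otimes_\oo K$ is Artinian and splits as a product of local $K$-algebras, the factor at $\mathcal{P}_f$ being exactly $\mathbf{T}_\mathfrak{m}(\oo)_{\mathcal{P}_f}$. This factor equals $K$ iff it has $K$-dimension one. The chosen Gorenstein isomorphism $\mathbf{T}_\mathfrak{m}\simeq\op{Hom}_\oo(\mathbf{T}_\mathfrak{m},\oo)\simeq S_k(N,\oo)_\mathfrak{m}$, upon tensoring with $K$, identifies $\dim_K \mathbf{T}_\mathfrak{m}(\oo)_{\mathcal{P}_f}$ with the $K$-dimension of the $\mathcal{P}_f$-primary subspace of $S_k(N,K)_\mathfrak{m}$, so the task reduces to showing that this primary subspace is $1$-dimensional.

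For the dimension count, I would first invoke strong multiplicity one, applicable since the residual representation at $\mathfrak{m}$ is irreducible: any eigensystem contributing to the $\mathcal{P}_f$-primary subspace must agree at all $\ell\nmid N$ with that of the newform $g_0$ attached to $g$. Since the $T_\ell$ for $\ell\nmid N$ are Petersson-normal and hence semisimple, the conditions $T_\ell = a_\ell(g_0)$ carve out the full $g_0$-oldform subspace of $S_k(N,K)_\mathfrak{m}$. This space decomposes as a tensor product of local pieces indexed by the primes $q\in S_0\cup\{p\}$ at which $\op{ord}_q(N) > \op{ord}_q(M_0)$; by Lemma \ref{depletion lemma} the excess power of each such $q$ is at most $2$, so each local piece is at most three-dimensional. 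It remains to impose the eigenvalues prescribed by $f$, namely $U_q = 0$ for $q\in S_0$ with $q\ne p$ and $U_p = \alpha_p$ at $p$. A direct computation of the $U_q$-action on the explicit basis $\{g_0(q^j z)\}$, using the Hecke relation $X^2 - a_q(g_0) X + q^{k-1}\epsilon(q) = 0$, shows that in the unramified, ordinary, and depleted cases (added power $q^2, q, q^0$ respectively) the spectrum of $U_q$ on the local piece is $\{\alpha_q,\beta_q,0\}$, $\{\alpha_q,0\}$, and $\{0\}$, so that the prescribed eigenvalue cuts out a $1$-dimensional subspace; the $p$-stabilization argument at $p$ is identical. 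Taking the tensor product of these local $1$-dimensional subspaces yields the desired $1$-dimensional $\mathcal{P}_f$-primary subspace.

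The main obstacle is that the \emph{primary} subspace may a priori be larger than the $f$-eigenspace if $U_q$ fails to be semisimple on a local piece, so one has to verify that $U_q$ has distinct eigenvalues on each local piece. This holds except possibly in the degenerate situation $\alpha_q = \beta_q$. In that borderline case I would verify by hand on the explicit basis that, even if $U_q$ has a Jordan block on the full three-dimensional piece, the generalized eigenspace at the prescribed eigenvalue (namely $0$ in the unramified case, or $\alpha_q$ in the ordinary case) remains $1$-dimensional, so the conclusion goes through in every case. With this proviso the product of the local $1$-dimensional subspaces exhausts the $\mathcal{P}_f$-primary subspace of $S_k(N,K)_\mathfrak{m}$, and $\mathbf{T}_\mathfrak{m}(\oo)_{\mathcal{P}_f}\simeq K$ follows.
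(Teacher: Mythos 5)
Your proposal is correct and takes essentially the same route as the paper's proof: both reduce, via the duality between $\mathbf{T}_\mathfrak{m}$ and $S_k(N,\mathcal{O})_\mathfrak{m}$, to showing that the $g_0$-oldspace at level $N$ cut out by the $U_q$-eigenvalues of $f$ is one-dimensional, computed exactly as you do on the basis $\{g_0(q^jz)\}$ (the paper first imposes the away-from-level eigensystem and then localizes at $\mathfrak{m}$, selecting the unique non-unit eigenvalue prime by prime, rather than your $\mathcal{P}_f$-primary/tensor-product phrasing, but this is cosmetic). One small slip to correct: in your final paragraph the prescribed eigenvalue for an ordinary $q\in S_0$ is $0$ (since $U_qf=0$, as you state earlier), not $\alpha_q$ — harmless, because there $\alpha_q\neq 0$, so both eigenvalues are simple and the zero-eigenspace is one-dimensional in any case.
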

  
  \begin{proof} The proof is elementary, but since the result is important, we write out the details.
   Setting $S:=\mathbf{T}_\mathfrak{m}(\oo)\otimes\Q_p$, we note that $S$ is a finite-dimensional algebra over $K$. Hence, $S$ is
  the product of local rings $R_i$, each of which is finite dimensional over $K$. The ring $R_i$ corresponds to a height one prime ideal 
  of $\mathbf{T}_\mathfrak{m}(\oo)$. The localization of $\mathbf{T}_\mathfrak{m}(\oo)$ at $\mathcal{P}$ is equal to one of the rings $R=R_i$. Thus, we are to show that $R$ is a field. 
 The subalgebra $\mathbf{T}'$ of  $\mathbf{T}_\mathfrak{m}(\oo)\otimes\Q$ generated by the Hecke
  operators prime to the level is semisimple, and hence the product of fields $K_i$, each corresponding to a newform of level $N$. Thus, $K\subset R$ is the image
  of $\mathbf{T}'$ in $R$. Note that the summands $K_i$ are not necessarily in bijection with the summands $R_j$, since $K_i$ could potentially be contained in multiple $R_j$s. We have a homomorphism $\mathbf{T}'\rightarrow K\hookrightarrow R$ with kernel
  $\mathcal{P}'$. By construction, the ideal $\mathcal{P}$ lies above $\mathcal{P}'$.
    
\par Consider the subspace of forms in $S_k(N, \oo)_\mathfrak{m}\otimes\Q_p$ annihilated by the ideal $\mathcal{P}'$. 
  By duality, it suffices to show that this subspace is $1$-dimensional over $K$. To achieve this, recall that 
  the newform associated to $f$ is the form $g_0$ at level $M_0$, which differs from $N$
  only at the prime $p$, and at primes $q\in S = S_0\cup \{p\}$ at which $q$ is either unramified or ordinary.
  Note that if $q\in S_0, q\neq p$, then $U_qf=0$.

  \par The argument follows by adding one prime at a time to the level. Let $q\neq p\in S$, and let
  $N_q = M_0q^{e_q}$, where $e_q=0, 1,2$ depending on whether or not $q$ is depleted, ordinary, or unramified, respectively.
   If $q=p$, let $N_p=M=M_0p$.
  Then consider the space $S_q$ given as follows.
  \begin{itemize}
  \item  If $q=p$, and $g_0$ has level divisible by $p$, then $S_q$ is generated by $g_0=g$. 
  \item If $q=p$, and $g_0$ has level prime to $p$,
  then $S_p$ is spanned by $g_0(z)$ and $g_0(pz)$. 
  \item If $q\neq p$ is ordinary, then $S_q$ is spanned by the  $g_0(z), g_0(qz)$.
  \item If $q\neq p$ and $q$ is unramified, $S_q$ is spanned by \[\{g_0(z), g_0(qz), g_0(q^2z)\}.\]
  \end{itemize}
   Each of these spaces
  is stable under the Hecke operator $U_q$, and is annihilated by $\mathcal{P}'$. The eigenvalues of $U_q$ 
  are given as follows (see \cite{ali}, or \cite{miy89}).
  
  \begin{itemize}
      \item In the first case, $U_p$ has the eigenvalue 
  $\alpha_p$, which is a $\mathfrak{p}$-adic unit.
  \item In the second, the eigenvalues are $\alpha_p$, $\beta_p$, and $\beta_p$ is a non-unit.
  \item In the third, the eigenvalues are $\alpha_q=\sqrt{\epsilon(q)q^{k-2}}$ and $0$.
  \item In the fourth, we have $\alpha_q, \beta_q, 0$, and 
  $\alpha_q\beta_q=q^{k-1}$, so both these numbers are nonzero (and in fact units).
  \end{itemize} 
 We claim that, in each case, the localization of $S_q$ at $\mathfrak{m}$ has dimension $1$. In the case when $q=p$, 
the localization of $S_q$ at $\mathfrak{m}$ is one-dimensional. This is because $U_p\notin\mathfrak{m}$ and one of the two eigenvalues $\alpha_p$ and $\beta_p$ 
is a $p$-adic unit and the other is not. On the other hand, in the case when $q\neq p$,
  $U_q\in \mathfrak{m}$. Since, $U_q$ has a unique non-unit eigenvalue ($q\neq p)$ it follows that the localization of $S_q$ at $\mathfrak{m}$ has dimension $1$.
  \par An iteration of this argument over the primes $q$, using the fact that the level raising operators commute with Hecke operators away from the level, and replacing
  the form $g_0$ with the $1$-dimensional space produced in the previous step, implies that our space is $1$-dimensional. In greater detail, express $S_0$ 
  as $\{q_1,\dots, q_r\}$ and for $m\leq r$, set $S_m:=\{q_1, \dots, q_m\}$ and $N_m$ be the largest divisor of $N$ which is divisible by $M$ and the primes in $S_m$. Assume that the subspace of $S_k(N_m, \cO)_{\mathfrak{m}}\otimes \Q_p$ which is annihilated by $\mathcal{P}'$ is one-dimensional over $K$, and let $g_m(z)$ be a generator of this one-dimensional space. Then, apply the same argument as above to $g_m(z)$ in place of $g_0(z)$, to prove that the subspace of $S_k(N_{m+1}, \cO)_{\mathfrak{m}}\otimes \Q_p$ which is annihilated by $\mathcal{P}'$ is one-dimensional over $K$. This inductive argument completes the proof.
\end{proof}

\begin{Corollary} Let $f, g, S_0, N$ be as in Section  \ref{depletion definition}, so that $f$ is the depletion of $g$ 
at some finite set $S_0$ of primes $q\neq p$, including $q=2$. 
Then the quantity $\eta_f = (f, f)_N$ is is non-zero.
\end{Corollary}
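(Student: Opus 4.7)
The plan is to use Lemma \ref{Tm is a field} to decompose $S_k(N,\cO)_{\mathfrak{m}}\otimes K$ into two pieces, show that $f$ sits in a one-dimensional summand, and conclude that $(f,f)_N$ is the self-pairing of a nonzero vector under a nondegenerate form on a line.

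First I would extend scalars to $K$. Because $\mathbf{T}_{\mathfrak{m}}$ is Gorenstein, the algebraic pairing $(\cdot,\cdot)_N$ is a perfect pairing of $\cO$-modules on $S_k(N,\cO)_{\mathfrak{m}}$, hence a nondegenerate $K$-bilinear pairing on $S_k(N,\cO)_{\mathfrak{m}}\otimes K$. By Lemma \ref{Tm is a field}, the localization of $\mathbf{T}_{\mathfrak{m}}$ at $\mathcal{P}_f$ is the field $K$, so $\mathcal{P}_f$ is a minimal prime and the generic fibre splits as
$$\mathbf{T}_{\mathfrak{m}}\otimes_\cO K \;\cong\; K \times R',$$
with the first factor corresponding to $\mathcal{P}_f$. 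Combining this with the Gorenstein isomorphism $\mathbf{T}_{\mathfrak{m}}\cong S_k(N,\cO)_{\mathfrak{m}}$ gives a parallel decomposition $S_k(N,\cO)_{\mathfrak{m}}\otimes K\cong K\oplus R'$ as $\mathbf{T}_{\mathfrak{m}}\otimes K$-modules. Let $e_f\in\mathbf{T}_{\mathfrak{m}}\otimes K$ denote the idempotent cutting out the first factor.

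Next I would establish two claims. First, the two summands are orthogonal under the pairing: Hecke self-adjointness combined with $e_f(1-e_f)=0$ yields
$(e_f v,(1-e_f)w)_N=(v,e_f(1-e_f)w)_N=0$.
Second, the image of $f$ in $S_k(N,\cO)_{\mathfrak{m}}\otimes K$ lies in the $K$-summand. Indeed, since $f$ is an eigenform with system $\lambda_f$, it is annihilated by $\mathcal{P}_f$; the ideal $\mathcal{P}_f\otimes K$ corresponds to the factor $0\oplus R'$, whose annihilator in $K\oplus R'$ is exactly $K\oplus 0$. As $S_k(N,\cO)_{\mathfrak{m}}$ is $\cO$-torsion-free, $f$ remains nonzero after tensoring with $K$, so its $K$-component is nonzero.

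Putting these together, orthogonality reduces $(f,f)_N$ to the self-pairing of a nonzero vector in the one-dimensional $K$-space $K\cdot e_f$ under the restricted pairing. A bilinear form on a line is either zero or nondegenerate, and nondegeneracy of the extended pairing on the whole $K\oplus R'$ forces nondegeneracy on each summand (again using orthogonality). Hence $(f,f)_N\neq 0$, as required. The main point to verify carefully is the orthogonality of the two summands under the abstract Gorenstein pairing — everything else is a formal consequence of Lemma \ref{Tm is a field} and the Hecke-equivariance of $(\cdot,\cdot)_N$.
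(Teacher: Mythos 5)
Your proposal is correct and follows essentially the same route as the paper's own proof: extend scalars to $K$, use Hecke self-adjointness to get orthogonality of the Hecke-algebra components, invoke Lemma \ref{Tm is a field} to identify the component of $f$ with a line isomorphic to $K$, and conclude nondegeneracy of the self-pairing there. The only cosmetic difference is that you phrase the orthogonality via the idempotent $e_f$ and a decomposition $K\times R'$, while the paper works with the full decomposition $\bigoplus R_i$; the content is identical.
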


\begin{proof} It suffices to prove the corollary upon extending the pairing to $S_k(N, \oo)_\mathfrak{m}\otimes\Q\cong\oplus R_i$. Let $(\cdot, \cdot)_{i,j}$ be the restriction of the pairing $(\cdot , \cdot)_N$ to $R_i\times R_j$. It follows from Hecke-equivariance that this pairing $(\cdot, \cdot)_{i,j}$ is zero if $i\neq j$. Since the pairing $(\cdot, \cdot)_N$ is non-degenerate, it follows that 
\[(\cdot, \cdot)_{i,i}: R_i\times R_i\rightarrow K\] is non-zero. On the other hand, since Lemma \ref{Tm is a field} gives $R_i\simeq K$
 where $i$ is the local component of interest, it follows from $K$-linearity that $(x,x)\neq 0$ for all $x\in R_i$ such that $x\neq 0$. Note that since $f$ is an eigenform, $f$ is nonzero, and hence, we have that $(f,f)_N\neq 0$.
  \end{proof}

 To continue, let $a$ denote a fixed generator of the rank-$1$ $\mathbf{T}_\frak{m}$-module $S_k(N, \mathcal{O})_\mathfrak{m}$,
and consider the number $\varphi_f(a)= (f, a) \in\mathcal{O}$. Then 
any element of $S_k(N, \mathcal{O})_\mathfrak{m}$ is of the form $t\cdot a$ for $t\in \mathbf{T}_\mathfrak{m}$, so the Hecke equivariance of the pairing shows that \[(f, t\cdot a)_N=(f\vert t, a)_N = a(1,  f\vert t)(f, a)_N.\] In particular, it follows that $f^\perp$ is the submodule 
$\mathcal{P}S_k(N, \mathcal{O})_\mathfrak{m}$, and \[S_k(N, \mathcal{O})_\mathfrak{m}/f^\perp\cong (\mathbf{T}_\mathfrak{m}\otimes 
\mathcal{O})/\mathcal{P}(\mathbf{T}_\mathfrak{m}\otimes 
\mathcal{O})\cong \mathcal{O},\]
where $\mathcal{P}$ is the kernel of the canonical homomorphism 
$\mathbf{T}_\mathfrak{m}(\oo)\rightarrow \mathcal{O}$ associated to $f$. Since $(f, f)_N$ is nonzero, we find that 
$f\notin \mathcal{P}S_k(N, \mathcal{O})_\mathfrak{m}$,  and that the function $\varphi_f$ is determined by the nonzero number 
$\eta_f = (f, f)_N\in \mathcal{O}$.

Next we need to compare the algebraic pairing defined above to the usual Petersson inner product. Thus given a modular form $v(z)=\sum a_nq^n\in S_k(N, \mathbf{C})$,
define $v^c(z)=\sum \overline{a}_n q^n$, where the bar denotes complex conjugation. We define a modified Petersson product on $S_k(N, \mathbf{C})$ by setting
\begin{equation}
\label{modified-petersson}
\{v, w\}_N = \langle v\vert W_N, w^c\rangle_N
\end{equation}
where the pairing on the right is the Petersson product (with our chosen normalization). 
One sees from the definition that $\{\cdot, \cdot\}_N$ is $\mathbf{C}$-linear in both 
variables, and that it satisfies $\{v\vert t, w\}_N= \{v, w\vert t\}_N$, for any Hecke operator $t$, just like the algebraic pairing defined above. 

Recall that $\mathcal{O}$ is the completion of the ring of integers of a number field at a prime $\p$ 
corresponding to an embedding in to
$\mathbf{C}_p$. Since we have identified $\mathbf{C}$ with $\mathbf{C}_p$ at the outset, we 
find that the space $S_k(N, \mathcal{O})_\frm$ is equipped with
two $\mathbf{C}_p$-valued pairings $(\cdot, \cdot)_N$ and $\{\cdot, \cdot\}_N$. Each pairing is bilinear, and renders the Hecke operators self-adjoint. 
Just as in the algebraic case, we have a function $\varphi_f^\infty: S_k(N, \mathcal{O})\rightarrow \mathbf{C}_p$ defined by $v \mapsto \{f, v\}_N$,
and the adjointness implies that the kernel of $\varphi_f^\infty$ is the submodule $\mathcal{P}S_k(N, \mathcal{O})_\mathfrak{m}$. Thus we have two different
$\mathbf{C}_p$-valued functions on the rank 1 $\mathcal{O}$-module $S_k(N, \mathcal{O})_\mathfrak{m}/\mathcal{P}S_k(N, \mathcal{O})_\mathfrak{m}$,
and to compare them, it suffices to evaluate on any given element, say on $f$ itself. One is therefore led to consider
$\{f, f\}_N = \langle f\vert W_N, f^c\rangle_N$. It is not clear from the definition that this number
is nonzero; that it is so follows from the same argument that was used in the algebraic case above.

\begin{Definition}
\label{invariant-period}
Define a period associated to $f$ and the level $N$ via $\Omega_{N} = \frac{\{f, f\}_N}{(f, f)_N}$. 
\end{Definition}

\begin{Remark} As defined, the quotient is a ratio of a complex number with a $p$-adic number. One could remedy
this by making the congruence number in the denominator algebraic, by following Hida's original approach from the 1980s 
where the congruence
number is defined in terms of a certain cup product pairing on cohomology, but that would require a substantial digression. 
Our definition presupposes an identification of $\cc_p$ with $\cc$, and that the period depends
(up to unit) upon the choice of isomorphism $\T_{\mathfrak{m}}\xrightarrow{\sim} \op{Hom}(\T_{\mathfrak{m}}, \cO)$.
However, once this choice is made, the period is defined for all Hecke eigenforms of level $N$
whose localization at $\frm$ is non-zero. 
\end{Remark}

A more serious issue is that this definition depends on the level $N$. We would like to claim
that in fact $\Omega_{N}$ is independent of $N$, and depends only on the 
$p$-stabilized newform $g$, up to a $\pp$-adic unit. More precisely, we would 
like to assert that $$\Omega_N = \text{unit}\cdot  \Omega_{M}$$ where $M=M_0p$.
Here $\Omega_M$ is defined by the same prescription as before:
$$\Omega_M =\frac{\{g, g\}_M}{(g, g)_M}$$
where the pairings at level $M$ are derived from the Gorenstein condition and the modified
Petersson product at level $M$. The construction is easier
in this case, since multiplicity one at level $M$ is automatic.

Unfortunately, we cannot quite prove this claim for general weight $k$. 
The case of weight 2 is known, at least under some hypotheses -- this is due
to Diamond, see \cite[Theorem 4.2]{dpnas},
and relies on Ihara's lemma. While there are various
versions of Ihara's lemma known for weight $k> 2$, the specific version
needed here does not seem to be available.

Thus, we will state the precise variant of Ihara's lemma that we need, and make some remarks
about what is known and what is required. 
We will then prove the independence of the period from the 
auxiliary level under the assumption that a suitable Ihara-type lemma holds.

To set the framework, fix a prime 
$p\geq 5$, and consider integers
$A$, $B$ (the levels), together with an auxiliary odd prime $q\neq p$. 
We assume that $A\vert B$, and that one of 
the two following conditions holds:
\begin{enumerate}
    \item $B =q^2A$, and $(A,q)=1$ (the unramified case), or
    \item $B=qA$, and $q\vert A$ (the ordinary case). 
\end{enumerate}
Let $\mathbf{\T}_A$ and $\T_B$ denote the Hecke rings generated by all the Hecke
operators, including $U_q$ as well as $T_q$, at levels $A$ and $B$ respectively. We work
with the groups $\Gamma_1(A)$ and $\Gamma_1(B)$. 
Let $S(A), S(B)$ denote the lattice of cuspforms of levels $A, B$ respectively
whose Fourier coefficients are in $\oo$. Let $S(A, \cc), S(B, \cc)$ denote
the corresponding complex vector spaces. We have Hecke-equivariant
and $\cc$-bilinear and perfect analytic pairings
$\{\cdot, \cdot\}_A:S(A, \cc)\times S(A, \cc)\rightarrow\cc$ and 
and $\{\cdot, \cdot\}_B: S(B, \cc)\times S(B, \cc)\rightarrow\cc$,
defined as above. 
Then let $L_A, L_B$ denote the lattices in $S(A, \cc), S(B, \cc)$ that are
$\oo$-dual to $S_A, S_B$ respectively.  Namely, we have $x\in L_A$ if and only
if $\{x, s\}_A\in\oo$, for all $s\in S_A$, and similarly for $S_B, L_B$. Then
we have $L_A\cong \T_A$ as a $\T_A$-module, and similarly $L_B\cong \T_B$ over
$\T_B$. 

Next, we define a map $\tau:S(A, \cc)\rightarrow S(B,\cc)$, as follows.
Let $h = h(z)\in S(A,\cc)$, where $z$ denotes a variable in the upper half plane.
We define $\tau$ via 
\[\tau\left(h(z)\right) = \begin{cases} h(z) - (U_q h)(qz) &\text{ if }B=Aq,\\
h(z) - (T_q h)(qz) + S_qq^{k-1}h(q^2z) &\text{ if }B=Aq^2.
\end{cases}\]
It is clear that $\tau\left(S(A)\right)\subset S(B)$, and that the image is stable
under $\T_B$. To check the stability under $U_q$, one can calculate explicitly that
$U_q=0$ on the image. Thus $\tau$ is a map that removes the Euler factor at $q$.

Now let $h_A\in S(A)$ be a modular form that is an eigenvector for every 
element $t\in\T_A$. Let $\mathcal{P}_A$ denote the kernel of the homomorphism 
$\phi_A:\T_A\rightarrow \oo$ associated to $h_A$. 
Let $\frm_A$ be the maximal ideal of $\T_A$ corresponding
to the inverse image of the maximal ideal of $\oo$, under $\phi_A$. 
Then, $h_B=\tau(h_A)$ is an eigenvector for $\T_B$ (in fact, with $U_qh_B=0$). 
We may repeat the constructions above for $h_B$, and obtain a height
one prime $\mathcal{P}_B$ and a maximal ideal $\frm_B$ inside $\T_B$.
The ideals $\mathcal{P}_A, \mathcal{P}_B, \frm_A, \frm_B$ are required to satisfy 
additional properties, which we record below:
\begin{itemize}
    \item The localizations of $\T_A, \T_B$ at $\frm_A, \frm_B$ respectively are
    both Gorenstein, and 
    \item The localizations of $\T_A, \T_B$ at $\mathcal{P}_A, \mathcal{P}_B$ are fields isomorphic
    to the fraction field $K$ of $\oo$.
\end{itemize}
We will be applying these considerations to the case that $A$ divides $B$ and $B$ divides $N$, where $N$ is a level derived via
depletion at $S_0$ of a $p$-stabilized newform $g$, as in Section \ref{depletion definition}. Thus the
 maximal ideals we obtain at levels $\frm_A$ and $\frm_B$ are such that
the residual representations at $\frm_A$ and $\frm_B$ are absolutely irreducible and $p$-distinguished (by assumption on $g$). 
It follows from Lemma \ref{Tm is a field} that the second condition will also be satisfied in the case of interest.

With these assumptions in place, we can now state the Ihara-type results that we need.

\begin{hypothesis}
\label{hypothesis ihara} With the conditions and notations above, and any choice 
of $A, B, q$ as above, we have
\begin{itemize}
    \item (Ihara-1) We have $\tau(L_{A, \frm_A})\subset L_{B, \frm_B}$, and
    \item (Ihara-2) $L_{B, \frm_B}/\tau(L_{A, \frm_A})$ is $\oo$-torsion-free.
\end{itemize}
\end{hypothesis}

\begin{Remark}
\label{ihara-comments} 
The lattices $S(A,\cc), S(B, \cc)$ are, by definition, $\oo$-dual to the lattices
of cuspforms of level $A, B$ respectively. It is well-known, in weight 2, that the dual lattices to the space
of integral cuspforms occur in the cohomology of the modular curves $X_1(A), X_1(B)$. 
The map $\tau: S(A,\cc) \rightarrow S(B, \cc)$ is completely
explicit in terms of the usual degeneracy maps of modular curves. Translating the statements
we have written to the language of cohomology gives the familiar Ihara lemma, which was
proven by Wiles in \cite{wil95}, Chapter 2. Wiles's results
were used by Diamond  to prove the Ihara hypothesis as stated here, for the case
$B=q^2A$ (see Theorem of 4.2 \cite{dpnas}). 

The case of weight $k>2$ is a bit more complicated. The cohomology of a modular
curve with coefficients in the Eichler-Shimura module of coefficients may have torsion,
and it is not true in clear how to identify the dual lattice with any simply described
submodule of the cohomology. Furthermore, the duality pairing on cohomology is only defined
with rational coefficients, and does not give any integral duality. However, one can solve these
problems in the ordinary case by using Hida's control theorems to reduce to the case of weight
$2$. For the details of the computation, we refer to the forthcoming thesis of Maletto.
\end{Remark}

%

Now we return to the situation of Definition \ref{invariant-period}. Consider
a $p$-stabilized newform $g$ of level $M$, and the oldform $f$ of level 
$N$ associated to a choice of $S_0$ as before. We want to show that the periods
at level $N$ and $M$ are equal up to a unit. This turns out to be a simple
inductive argument, once Ihara's lemma is known. 

\begin{Lemma} Suppose that the Hypotheses Ihara-1 and Ihara-2 hold, for any
$A, B, q$, with $A\mid B\mid N$.
Then $\Omega_N = u\Omega_M$ for some $p$-adic unit $u$.
\end{Lemma}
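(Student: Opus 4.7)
The plan is to induct on the primes one adds to go from level $M$ to $N$. At each step we are in the setting of Hypothesis \ref{hypothesis ihara}, with an intermediate eigenform $h_A$ at level $A$ and $h_B := \tau(h_A)$ at level $B$, where $B = qA$ or $B = q^2 A$ for some prime $q \ne p$. It suffices to show at each step that $\Omega_B = u \cdot \Omega_A$ for a $\p$-adic unit $u$; iterating yields the lemma.

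Fix such a step $A \to B$. The map $\tau$ commutes with Hecke operators in $\T^{(q)}$ (the Hecke algebra generated by operators away from $q$), and sends the $h_A$-isotypic line to the $h_B$-isotypic line. Pull back the modified Petersson pairing $\{\cdot,\cdot\}_B$ and the algebraic pairing $(\cdot,\cdot)_B$ through $\tau$:
\[
\{v, w\}_B^\tau := \{\tau(v), \tau(w)\}_B, \qquad (v, w)_B^\tau := (\tau(v), \tau(w))_B.
\]
Both pullbacks are $\T_A^{(q)}$-equivariant bilinear pairings on $S(A,\cc)$. Since the space of such pairings, restricted to the one-dimensional $\mathcal{P}_A$-localization produced by Lemma \ref{Tm is a field}, is one-dimensional, each pullback is a scalar multiple of the corresponding original pairing on the $h_A$-component:
\[
\{h_A, h_A\}_B^\tau = c_\infty \cdot \{h_A, h_A\}_A, \qquad (h_A, h_A)_B^\tau = c_{\mathrm{alg}} \cdot (h_A, h_A)_A,
\]
with $c_\infty, c_{\mathrm{alg}} \in K$. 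Then
\[
\Omega_B = \frac{\{h_B, h_B\}_B}{(h_B, h_B)_B} = \frac{c_\infty}{c_{\mathrm{alg}}} \cdot \Omega_A,
\]
so the task reduces to showing $c_\infty / c_{\mathrm{alg}}$ is a $\p$-adic unit.

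The constant $c_\infty$ is computed directly. Using the explicit formula for $\tau$, the factorization of the Atkin-Lehner operator $W_B$ with respect to the decomposition $B = q^e \cdot A$, and the behaviour of the degeneracy map $z \mapsto qz$ under the Petersson pairing, one finds that $c_\infty$ equals an explicit expression in the Hecke eigenvalues $\alpha_q, \beta_q$ of $h_A$ together with factors involving $q$ and $k$; this expression encodes precisely the Euler factor at $q$ that $\tau$ removes when producing $h_B$. The constant $c_{\mathrm{alg}}$ is controlled by Hypothesis \ref{hypothesis ihara}: Ihara-1 ensures that $(\cdot,\cdot)_B^\tau$ takes values in $\oo$ on $L_{A,\frm_A}$, so $c_{\mathrm{alg}} \in \oo$; Ihara-2 (saturation of $\tau(L_{A,\frm_A})$ inside $L_{B,\frm_B}$) implies further that the rank-one $\oo$-module cut out of $L_{B,\frm_B}$ by $\mathcal{P}_B$ is generated by the image under $\tau$ of a generator at level $A$, up to a $\p$-adic unit absorbed into the choice of Gorenstein isomorphism. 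Consequently $c_{\mathrm{alg}}$ differs from $c_\infty$ only by the explicit ``removed Euler factor'' arising from the structure of $\tau$, times a $\p$-adic unit.

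Comparing the two calculations yields $c_\infty = u \cdot c_{\mathrm{alg}}$ for a $\p$-adic unit $u$, whence $\Omega_B = u \cdot \Omega_A$. The main obstacle, and the point at which Hypothesis \ref{hypothesis ihara} enters essentially, is the algebraic step: without the torsion-freeness in Ihara-2 one could not conclude that $c_{\mathrm{alg}}$ matches the explicit analytic Euler factor up to a $\p$-adic unit, and the $\p$-adic valuations of $c_\infty$ and $c_{\mathrm{alg}}$ could diverge from step to step. The analytic computation of $c_\infty$ is in principle routine but somewhat intricate, owing to the interaction of $W_B$ with the level-raising maps; this is where the unramified and ordinary cases of the hypothesis are handled separately.
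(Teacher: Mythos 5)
Your reduction to one prime at a time, and the identity $\Omega_B/\Omega_A = c_\infty/c_{\mathrm{alg}}$ coming from Hecke-equivariance and one-dimensionality of the localization, are fine and match the paper's setup. But the heart of the lemma — that $c_\infty/c_{\mathrm{alg}}$ is a $\p$-adic unit — is not actually proved in your proposal. The analytic constant $c_\infty$ is left uncomputed (and note it is \emph{not} a unit in general: the removed Euler factor at $q$ can be divisible by $\p$, e.g.\ when $\alpha_q\equiv\beta_q\pmod{\p}$, so the observation that Ihara-1 forces integrality of the algebraic side cannot suffice). On the algebraic side the argument is both imprecise and circular: the pairing $(\cdot,\cdot)_B$ is defined on $S_k(B,\oo)_{\frm_B}$, whereas Hypothesis \ref{hypothesis ihara} is a statement about the analytic dual lattices $L_{A,\frm_A}$, $L_{B,\frm_B}$; the dictionary between these two worlds is exactly the period one is trying to control, so "Ihara-2 implies $c_{\mathrm{alg}}$ matches the removed Euler factor up to a unit" is an assertion of the desired conclusion, not a deduction. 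As written it is even internally inconsistent: you say $c_{\mathrm{alg}}$ differs from $c_\infty$ by the removed Euler factor times a unit, and in the next sentence that $c_\infty = u\,c_{\mathrm{alg}}$ with $u$ a unit, which is only compatible if that Euler factor is a unit — precisely what fails in the interesting cases. Also, a non-unit cannot be "absorbed into the choice of Gorenstein isomorphism," which is fixed once and only ambiguous up to units.

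For comparison, the paper's proof needs no explicit computation of either constant. One observes that $\Omega_A$ is characterized, up to unit, by the two properties that $\delta_A:=\Omega_A^{-1}h_A$ lies in $L_{A,\frm_A}$ and that $L_{A,\frm_A}/\oo\delta_A$ is torsion-free (i.e.\ $\delta_A$ generates the saturated eigenline in the dual lattice), and similarly for $\Omega_B$ and $\delta_B$. Then Ihara-1 puts $\tau(\delta_A)$ inside $L_{B,\frm_B}$, giving $u^{-1}\in\oo$ for the scalar $u$ with $\delta_B=u\,\tau(\delta_A)$, and Ihara-2 (torsion-freeness of $L_{B,\frm_B}/\tau(L_{A,\frm_A})$), together with injectivity of $\tau$ and saturation at level $A$, gives $u\in\oo$; hence $u\in\oo^\times$ and $\Omega_A=u\Omega_B$. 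If you want to rescue your route (which is essentially Diamond's weight-two argument of computing both sides against the removed Euler factor), you would have to carry out the Petersson computation of $c_\infty$ and, separately, prove the change of congruence number $(h_B,h_B)_B/(h_A,h_A)_A$ is that same Euler factor up to a unit — a statement that again requires translating Ihara's lemma through the period, so you gain nothing over the lattice argument above.
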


\begin{proof} We start at level $M$, and work our way upwards, adding one
prime at time. To spell out the induction, 
we start with a modular form $h_A$ at level $A$, and we move up
to level $B=Aq$ or $Aq^2$, and replace $h_A$ with the $q$-depleted form $h_B$.
In this situation, we are required to show that the periods of $h_A$ and $h_B$
are equal up to a unit. 

It is clear from the definition of the periods that $\Omega_A$ at level $A$ is 
characterized up to a unit by the properties:
\begin{itemize}
    \item $\delta_A:=\Omega_A^{-1}\cdot h_A$ is contained in $L_{A, \frm_A}$,
    \item $L_{A, \frm_A}/\oo\delta_A$
is torsion-free.
\end{itemize} Similarly, the period $\Omega_B$ at level $B$ is characterized
by:
\begin{itemize}
    \item $\delta_B:=\Omega_B^{-1}\cdot h_B$ is contained in $L_{B, \frm_B}$,
    \item $L_{B, \frm_B}/\oo\delta_B$
is torsion-free.
\end{itemize} Since $\tau(h_A)=h_B$  by definition, Ihara-1 
shows that $\delta_B':=\tau(\delta_A)=\tau(\Omega_A^{-1}h_A)$ is contained in $L_{B, \frm_B}$. 
Let $u$ be such that 
$\delta_B=u \delta_B'$. We show that $u$ is a $p$-adic unit. We have that $\delta_B'\in L_{B, \frm_B}$ 
and $L_{B, \frm_B}/\mathcal{O} \delta_B$ is torsion-free. Hence, $u^{-1}$ is contained in $\mathcal{O}$. According to
Ihara-2, $L_{B, \frm_B}/\tau(L_{A, \frm_A})$ is torsion-free. We have that 
$u^{-1}\delta_B=\delta_B'=\tau(\delta_A)\in \tau(L_{A, \frm_A})$. Hence, 
$\delta_B$ is contained in $\tau(L_{A, \frm_A})$, and we write $\delta_B=\tau(\eta_A)$. 
Since the map $\tau$ is injective, it follows that $u^{-1} \eta_A=\delta_A$. 
Since $L_{A, \frm_A}/\mathcal{O}\delta_A$ is torsion-free, it follows that $u\in \mathcal{O}$. 
We have 
shown that $u\in \cO$ and $u^{-1}\in \cO$, so we have deduced that $u\in \cO^\times$.
Therefore $\Omega_A=u\Omega_B$ for some unit $u$.
\end{proof}

\begin{Remark}
To get a nice formula at the end, and to verify that our final
formulae agree with those in \cite{lz16}, we need to further
calculate further. 
We have shown above that the ratio of the algebraic
and analytic pairings at level $M$ and level $N$ are the same. It remains to express everything in terms of the 
newform $g_0$ associated to $f$ and $g$. In other words, we have to bring everything down to level $M_0$. There are
two cases to consider, depending on whether or not the $p$-stabilized form is new or old at $p$ (so $M=M_0p$ 
or $M=M_0$). 

Start with the case that $g_0$ is old at $p$. Then a further calculation (see Lemme 27 of \cite{pr89}) shows that 
$\{g, g\}_M  = (p-1)\cdot E_p\cdot \langle g_0, g_0\rangle_{M_0}$, where
$E_p = \pm p^{1-k/2} \alpha_p(1-\frac{p^{k-2}}{\alpha_p^2})(1-\frac{p^{k-1}}{\alpha_p^2})$, and 
$g_0$ is the newform of level $M_0$ associated
to $f$, and $\alpha_p$ is the unit root of the Hecke polynomial. The factor of $p-1$ comes from
the fact that the Petersson inner product in \cite{pr89}
is defined as an integral over a fundamental domain for $\Gamma_0$, not $\Gamma_1$. The  factors
$1-\frac{p^{k-2}}{\alpha_p^2}$ and $1-\frac{p^{k-1}}{\alpha_p^2}$ are units for $k>2$.  
When $k=2$, the term $1-1/\alpha_p^2$
may be a non-unit; this is so precisely when $g_0$ is congruent to a $p$-new form of of level $pM_0$.
The number  $1-1/\alpha_p^2$ is the relative congruence number of Ribet \cite{rib83}. 

In the $p$-old situation (and all weights), we define $$(g_0, g_0) = \frac{(g, g)_{M}}{1-1/\alpha_p^2}.$$
Note that $\alpha_p\neq \pm 1$, by the Weil bounds.
We remark that it can be shown that in fact $(g_0, g_0)$ as defined above coincides with the pairing of $g_0$ with 
itself defined via a Gorenstein pairing at level $M_0$ (as opposed to the $p$-stabilized level 
$M=M_0p$). We don't need this result, but mention it simply to justify the notation. 
We refer the reader to \cite{wil95}, Chapter 2, Section 2, for a full discussion of relative congruence numbers in weight 2.

If $f$ is new at $p$ (and hence of weight 2), then of course $\{g, g\}_M = E_p \langle g_0, g_0\rangle_M$
where $E_p=\pm 1$ is the eigenvalue of the Fricke involution.

The number ${\Omega_M}$ is almost the canonical period, but not quite: it 
depends not only on $g_0$, but on the choice of the prime $\pp$, and the stabilization of $g_0$
at the unit root $\alpha_p$ of the Hecke polynomial. We can get rid of this dependency as follows.

 If $f$ is old at $p$, we have 
$$\Omega_M = \frac{\{g, g\}_M}{( g, g)_M} = \text{unit}\cdot E_p \frac{\langle g_0, g_0\rangle_{M_0}}{( g, g)_{M}}=
\text{unit}\cdot p^{1-k/2}\frac{\langle g_0, g_0\rangle_{M_0}}{( g_0, g_0)_{M_0}}.
$$
Note that we have used Proposition 2.4 of \cite{wil95} to account for the non-unit congruence number for $g_0$ in the last step 
of equalities above.

If $f$ is new at $p$, so that $g=g_0$ and $M=M_0$, and we are in weight 2, we have 
$$
\label{periotd2} \Omega_M =  \frac{\{g, g\}_M}{( g, g)_M} =  E_p \frac{\langle g_0, g_0\rangle_{M_0}}{( g_0, g_0)_{M_0}}
=\text{unit} \cdot \frac{\langle g, g\rangle_M}{( g, g)_M}
$$

A common way of expressing the above formulae is by defining
\begin{equation}
\label{can-period}
\Omega_M = \text{unit}\cdot p^{1-k/2}\cdot \frac{\langle g_0, g_0\rangle_{M_0}}{( g_0, g_0)_{M_0}}.
\end{equation}
With this formulation, the only dependence on $\pp$ or $g$ is absorbed in the unit factor. Thus we may simply set 
\begin{equation}
\label{can-period-2}
\Omega_{g_0}^{\op{can}} = \frac{\langle g_0, g_0\rangle_{M_0}}{( g_0, g_0)_{M_0}},
\end{equation}
as stated in the introduction.
\end{Remark}

\begin{Remark} The  
factor $p^{1-k/2}$ which shows up in the comparison with $\Omega_M$
is important -- it shows up in the formulae   (\ref{petersson-formula}), and those of Schmidt in 
\cite{schmidt86}, \cite{schmidt88}, where it is simply carried 
around. As we shall see, it 
 exactly cancels unwanted powers of $p$ arising from (\ref{petersson-formula}). 

\end{Remark}

%
%
%

Now, if $h\in S_k(N,\oo)_\frm$ is arbitrary, then we have $\frac{(f, h)_N}{(f, f)_N}=
\frac{\{f, h\}_N}{\{f, f\}_N} = \frac{\{f, h\}_N}{(f, f)_N\Omega_{N}}$.
In view of the independence of the period on the level, we get the following key evaluation formula, valid for any $h\in S_k(N, \oo)_\frm$:

 \begin{Proposition} 
 \label{evaluation-formula} Assume that the Ihara hypotheses are valid. Then
we have the equalities $(f, h)_N = \frac{\{f, h\}_N}{\Omega_{N}} =\op{unit}\cdot \frac{\{f, h\}_N}{\Omega_{M}}$. 
Further, the quantity $$ \frac{\{f, h\}_N}{\Omega_{M}}=
\op{unit}\cdot  p^{k/2-1}\cdot 
 \frac{\{f, h\}_N\cdot (g_0, g_0)_{M_0}}{\langle g_0, g_0\rangle_{M_0}} $$ is $\pp$-integral.
 \end{Proposition}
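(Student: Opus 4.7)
The plan is to assemble this statement from three earlier ingredients, none of which require new work: the Gorenstein duality machinery, the just-proved Ihara-dependent invariance $\Omega_N = u\Omega_M$, and the explicit formula (can-period) relating $\Omega_M$ to the canonical period.

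First I would establish the key identity $(f,h)_N = \{f,h\}_N/\Omega_N$. The idea is to compare the two linear functionals $\varphi_f : v\mapsto (f,v)_N$ and $\varphi_f^\infty : v\mapsto \{f,v\}_N$ on $S_k(N,\cO)_\frm$. Both are Hecke-equivariant and bilinear, so adjointness of the Hecke action forces each of their kernels to contain $\mathcal{P}\,S_k(N,\cO)_\frm$. Since $S_k(N,\cO)_\frm/\mathcal{P}\,S_k(N,\cO)_\frm \cong \cO$ (via Lemma~\ref{Tm is a field} applied to the quotient), both kernels are exactly $\mathcal{P}\,S_k(N,\cO)_\frm$, and the two functionals descend to maps on a rank-one quotient. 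They therefore differ by a scalar, and evaluating at $f$ identifies that scalar as $\{f,f\}_N/(f,f)_N = \Omega_N$. The non-vanishing of $(f,f)_N$, needed to form the ratio, is exactly the corollary of Lemma~\ref{Tm is a field}.

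Next I would invoke the preceding lemma, which under Hypothesis~\ref{hypothesis ihara} asserts $\Omega_N = u\,\Omega_M$ for some $u\in \cO^\times$. Substituting gives the second equality $(f,h)_N = \op{unit}\cdot \{f,h\}_N/\Omega_M$ immediately. For the explicit form, I would then plug in the formula (can-period), which reads $\Omega_M = \op{unit}\cdot p^{1-k/2}\cdot \langle g_0,g_0\rangle_{M_0}/(g_0,g_0)_{M_0}$; inverting this produces the asserted factor $p^{k/2-1}$ and replaces $\Omega_M$ by the canonical period ratio.

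Finally, integrality is essentially free: because $f,h\in S_k(N,\cO)_\frm$ and Hida's algebraic pairing takes values in $\cO$ by construction (equation \eqref{integral-pairing}), the quantity $(f,h)_N$ lies in $\cO$. The chain of equalities just derived then transfers $\pp$-integrality from $(f,h)_N$ to $\{f,h\}_N/\Omega_M$, proving the last claim. The only step that is not formal is the invariance of the period under auxiliary level-raising, and that was already handled in the preceding lemma; conditional on Hypothesis~\ref{hypothesis ihara}, the present proposition is a bookkeeping exercise, so I do not anticipate a genuine obstacle here. Were one to try to remove the Ihara hypothesis, the main difficulty would be controlling the ratio $\Omega_N/\Omega_M$ at higher weight, where torsion in cohomology and the absence of the weight-two duality prevent a direct integral comparison of the lattices $\tau(L_{A,\frm_A})\subset L_{B,\frm_B}$.
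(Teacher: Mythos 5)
Your proposal is correct and follows essentially the same route as the paper: the paper likewise compares the two Hecke-equivariant functionals $\varphi_f$ and $\varphi_f^\infty$ on the rank-one quotient $S_k(N,\cO)_\frm/\mathcal{P}S_k(N,\cO)_\frm$ to get $(f,h)_N=\{f,h\}_N/\Omega_N$, then invokes the Ihara-dependent lemma $\Omega_N=\op{unit}\cdot\Omega_M$ and the formula \eqref{can-period} for $\Omega_M$, with integrality coming from the $\cO$-valuedness of the Gorenstein pairing. No gaps; this matches the paper's argument.
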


\begin{Remark} Our next task will be to apply the machinery developed above to the case where $h=\mathcal{H}_{\chi}(n)$ is derived from a product 
of a theta
series and and Eisenstein series. However, there are two problems. First, this product is unlikely to be cuspidal, and second, it is not an element 
of $h\in S_k(N, \oo)_\frm$. Some care is  therefore required. The number $\{f, h\}_N = \langle f\circ W_N, h^c\rangle_N$ 
 makes sense for any $h\in M_k(N, \oo)\otimes\qbar$, since $f$ is cuspidal. Since the maximal ideal $\frm$ corresponding to $f$ is residually irreducible,
 we find that if $e_\frm$ is the idempotent in the Hecke algebra $\T=\oplus \T_{\frm_i}$ corresponding
to the maximal ideal $\frm$, then $h\circ e_\frm$ is cuspidal for any modular form $h$ of level $N$ and weight $k$. But now
 $f\circ e_\frm = f$, and the Hecke operators are self-adjoint under
 the modified pairing, hence $\{f, h\circ e_\frm \}_N=\{f, h\}_N$.  
 
Thus we may replace $h$ with $h\circ e_\frm$, and define $( f, h)_N = (f, h\circ e_\frm)_N$ for any $h\in M_k(N, \oo)\otimes\qbar$.
Then the same formalism as above applies. In particular, 
we still have $\frac{(f, h)_N}{(f, f)_N}=
\frac{\{f, h\}_N}{\{f, f\}_N} = \frac{\{f, h\}_N}{(f, f)_N\Omega_{N}}$, and the conclusion of Proposition \ref{evaluation-formula} applies without change.
\end{Remark}

\subsection{Integrality}
In view of the considerations above, we are led to compute the algebraic pairings $( f, \mathcal{H}_\chi(n))_N$ and $(f,  \mathcal{H}^m_\chi(n))_N$, with
$\mathcal{H}_\chi(n)$ and $\mathcal{H}_\chi^m(n)$ being as defined in (\ref{hchi}) and (\ref{hchim}). To use our previous
formulae, we need to assume $2\in S_0$. Then, we know already that 
$\mathcal{H}_\chi(n)$ and $\mathcal{H}_\chi^m(n)$ are integral, hence the corresponding pairings are integral as well. It remains only to
relate them to special values of $L$-functions. The starting point is Proposition \ref{evaluation-formula}, which reduces the calculation
to that of  analytic pairings $\{f, \mathcal{H}_\chi(n)\}_N$ and $\{f,  \mathcal{H}^m_\chi(n)\}_N$.  Pick any $m\geq m_\chi$.
%
The analytic pairing is computed in 
equation (\ref{petersson-formula-gamma1}), which states that
\begin{equation*}
\frac{\Gamma(n/2)}{(4\pi)^{-n/2}}p^{(2m_\chi-1)(k/2-1)} D_f(\chi,n) =   \op{unit}\cdot \alpha_p^{2(m_\chi-m)} \langle f\circ W_N, H_{\overline\chi}(n)\circ W_{N_\chi}\circ U_p^{2m-1}\circ T_\psi\rangle_{N}.
\end{equation*}

Using this formula, and plugging in all the definitions, we obtain
\begin{Corollary} 
\label{petersson-formula-2}
Suppose that $\eta$ is ramified and $m\geq m_\chi$ is any integer. 
There exists a $\pp$-adic unit $u$ depending only on $n$ and $k$ and $\epsilon$ and $\psi$ such 
that we have $$u\cdot \frac{p^{1-k/2}}{\pi^{n}}\left( \frac{p^{n-1}}{\psi(p)}\right)^{m_\chi} \left(\frac{1}{\alpha_p^2}\right)^{m_\chi-m}
\cdot g(\ol{\eta})\cdot D_f(\chi,n) =  
 \{ f, \mathcal{H}^m_\chi(n)\}_N.$$ where $\mathcal{H}_\chi^m(n) = \tilde{H}_\chi(n)^{\text{hol}}\circ U_p^{2m-1}\circ T_\psi\in M_k(N, \oo)\otimes\qbar$
 has $\p$-integral coefficients, and 
 $\tilde{H}_\chi(n)= \frac{\Gamma((n+1)/2)}{\pi^{(1+n)/2}} p^{m_\chi(3-2k+2n)/2} \cdot \frac{\sqrt{c_\psi p^{m_\chi}}}{g(\chi)} \cdot H_{\chi}(n)\circ W_{N_\chi}$.

\end{Corollary}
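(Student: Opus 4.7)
The proof is essentially an exercise in unwinding definitions and bookkeeping the scalars accumulated in equation \eqref{petersson-formula-gamma1}, the definition of $\tilde H_\chi(n)$, and the conversion from the Petersson pairing $\langle\cdot,\cdot\rangle_N$ to the modified pairing $\{\cdot,\cdot\}_N$. The plan is as follows.

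First I would start from the level-$N$ formula \eqref{petersson-formula-gamma1}, which already expresses $\frac{\Gamma(n/2)}{(4\pi)^{n/2}}\,p^{(2m_\chi-1)(k/2-1)}\,D_f(\chi,n)$ as $\alpha_p^{2(m_\chi-m)}$ times the Petersson pairing $\langle f\circ W_N,\ H_\chi(n)\circ W_{N_\chi}\circ U_p^{2m-1}\circ T_\psi\rangle_N$, up to a $\mathfrak p$-adic unit depending only on $\psi$ and $\epsilon$. Because $f$ is a holomorphic cusp form, its Petersson pairing against any nearly holomorphic form agrees with its pairing against the holomorphic projection of that form (the non-holomorphic terms are images of Maass--Shimura differential operators, which are orthogonal to cusp forms). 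Hence $H_\chi(n)\circ W_{N_\chi}$ can be replaced by $(H_\chi(n)\circ W_{N_\chi})^{\mathrm{hol}}$, and the subsequent action of $U_p^{2m-1}\circ T_\psi$ commutes past this replacement since it is given by matrices.

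Next I would substitute the definition of $\tilde H_\chi(n)$ to rewrite $H_\chi(n)\circ W_{N_\chi}$ as a scalar multiple of $\tilde H_\chi(n)$. This produces the factor $\frac{\pi^{(n+1)/2}}{\Gamma((n+1)/2)}\cdot p^{-m_\chi(3-2k+2n)/2}\cdot \frac{g(\chi)}{\sqrt{c_\psi p^{m_\chi}}}$ on the pairing side, and brings $\mathcal H_\chi^m(n)=\tilde H_\chi(n)^{\mathrm{hol}}\circ U_p^{2m-1}\circ T_\psi$ into the pairing. Using the definition $\{v,w\}_N=\langle v\circ W_N, w^c\rangle_N$ together with the fact that complex conjugation on the Fourier coefficients exchanges $\chi$ and $\overline\chi$ in the theta/Eisenstein construction, I would convert $\langle f\circ W_N,\mathcal H_\chi^m(n)\rangle_N$ into $\{f,\mathcal H^m_\chi(n)\}_N$, absorbing the effect of conjugation into a $\mathfrak p$-adic unit (depending only on $\psi,\epsilon,n,k$).

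Finally I would collect all the numerical factors. The Legendre duplication formula $\Gamma(n/2)\Gamma((n+1)/2)=\sqrt\pi\,2^{1-n}\Gamma(n)$ merges the two Gamma factors; the powers of $\pi$ telescope to $\pi^{-n}$; the powers of $2$ and $i$ (suppressed everywhere) are units. The powers of $p$ from $(2m_\chi-1)(k/2-1)$, from $m_\chi(3-2k+2n)/2$, and from $\sqrt{c_\psi p^{m_\chi}}$ combine to give $p^{1-k/2}\cdot p^{(n-1)m_\chi}$. Writing $g(\chi)=\psi(c_\eta)\eta(c_\psi)g(\psi)g(\eta)$ (valid since $(c_\psi,c_\eta)=1$) and using $g(\eta)\,g(\overline\eta)=\eta(-1)c_\eta=p^{m_\chi}$ (as $\eta$ is even and of conductor $p^{m_\chi}$), the $g(\chi)$ from the substitution rearranges to $g(\overline\eta)^{-1}\cdot p^{m_\chi}\cdot\psi(p)^{-m_\chi}\cdot(\text{unit depending only on }\psi)$; moving $g(\overline\eta)$ to the left-hand side produces the claimed factor $g(\overline\eta)\cdot D_f(\chi,n)$, and the remaining $p^{m_\chi}\psi(p)^{-m_\chi}$ combines with the earlier $p^{(n-1)m_\chi}$ to give $\bigl(p^{n-1}/\psi(p)\bigr)^{m_\chi}$. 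The factor $\alpha_p^{2(m_\chi-m)}$ already appears in \eqref{petersson-formula-gamma1} and is carried through verbatim.

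The main obstacle is the bookkeeping itself, particularly isolating the portion of the constant that depends only on the fixed data $(n,k,\epsilon,\psi)$ from the portion that depends on $\eta$ (equivalently on $m_\chi$). The key observation that makes this clean is that $\psi$ is fixed throughout and that all $\eta$-dependence can be routed through the single term $g(\overline\eta)$ and the explicit factors $(p^{n-1}/\psi(p))^{m_\chi}$ and $\alpha_p^{2(m_\chi-m)}$; everything else, including Gauss sums of $\psi$, the value $c_\psi^{1/2}$, the unit from the Ihara-normalization in Proposition \ref{evaluation-formula}, and the scalar arising from complex conjugation, is collected into the single $\mathfrak p$-adic unit $u$.
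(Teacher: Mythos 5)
Your proposal is correct and is essentially the paper's own argument: a direct unwinding of \eqref{petersson-formula-gamma1} and of the definition of $\tilde{H}_\chi(n)$, using the Legendre duplication formula for the $\Gamma$-factors, the factorization $g(\chi)=\psi(p^{m_\chi})\eta(c_\psi)g(\psi)g(\eta)$ together with $g(\eta)g(\ol{\eta})=p^{m_\chi}$, and the normalization \eqref{modified-petersson} of the modified pairing (which is where the $\chi$ versus $\ol{\chi}$ conjugation is absorbed), with all quantities prime to $\pp$ collected into the unit $u$. Your running tally of the $p$-powers has two small slips that offset each other (the three sources you list actually give $p^{1-k/2+nm_\chi}$, and the Gauss-sum step contributes $p^{-m_\chi}\psi(p)^{-m_\chi}g(\ol{\eta})$ on the $D_f$ side rather than $p^{+m_\chi}$), but the final factor $p^{1-k/2}\left(p^{n-1}/\psi(p)\right)^{m_\chi}g(\ol{\eta})$ you arrive at is exactly right.
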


\begin{proof} This is a direct computation, using Lemmas, \ref{eisen-fourier} and \ref{theta-fourier} 
and applying the doubling formula for the $\Gamma$-function in the formula for the coefficients $d_{j,\nu}$; see also Lemma 4.2 of \cite{schmidt86}. 
One has also to use the factorization $g(\chi) = \psi(p^{m_\chi})\eta(c_\psi)g(\psi)g(\eta)$, as well as the normalization of the modified pairing 
\eqref{modified-petersson}.
The constant $u$ collects up all the various powers of $2, i$, and other quantities prime to $\pp$. 
\end{proof}

Observe that the formula above contains the nuisance factor $p^{k/2-1}$, which also appears in our period. 
Assuming Hypothesis \ref{hypothesis ihara}, so that $\Omega_{g_0}^{\op{can}}= \text{unit}\cdot p^{k/2-1}\Omega_N$, 
where $N$ is the level of the depleted form $f$,
and plugging in (\ref{can-period}), 
 we find that 
 \begin{align}
  \label{integrality-formula-1}
 (f, \mathcal{H}_\chi^m(n))_N  & =  u\cdot p^{1-k/2}\cdot \left( \frac{p^{n-1}}{\psi(p)}\right)^{m_\chi}  \left(\frac{1}{\alpha_p^2}\right)^{m_\chi-m}
\cdot \Gamma(n)\cdot G(\ol{\eta})\cdot \frac{D_f(\chi,n)}{\pi^n \Omega_N}  \\
& =u' \cdot \left( \frac{p^{n-1}}{\psi(p)}\right)^{m_\chi} \cdot  \left(\frac{1}{\alpha_p^2}\right)^{m_\chi-m}
\cdot \Gamma(n) \cdot G(\ol{\eta})\cdot \frac{D_f(\chi,n)}{\Omega_{g_0}^{\op{can}}}\\
& =u' \cdot \left( \frac{p^{n-1}}{\psi(p)}\right)^{m_\chi} \cdot  \left(\frac{1}{\alpha_p^2}\right)^{m_\chi-m}
\cdot \Gamma(n) \cdot G(\ol{\eta})\cdot\frac{(g_0, g_0)_{M_0}}{\pi^n \langle g_0, g_0\rangle_{M_0}}\cdot D_f(\chi,n).
\end{align}
Here $u'$ is some other unit, independent of $\chi$. 

Finally, we have to deal with $\mathcal{H}_\chi(n)\circ e$. It is not hard to see that the twisted trace operator $T_\psi$, which goes from level $N_\psi$ to
level $N$, commutes with the Hecke operator $U_p$, since $N_\psi/N$ has no common factor with $p$. There does not seem to be any particularly
pleasant way to deduce this fact from a classical perspective where the trace operator is given by matrices with rational integer entries,
but it is more or
less obvious from the point of view of representation theory, since the local trace involves primes away from $p$, while $U_p$ is concentrated at $p$. 
It is also evident it one considers modular forms as functions on test objects on moduli spaces of enhanced elliptic curves  -- $U_p$ is a sum over certain
subgroups of order $p$, while the trace from level $N_\psi$ involves subgroups of order prime to $p$. Anyway, we take this fact for granted, so that if $m$
is large, we have
$$\mathcal{H}_\chi^m(n) = \tilde{H}_\chi(n)^{\text{hol}}\circ U_p^{2m-1}\circ T_\psi =\tilde{H}_\chi(n)^{\text{hol}}\circ T_\psi\circ U_p^{2m-1}.$$
We may then consider an suitable increasing sequence of integers $m$, divisible by $p-1$, so that the forms $\mathcal{H}_\chi^m(n) \circ U_p$ converge to 
$\mathcal{H}_\chi(n)\circ e$.  The algebraic inner product is linear,
and we conclude that 
\begin{equation}
\label{integrality-formula-2}
 (f, \mathcal{H}_\chi(n)\circ e)_N  =\op{unit} \cdot \left( \frac{p^{n-1}}{\psi(p)\alpha_p^2}\right)^{m_\chi} 
\cdot \Gamma(n) \cdot G(\ol{\eta})\cdot\frac{(g_0, g_0)_{M_0}}{\pi^n \langle g_0, g_0\rangle_{M_0}}\cdot D_f(\chi,n).
\end{equation} In particular, the right-hand side is integral. Observe that the quantity on the right is a constant multiple of  
the one appearing in the definition of the
$\psi$-twisted $p$-adic L-function given in (\ref{interpolation}). The Euler factor disappears because $\eta$ is ramified at $p$.

\subsection{Level raising and congruences}
\label{congruence-section}
In view of the construction given above, it is more or less clear that that the $p$-adic L-functions satisfy
good congruences. To state the result, consider two $\mathfrak{p}$-ordinary and $\mathfrak{p}$-stabilized
newforms $g_1, g_2$, which are such that the residual 
representations $\overline{\rho}_{g_1}$ and $\overline\rho_{g_2}$ are isomorphic to some fixed representation
$\ol\rho$. We assume, as always, that $\ol\rho$ is irreducible, ordinary, and distinguished. 
We write $M_i$ for the level of $g_i$, and $\epsilon_i$ for the corresponding nebentype character. 
The considerations in the previous paragraphs give an integral construction of imprimitive $p$-adic L-functions
for each $g_i$, associated to some choice of a set $S_0$ of primes $q\neq p, 2\in S_0$, which depends on $i$.
We now show that we can choose a set for $S_0$ such that depletion of $g_1$ and $g_2$ yields
 a common level $N$ where semisimplicity is retained and we can apply our construction.

Recall that $\ol\rho$ denotes the common  2-dimensional residual representation for the $g_i$. 
Let $q$ denote a prime number, $q\neq p$. In line with our previous convention, we shall say that $\ol\rho$ is ordinary
at $q$ if the subspace $\ol\rho^{I_q}$ of invariants in $\ol\rho$ under an inertia group $I_q$ at $q$ has dimension $1$. If
$\ol\rho^{I_q}=0$, we say that $\ol\rho$ is depleted at $q$, and if $\ol\rho^{I_q}=\ol\rho$, we say that $\ol\rho$ is unramfied.
Since each $g_i$ is a lift of $\ol\rho$, it follows that each $\rho_{g_i}$ is depleted at $q$ if the common representation
$\ol\rho$ is so. If $\ol\rho$ is ordinary, then each $\rho_{g_i}$ is either depleted or ordinary. Let $\ol{M}$
denote the conductor of $\ol\rho$, as defined on page 104 of \cite{gouvea90}.

Our first task is to determine the possible difference between the levels $M_1$ and $M_2$. The answer is given by Proposition 6
of \cite{gouvea90}, which we paraphrase as follows:

\begin{Lemma}
\label{gouvea90} For each prime $q$ we have $\ord_q(M_1)=\ord_q(M_2)=\ord_q(\ol{M})$, unless one of the following occurs:
\begin{itemize}
\item Either $\ol\rho$ is unramfied, in which  case, $\ord_q(M_i)\leq 2$, for $i=1,2$; or 
\item $\ol\rho$ is ordinary, in which case $\ord_q(M_i)\leq \ord_q(\ol{M})+1$, for $i=1, 2$. If
 $\ord_q(M_i)=  \ord_q(\ol{M})+1$, then $\rho_{g_i}$ is depleted at $q$. If
 $\ord_q(M_i)=  \ord_q(\ol{M})$, then $\rho_{g_i}$ is ordinary.
 \end{itemize}
 \end{Lemma}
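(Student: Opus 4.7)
The plan is to reduce the claim to a local computation at each prime $q$, and for each $q$ to compare the conductor exponents of the lift $\rho_{g_i}$ and the residual representation $\overline{\rho}$ on the inertia subgroup.

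For the prime $q = p$, there is nothing to do: by our running assumption both $g_1$ and $g_2$ are $p$-stabilized newforms, so $\ord_p(M_1) = \ord_p(M_2) = 1$. For a prime $q \neq p$ I would use the standard conductor formula
\[
\ord_q(M_i) \;=\; 2 - \dim_{K} V_{g_i}^{I_q} + \op{Sw}_q(\rho_{g_i}),
\]
and its analogue for $\overline{\rho}$, where $V_{g_i} = T_{g_i}\otimes K$ and $\op{Sw}_q$ denotes the Swan conductor. The key point is that the wild inertia $P_q$ is a pro-$q$ group, so its image under $\rho_{g_i}$ is a finite $q$-group; since the kernel of $\op{GL}_2(\cO)\to \op{GL}_2(k)$ is pro-$p$ and $q\neq p$, the mod $\p$ reduction map is injective on $\rho_{g_i}(P_q)$. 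Consequently the representation of wild inertia -- and therefore the Swan conductor -- is the same for $\rho_{g_i}$ and for $\overline{\rho}$. Subtracting the two conductor formulas gives the identity
\[
\ord_q(M_i) - \ord_q(\overline{M}) \;=\; \dim_{k}\overline{V}^{I_q} - \dim_{K} V_{g_i}^{I_q},
\]
and noting that $V_{g_i}^{I_q}$ is a saturated $\cO$-submodule of $V_{g_i}$ whose reduction embeds in $\overline{V}^{I_q}$ shows that the right-hand side is non-negative.

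The rest is a short case analysis on $d := \dim_k \overline{V}^{I_q} \in \{0,1,2\}$. If $d = 0$ then $\dim V_{g_i}^{I_q}=0$ also, so $\ord_q(M_i) = \ord_q(\overline{M})$, recovering the first conclusion (for such $q$ both $\overline{\rho}$ and $\rho_{g_i}$ are depleted). If $d = 1$ (the ordinary case), then $\dim V_{g_i}^{I_q}$ is $0$ or $1$; the latter gives $\ord_q(M_i) = \ord_q(\overline{M})$ with $\rho_{g_i}$ ordinary, while the former gives $\ord_q(M_i) = \ord_q(\overline{M}) + 1$ with $\rho_{g_i}$ depleted. If $d = 2$ (the unramified case) then $\ord_q(\overline{M}) = 0$ and $\ord_q(M_i) = 2 - \dim_{K} V_{g_i}^{I_q} \leq 2$.

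The main obstacle, and really the only step which is not purely formal, is verifying the stability of the wild inertia image under mod $\p$ reduction when $q \neq p$; this is standard but must be invoked carefully because it is exactly what decouples the wild and tame contributions to the conductor and allows the lift-to-residual comparison. Everything else is linear algebra of inertia invariants combined with the explicit conductor formula, and no further input about the forms $g_i$ beyond their residual isomorphism is required.
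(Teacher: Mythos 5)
Your argument is correct, but it takes a different route from the paper: the paper does not prove this lemma at all, it simply quotes Proposition 6 of \cite{gouvea90} and paraphrases the statement, whereas you reconstruct the underlying local argument directly. Your proof is essentially the standard one behind Gouv\^ea's result: at $q\neq p$ compare the Artin conductor exponents of $\rho_{g_i}$ and $\ol\rho$, note that $\rho_{g_i}(P_q)$ has order prime to $p$ so that reduction mod $\p$ is injective on it and the Swan terms agree, and then the difference of exponents is $\dim_{\F}\ol{V}^{I_q}-\dim_K V_{g_i}^{I_q}\geq 0$ by saturation of $T_{g_i}^{I_q}$; the case analysis on $\dim\ol{V}^{I_q}\in\{0,1,2\}$ then reproduces exactly the depleted/ordinary/unramified trichotomy, including the finer statements in the ordinary case. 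Two points should be made explicit. First, the identification $\ord_q(M_i)=2-\dim_K V_{g_i}^{I_q}+\op{Sw}_q(\rho_{g_i})$ for $q\neq p$ is not a definition but local--global compatibility for the newform $g_{0,i}$ (Carayol), and it should be invoked by name. Second, the prime $q=p$ is genuinely outside this comparison, since the residual conductor $\ol{M}$ of \cite{gouvea90} is prime to $p$; your observation that $\ord_p(M_1)=\ord_p(M_2)=1$ for $p$-stabilized forms is the right remark, and it suffices because the lemma is only applied in the paper to primes $q\in S_0$, which excludes $p$. With those caveats, your self-contained proof buys independence from the reference, at the cost of rehearsing standard ramification-theoretic facts that the paper prefers to outsource.
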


It is therefore clear that if we take $S_0$ to be any finite set primes containing $2$, as well as 
 all primes dividing $M_1M_2$, plus any any finite set of primes away from the levels,
 then the considerations of the previous 
paragraphs apply. The corresponding depletions have the same level $N$, where
$\op{ord}_q(N)=\op{ord}_q\ol{M}$, except at primes
$q\in S_0$ where $\ol{M}$ is unramified, in which case $\ord_q(N)=2$, and primes $q\in S_0$ where $\ol{M}$ is ordinary, 
in which case $\ord_q(N)=\ord_q(\ol{M})+1$. 

 We can now state the theorems around congruences for the imprimitive symmetric square L-function, but we need to 
 recall all the hypotheses and notation.
Thus, suppose that $\pp$ is a prime
 of $\qbar$ with residue characteristic $p\geq 5$, and that $g_1, g_2$ are $p$-stabilized and $\p$-ordinary
 newforms of weight $k$, and 
 level $M_1, M_2$ respectively, such that  the the Fourier
 coefficients $a(q, g_i)$ satisfy the congruence $a(q, g_1)\equiv a(q, g_2)\pmod{\pp}$, for each prime 
 $q\nmid M_1M_2p$. Assume that the corresponding residual representations are absolutely irreducible and $p$-distinguished,
and the the nebentype characters are trivial on $(\Z/p\Z)^*$. 
Let $\alpha_{i,p}$ denote the eigenvalue under $U_p$ of $g_i$.
  Let $\psi$ denote an even character of conductor prime to $p$, such that $(\psi\epsilon_i)^2\not\equiv 1\pmod{\pp}$.
  and let $\eta$ denote a nontrivial
 Dirichlet character of $p$-power conductor, and set $\chi=\psi\eta$.
 Let $\Omega_1, \Omega_2$ denote the canonical periods associated
 to  $g_1, g_2$ and the prime $\pp$, as above. Thus 
 $\Omega_i = p^{k/2-1} \frac{\langle g_{0,i}, g_{0,i} \rangle_{M_{0,i}}}{(g_{0,i}, g_{0,i})_{M_{0,i}}}$,
 where $g_{0, i}$ is the newform of level $M_{0,i}$ corresponding to $g_i$. 
  Let $n$ denote an odd integer in the range $1\leq n\leq k$. Finally, fix a finite set $S,_0$ 
of primes $q\neq p$ containing $2$ and all primes dividing $M_1M_2$. 
  Let $f_1, f_2$ denote the depleted forms of level $N$, associated to the forms $g_1, g_2$, and the set
  $S_0$. 
    
 \begin{Th}\label{special values congruence} Let the hypotheses  and notation be as above. 
 Then there exist units $u_i$, independent of $\chi$, such that we have the congruence 
 \begin{equation}
 \begin{split}
 u_1 \cdot \left( \frac{p^{n-1}}{\psi(p)\alpha_{1,p}^2}\right)^{m_\chi} 
\Gamma(n)G(\ol{\eta})\cdot \frac{D_{f_1}(\chi,n)}{\pi^n\Omega_1}\\ \equiv
u_2 \cdot \left( \frac{p^{n-1}}{\psi(p)\alpha_{2,p}^2}\right)^{m_\chi} 
 \Gamma(n) G(\ol{\eta})\frac{ D_{f_2}(\chi,n)}{\pi^n\Omega_2} \pmod{\pp}.
 \end{split}
\end{equation}
  \end{Th}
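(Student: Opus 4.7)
The plan is to deduce the congruence directly from the integral evaluation formula \eqref{integrality-formula-2} applied separately to $f_1$ and $f_2$, after establishing a single mod-$\pp$ congruence between the algebraic Hecke pairings on the left-hand side. By Lemma \ref{gouvea90} and the discussion immediately following it, the prescribed set $S_0$ forces the depletions $f_1, f_2$ to share a common level $N$. Since $\ol\rho_{g_1}\cong\ol\rho_{g_2}\cong\ol\rho$, the two forms cut out the same residually irreducible maximal ideal $\mathfrak{m}\subset\mathbf{T}(N,\mathcal{O})$, so I can work inside the single Hecke module $S_k(N,\mathcal{O})_{\mathfrak{m}}$ equipped with the algebraic pairing $(\,\cdot\,,\,\cdot\,)_N$ of \eqref{integral-pairing}.

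First I would check that $f_1\equiv f_2\pmod{\pp}$ as elements of $S_k(N,\mathcal{O})_{\mathfrak{m}}$. This is immediate from the $q$-expansion principle: since $a(n,g_1)\equiv a(n,g_2)\pmod{\pp}$ for every $n$ coprime to $M_1M_2p$, and depletion simply discards the Fourier coefficients indexed by $n$ sharing a prime factor with $S_0$ on both sides, we obtain $a(n,f_1)\equiv a(n,f_2)\pmod{\pp}$ for every $n$. Next I would verify that the auxiliary kernel $\mathcal{H}_\chi(n)\circ e$, though built from the Eisenstein series $\Phi^{\epsilon_i}$ and so nominally depending on $i$, has a mod-$\pp$ reduction independent of $i$. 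This rests on $\epsilon_1\equiv\epsilon_2\pmod{\pp}$ (forced by the equality of residual determinants together with $\det\rho_{g_i}=\epsilon_i\chi_{\mathrm{cyc}}^{k-1}$ and the common weight $k$), combined with the explicit Fourier expansion of Lemma \ref{eisen-fourier}, in which the only $\epsilon_i$-dependence enters through Gauss sums and Kubota--Leopoldt values $L(n+1-k,\omega_\nu)$; the standing hypothesis $(\psi\epsilon_i)^2\not\equiv 1\pmod{\pp}$ keeps those $L$-values $\pp$-integral and congruent across $i$. The denominator-free operations of ordinary projection, $U_p$-power, and the twisted trace $T_\psi$ all preserve this mod-$\pp$ equality by Corollary \ref{first-integrality-formula}, Proposition \ref{ordinary holomorphic proj}, and Lemma \ref{tame-trace}.

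With these two ingredients in hand, the $\mathcal{O}$-bilinearity of the algebraic pairing -- via the identity $(f_1,h_1)_N-(f_2,h_2)_N=(f_1-f_2,h_1)_N+(f_2,h_1-h_2)_N$ -- yields
\[
(f_1,\mathcal{H}_\chi(n)\circ e)_N\equiv(f_2,\mathcal{H}_\chi(n)\circ e)_N\pmod{\pp}.
\]
Applying Proposition \ref{evaluation-formula} and the evaluation identity \eqref{integrality-formula-2} to each side rewrites this as precisely the congruence asserted in the theorem, with the units $u_i$ absorbing the auxiliary powers of $2$ and $i$, the choice of Gorenstein normalization, and the ambiguity in the periods $\Omega_i$, all of which are $\pp$-adic units independent of $\chi$.

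The hardest step will be the second -- tracking the nebentype dependence of $\mathcal{H}_\chi(n)\circ e$ through the ordinary projector $e$ and confirming that the resulting form is genuinely $i$-independent modulo $\pp$. The delicate point is that $\tilde H_\chi(n)$ is $\pp$-integral only at Fourier indices $\nu$ with $p\mid\nu$ by Corollary \ref{first-integrality-formula}; full $\pp$-integrality is gained only after composition with a sufficiently high power of $U_p$ or with $e$. Verifying that this integralization process is compatible with the congruence $\epsilon_1\equiv\epsilon_2\pmod{\pp}$ -- in particular, that the limiting $p$-adic behaviour of the Kubota--Leopoldt $L$-values entering the constant terms of the Eisenstein series remains congruent under $\epsilon_1\mapsto\epsilon_2$ -- is the subtle heart of the argument, and is precisely where the joint work done in Section~2.6 on the period comparison (via Hypothesis \ref{hypothesis ihara}) is consumed.
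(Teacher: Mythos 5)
Your argument is in substance the paper's own proof: the paper disposes of the theorem in one line by noting that $x\mapsto (x,\mathcal{H}_\chi(n)\circ e)_N$ is a single $\cO$-valued, $\cO$-linear functional on $S_k(N,\cO)$, that $f_1\equiv f_2\pmod{\pp}$ coefficientwise, and that its values at $f_1,f_2$ are computed by \eqref{integrality-formula-2}. (One small slip on your side: passing from congruence of $a(n,g_i)$ for $n$ prime to $M_1M_2p$ to congruence of \emph{all} coefficients of the depletions also uses $\alpha_{1,p}\equiv\alpha_{2,p}\pmod{\pp}$; this holds because both forms are $\p$-ordinary and $\p$-distinguished with $\bar\rho_{g_1}\cong\bar\rho_{g_2}$, and since $k$ is even, $(p-1)\nmid(k-1)$, so the unramified quotient character at $p$ is unique.) The real point of comparison is the step you flag as the ``subtle heart'': in the paper's construction that step is vacuous. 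The kernel $H_\chi(n)$ of \eqref{gamma1-kernel} is an average of $\theta_{\overline\chi}\Phi^{\epsilon'\epsilon}$ over \emph{all} characters $\epsilon'$ of the $p$-Sylow subgroup $U$ of $(\Z/N\Z)^\times$, so it depends on $\epsilon$ only through its class modulo $p$-power-order characters; and $\epsilon_1\epsilon_2^{-1}$ is precisely such a character, since it is trivial mod $\pp$ and reduction is injective on prime-to-$p$ roots of unity. Hence $\mathcal{H}_\chi(n)\circ e$ is \emph{literally the same} modular form for $g_1$ and $g_2$, there is one functional rather than two congruent ones, and no congruence of Kubota--Leopoldt values needs to be tracked through the holomorphic/ordinary projection; your proposed detour, while probably salvageable, is both harder and unnecessary. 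Relatedly, Hypothesis \ref{hypothesis ihara} is not consumed where you say it is: it enters only through Proposition \ref{evaluation-formula} and \eqref{integrality-formula-2}, i.e.\ in replacing the level-$N$ period $\Omega_N$ by the canonical periods $\Omega_i$ appearing in the statement (without it the congruence still holds with the level-$N$ periods, as the paper remarks after Proposition \ref{p-adic LFs congruent}); it plays no role in the kernel.
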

 
 \begin{proof} This follows from the linearity of the functional  $S_k(N, \oo)\rightarrow \oo $ given by $ x \mapsto 
 ( x, \mathcal{H}_\chi(n)\circ e)_N$. 
 \end{proof}

\subsection{The primitive L-function and $p$-adic interpolation}
\label{primitive-ss-Lfunction}
We now write down the relationships between the primitive and variously imprimitive L-functions, and 
the interpolation properties that characterize the $p$-adic L-functions. We also have to remove
 the hypothesis $2\in S_0$ which we imposed for the purposes of our calculation of special values. 
For notational
simplicity, let us fix the
newform $g_0$, and write the level of $g_0$ as $M_0$. The corresponding $p$-stabilized newform will be denoted by $g$
and its level shall be denoted by $M$.
For each prime $q$, we have a complex
representation $\pi_q$ of $\op{GL}_2(\Q_q)$ 
associated to $g$. 
The first task is to work out the Euler factors of 
the symmetric square lift $\Pi_q$ of $\pi_q$ to $\op{GL}_3$. This is all contained in \cite{GJ78}, 
and is recapitulated in Section 1 of \cite{schmidt88},
especially Lemmas 1.5 and 1.6,
but some translation is required. We notice first of all
that the representations $\Pi$ and $\Sigma$ considered by Schmidt
are not exactly the symmetric square of \cite{lz16}, and that his 
normalization introduces an inverse when comparing
with the Euler product of Shimura considered here. The exact 
relationship is given in the last line of page 603 of \cite{schmidt88}. For us, the
point is that the Euler factors of our $D_g(\chi_0, s)$ coincide
with Schmidt's $L(s-k+1, \Sigma\otimes\chi^{-1})$ for any 
primitive (in our case even) Dirichlet
character $\chi_0$ with corresponding idele class character $\chi$,
at almost all primes. With this normalization in mind, one can
read off the Euler factors for the automorphic
representation $\Pi$ from Lemmas 1.5 and 1.6 of \cite{schmidt88}. 

To state 
the result, let us write $${L}(r_{g}\otimes\chi, s)=\prod_q
P_q(\chi, q^{-s})^{-1}$$ to denote the complex L-function
associated to the Galois representation $r_{g}\otimes\chi$, as in the introduction. 
Now let $S_0$ be any finite set of primes $q\neq p$. We no longer require
$2\in S_0$. 
Then the imprimitive $L$-function is defined by the same formula, except with the product
taken over primes $q\notin S_0$. 
Recall also that we have are identifying Galois characters and Dirichlet characters
via $\chi(\frob(q))=\o\chi(q)$, and $\frob(q)$ is normalized so that $\frob(q)$ lifts $x\mapsto x^q$.

For all but finitely primes $q$, we have 
$$P_q(\chi, q^{-s})= \left( (1-\chi(q)\alpha_q\beta_q q^{-s})(1-\chi(q)\beta_q^2q^{-s})(1-\chi(q)\alpha_q^2q^{-s})\right).$$
If the set $S_0$ is sufficiently large, and $\chi$ is ramified at $p$, then $D_f(\chi, s) = L_{S_0}(r_g\otimes\chi, s)$, and so we get
$$D_f(\chi, s) = D_g(\chi, s)\cdot \prod_{q} P'_q(\chi, q^{-s}) = L(r_{g}\otimes\chi, s)\cdot \prod_{q} P_q(\chi, q^{-s})=L_{S_0}(r_g\otimes\chi, s).$$
where once again the $P'_q$ and $P_q$ are polynomials in the variables $X=q^{-s}$
 (compare \cite{lz16}, Proposition 2.1.5). The polynomials $P_q$ are those given by Schmidt, whereas the $P'_q$ are the ones
 given by the Euler factor at $q\in S_0$ in Shimura's Euler product. 
 Each of the products is taken over the finite set of primes
in $S_0$, together with the primes of ramification of $r_g\otimes\chi$, with the understanding that some of the factors may be trivial.

We may view the polynomials $P_q$ (for any character) as elements of the Iwasawa 
algebra $\Lambda = \cO[[\text{Gal}(\Q_{\op{cyc}}/\Q)]]$
by replacing $X$ with a Frobenius element at $q$. In particular, we may do so
for the polynomials $P_q = P_q(\psi_t)$, where $\psi$ is our fixed even character of conductor prime to $p$.
 We remark that, under the identification of $\cO[[\text{Gal}(\Q_{\op{cyc}}/\Q)]]$
with the subgroup of $1+p\zz_p\subset \zz_p^\times$, we have $\frob(q) = q_w = \eta_1^{-1}(q)q$, as in the introduction. 

In the setup of $p$-adic L-functions, we have $\chi = \psi\eta$, where 
$\eta$ has $p$-power 
conductor. Furthermore, we have $\psi\eta =\psi_t\eta_w$, for some even $t, 0\leq t\leq p-2$, and some character $\eta_w$
of $p$-power order. A glance at the formulae on pages 604--605 in \cite{schmidt88} shows that in fact
one has 
\begin{equation}
\label{local-factor}
\eta_{w, n}(P_q(\psi_t)) = P_q(\psi_{t+n-1}\eta_w, q^{-n}),
\end{equation}
consistent with the basic formula (\ref{interpolation}).

Following \cite{lz16}, the primitive $p$-adic L-function associated to the newform $g_0$ of level $M$ 
and the representation
$r_g\otimes\psi_t$ is an element $\mathscr{L}^{\op{an}}(r_g\otimes\psi_t)$ of $\Lambda$
characterized by 
\begin{equation}
\label{messy-definition}
\eta_{w, n}(\mathscr{L}^{\op{an}}(r_g\otimes\psi_t)) = \Gamma(n)
\cdot E_p(n, \chi)\cdot G(\eta_{1-t-n}\eta_w^{-1})\cdot \frac{{L}(r_g\otimes \psi_{t+n-1}\eta_w, n)}{\pi^{n}\Omega^{\op{can}}_{g_0}}.
\end{equation}
for $n$ odd, $1\leq n\leq k-1$. The Euler factor
$E_p(n, \chi)$ is given by
$$E_p(n,\chi) = (p^{n-1}\psi(p)^{-1}\alpha_p^{-2})^{m_\chi}$$
if $\eta$ is nontrivial and has conductor $p^{m_\chi}> 1$. If $\eta$ is trivial and $g_0$ has level prime to $p$,
then 
$$E_p(n, \eta) = (1- p^{n-1}\psi(p)^{-1}\alpha_p^{-2})(1-\psi(p)p^{k-1-n})
(1-\psi(p)\beta_p^2p^{-n}).$$
A similar formula holds when $k=2$ and $g$ has level divisible by the first power of $p$; we omit it here, as we
do not need it. We remark
also that the construction of this paper does not prove that the $p$-adic L-function exists.

Observe
that our formula above is the same as that in \cite{lz16}, except that 
\begin{itemize}
\item We have used the canonical period rather than the Petersson product.
\item We have suppressed unit factors of $2$ and $i$ that depend only on the weight.
\item We have used our parity assumptions to get rid of various minus signs.
\item We have adjusted the action of the Iwasawa algebra to match the Selmer group
defined by Greenberg.
\end{itemize}

It is clear
that if such a function exists, then it is characterized by the validity
of the formula above, for any infinite collection of characters of the form
$\eta_{w,n}$. If $S_0$ denote any finite set of prime numbers $q\neq p$ (which may not contain $q=2$),
then $\mathscr{L}_{S_0}^{\op{an}}(r_g\otimes\psi_t)$ (assuming it exists)
is an element of $\Lambda$ 
characterized by the analogue of (\ref{messy-definition}).
%

The existence of an L-function interpolating the values of $D_g(\chi, s)$ was proven
 by Schmidt \cite{schmidt86}, \cite{schmidt88}.  
He states in his work that very similar results were obtained by Hida, but never
published. He (Schmidt) subsequently proved the existence of the primitive 
L-function $\mathscr{L}^{\op{an}}(r_g\otimes\psi_t)$ under some hypotheses, which were later removed by Hida \cite{hid90} and
Dabrowski-Delbourgo \cite{dd}. Schmidt did not 
construct an interpolation of the imprimitive $D_f(\chi, s)$, or the imprimitive $\mathscr{L}^{\op{an}}_{S_0}(r_g\otimes\psi_t)$,
but in fact those follow easily:
one simply multiplies the existing L-function by the appropriate Euler factors, 
each of which is represented by an element $P_1(X)$ or $P'_q(X)$ of $\Lambda$. 

As we have already remarked, this construction of imprimitive $p$-adic L-functions does not help in proving congruences,
since we cannot compare the L-functions for different forms in any way: each one is obtained from
a construction at a different minimal level, which are not related in any simple way.

In any case, we see from equation (\ref{local-factor}) that the relationship between
the primitive and imprimitive $p$-adic L-functions is given by
\begin{equation}
\label{prim-imprim}
\mathscr{L}^{\op{an}}_{S_0}(r_g\otimes\psi_t) = \prod_{q\in S_0} P_q(\psi_t)\cdot \mathscr{L}^{\op{an}}(r_g\otimes\psi_t).
\end{equation}
A priori, both L-functions above are elements of $\Lambda\otimes\mathbb{Q}$. A similar formula of course holds
for the $p$-adic L-function interpolating $D_f(\chi, s)$ and $D_g(\chi, s)$. 

We can now give the proof of the various remaining results on $p$-adic L-functions
stated in the introduction. We start with a simple lemma.
\begin{Lemma}
For any prime $q\in S_0$, the elements $P_q, P'_q \in\Lambda$ have $\mu$-invariant zero. 
\end{Lemma}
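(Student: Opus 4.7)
The plan is to show that the reductions of $P_q$ and $P'_q$ modulo $\mathfrak{p}\Lambda$ are nonzero, which is precisely the meaning of $\mu=0$. I would start by noting that $P_q(\chi, X)$ and $P'_q(\chi, X)$ are polynomials in $X$ with coefficients in $\mathcal{O}$ and constant term $1$ (as standardly normalized Euler factors); in particular, their reductions $\overline{P}_q, \overline{P}'_q \in \mathbb{F}[X]$ are nonzero polynomials, where $\mathbb{F}$ denotes the residue field of $\mathcal{O}$.

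Next, I would fix a topological generator $\gamma$ of $\op{Gal}(\Q_{\op{cyc}}/\Q) \cong 1 + p\Z_p$ and the usual identification $\Lambda \cong \mathcal{O}[[T]]$ given by $\gamma \mapsto 1 + T$, so that $\Lambda/\mathfrak{p}\Lambda \cong \mathbb{F}[[T]]$. Under this, the element $\frob(q) = q_w \in \Lambda$ corresponds to $(1+T)^c$, where $c \in \Z_p$ is defined by $q_w = \gamma^c$. Thus the reduction of $P_q(\chi, \frob(q))$ is $\overline{P}_q(\chi, (1+T)^c) \in \mathbb{F}[[T]]$, and what remains is to verify this is nonzero.

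The key input is that $c \neq 0$. Since $q_w = q/\eta_1(q)$, having $q_w = 1$ would force the integer prime $q$ to coincide with its Teichm\"uller lift $\eta_1(q) \in \mu_{p-1}(\Z_p)$, placing $q$ in $\mu_{p-1}(\Z_p) \cap \Z = \{\pm 1\}$, which is impossible. Once $c \neq 0$, a quick computation using $(1+T)^p = 1 + T^p$ in characteristic $p$ shows that $(1+T)^c - 1 \in \mathbb{F}[[T]]$ has $T$-adic valuation exactly $p^{v_p(c)}$; in particular it is transcendental over $\mathbb{F}$, so $\mathbb{F}[(1+T)^c]$ sits inside $\mathbb{F}[[T]]$ as a polynomial subring. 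Hence any nonzero polynomial evaluated at $(1+T)^c$ is nonzero in $\mathbb{F}[[T]]$, and in particular $\overline{P}_q(\chi, (1+T)^c) \neq 0$. The same reasoning applies verbatim to $P'_q$. There is no genuine obstacle; the subtle point worth flagging is that $v_p(c)$ can be arbitrarily large when $q$ is congruent to $1$ modulo high powers of $p$, but the transcendence argument is insensitive to this.
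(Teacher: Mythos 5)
Your proof is correct and follows essentially the same route as the paper, whose argument is simply that the Euler-factor polynomials $P_q$ and $P'_q$ have constant term $1$ (citing Schmidt's explicit formulae, respectively Shimura's defining Euler product), so their reductions modulo $\p$ are nonzero. You merely make explicit the details the paper leaves implicit, namely the substitution $X\mapsto \frob(q)=(1+T)^c$, the fact that $c\neq 0$ because a rational prime cannot be a root of unity in $\Z_p$, and the transcendence of $(1+T)^c$ over the residue field, all of which are accurate.
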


\begin{proof} The statement for $P_q$ follows from the explicit
formulae for the Euler factors in \cite{schmidt88}, or the observation there on page 605
that the polynomials $P_q$ all satisfy $P(0)=1$. The same argument works for $P'_q$, since these
are given explicitly in Shimura's defining Euler product.
\end{proof}

\begin{Corollary} We have $\mu_{S_0}^{\op{an}}=0\iff \mu^{\op{an}}=0$.
\end{Corollary}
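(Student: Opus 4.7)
The plan is to read off the corollary directly from the factorization~\eqref{prim-imprim} together with the preceding lemma. By Theorem~\ref{integrality-thm-intro}, both $\mathscr{L}^{\op{an}}(r_g\otimes\psi_t)$ and $\mathscr{L}^{\op{an}}_{S_0}(r_g\otimes\psi_t)$ actually lie in $\Lambda$, so their $\mu$-invariants are well-defined nonnegative integers (and not merely elements of $\Lambda\otimes\Q$).

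Next I would invoke additivity of the $\mu$-invariant with respect to products in $\Lambda$: if $F,G\in\Lambda$, then $\mu(FG)=\mu(F)+\mu(G)$, which is immediate from the Weierstrass preparation theorem, or equivalently from the fact that reduction modulo $\p$ is a ring homomorphism $\Lambda\to\F[[T]]$ and $\mu$ measures the $\p$-adic valuation of this reduction. Applying this to \eqref{prim-imprim} gives
\begin{equation*}
\mu^{\op{an}}_{S_0}(r_g\otimes\psi_t) \;=\; \mu^{\op{an}}(r_g\otimes\psi_t) \;+\; \sum_{q\in S_0}\mu\bigl(P_q(\psi_t)\bigr).
\end{equation*}
By the preceding lemma, each term $\mu(P_q(\psi_t))$ vanishes, so the two $\mu$-invariants coincide, and in particular one is zero if and only if the other is. The main (indeed the only) obstacle is a conceptual one: one must know that the product formula \eqref{prim-imprim}, which is a priori an identity in $\Lambda\otimes\Q$, actually makes sense as an identity in $\Lambda$; this is guaranteed by Theorem~\ref{integrality-thm-intro}, which is the real content, and so the corollary itself is just a bookkeeping statement. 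Note also that the identical argument applies with $P_q$ replaced by $P'_q$, giving the analogous statement for the interpolation of $D_f(\chi,s)$ versus $D_g(\chi,s)$.
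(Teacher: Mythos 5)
Your argument is correct and is essentially the paper's own: the corollary is read off from the factorization \eqref{prim-imprim} together with the preceding lemma that each $P_q(\psi_t)$ has $\mu$-invariant zero, using additivity of $\mu$ under products in $\Lambda$. The only cosmetic difference is your appeal to Theorem \ref{integrality-thm-intro}; in the paper integrality is established just afterwards (using this same lemma), and it is not strictly needed here since $\mu$ is additive already in $\Lambda\otimes\Q$, but invoking it causes no circularity and does no harm.
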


This lemma implies Proposition \ref{analytic-invariants-intro},
simply by taking $g=g_i$, and $\sigma^{(q)}_i$ to be the degree of the polynomial $P_q$ 
associated above to $g_i$ at $q$, and using the fact that the 
$\mu$-invariant of $P_q$ is zero
in the formula (\ref{prim-imprim}).

Next we deal with integrality properties, as stated in Theorem \ref{integrality-thm-intro}.
 We claim that in fact both $\mathscr{L}^{\op{an}}_{S_0}(r_g\otimes\psi) $ and $\mathscr{L}^{\op{an}}(r_g\otimes\psi)$
lie in $\Lambda$ and are integral, with the same canonical period. 
First we apply our construction to the $p$-stabilized newform $g$ and large set $S_0$ 
containing $2$ and all the bad primes. 
Then since $D_f(\chi, s) = L_{S_0}(r_g\otimes\chi, s)$ for sufficiently large $S_0$,
it follows from the formulae (\ref{integrality-formula-1})
and (\ref{integrality-formula-2}) that the quantity in 
the interpolation formula  (\ref{interpolation}) is integral, for almost all characters $\eta_n$. 
The 
Weierstrass preparation theorem, applied
to $\mathscr{L}^{\op{an}}_{S_0}(r_g\otimes\psi)$, shows that the latter is an element
of $\Lambda$. But now we have 
$\mathscr{L}^{\op{an}}_{S_0}(r_g\otimes\psi) = \mathscr{L}^{\op{an}}(r_g\otimes\psi) \cdot \prod P_q$,
for integral polynomials $P_q$ with $\mu$-invariant zero, so $\mathscr{L}^{\op{an}}(r_g\otimes\psi) $
is integral because the ring of power series with coefficients in a field is an integral domain. 
We may now obtain the result for any given set $S_0$, simply by multiplying by the factors $P_q(X), q\in S_0$. 
This proves Theorem \ref{integrality-thm-intro}.

Finally, we have to deal with congruences. Let $g_1, g_2$ be $p$-congruent newforms satisfying our running conditions.
Let $S$ denote
any set of primes including $2$, and the set of primes dividing $M_1M_2$, and let
$S_0=S\backslash\{p\}$ be as above.  Furthemore, we have to assume that the Ihara hypotheses hold. We claim that we have

\begin{Proposition}\label{p-adic LFs congruent} Let the notation be as above. Then we have
 $\mathscr{L}_{S_0}^{\op{an}}(r_{g_1}\otimes\psi_t) \equiv  u 
 \mathscr{L}_{S_0}^{\op{an}}(r_{g_2}\otimes\psi_t)\pmod{\pp}$, where $u$ is a $p$-adic unit and the congruence
 is that of elements in the completed group algebra $\oo[[\zz_p]]]$. 
 \end{Proposition}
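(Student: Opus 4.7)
The strategy is to reduce the congruence of power series to the congruence of special values established in Theorem \ref{special values congruence}, using a Weierstrass-type density argument.

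First, by Theorem \ref{integrality-thm-intro}, both imprimitive $p$-adic L-functions lie in $\Lambda$. Let $u_1,u_2\in\cO^\times$ be the units from Theorem \ref{special values congruence}, and set $u := u_2/u_1 \in \cO^\times$. I would then form
\[
F \;:=\; \mathscr{L}^{\op{an}}_{S_0}(r_{g_1}\otimes\psi_t) \;-\; u\cdot\mathscr{L}^{\op{an}}_{S_0}(r_{g_2}\otimes\psi_t)\;\in\;\Lambda,
\]
and the proposition is equivalent to showing $F\in\p\Lambda$.

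Next, I would fix one odd $n$ with $1\leq n\leq k-1$ and let $\eta_w$ run over nontrivial characters of $\gal(\Q_{\op{cyc}}/\Q)$ of $p$-power order. Because $S_0$ contains every prime dividing $M_1M_2$ (and $q=2$), and because $\chi=\psi_{t+n-1}\eta_w$ is ramified at $p$, the identification $L_{S_0}(r_{g_i}\otimes\chi,n)=D_{f_i}(\chi,n)$ holds; hence the interpolation formula \eqref{interpolation} expresses $\eta_{w,n}\bigl(\mathscr{L}^{\op{an}}_{S_0}(r_{g_i}\otimes\psi_t)\bigr)$ as exactly the quantity appearing on the left-hand side of Theorem \ref{special values congruence} (the Euler factor $E_p(n,\eta_w)$ being precisely $(p^{n-1}\psi(p)^{-1}\alpha_{i,p}^{-2})^{m_\chi}$). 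Since the units $u_i$ are independent of $\chi$, Theorem \ref{special values congruence} then yields
\[
\eta_{w,n}(F)\;\equiv\;0\pmod{\p\overline{\cO}}
\]
for every such $\eta_w$.

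Finally, I would deduce $F\in\p\Lambda$ by Weierstrass preparation. Write $F=\varpi^{m}P(T)U(T)$ with $\varpi$ a uniformizer of $\cO$, $P$ distinguished of degree $\lambda$, and $U$ a unit of $\cO[[T]]$, so that, normalizing the valuation on $\overline{\Q}_p$ by $v(p)=1$, one has $v(F(T_0))=m/e_\p+\lambda\,v(T_0)$ for $|T_0|<1$, where $e_\p$ is the ramification of $\p$ over $p$. Taking $T_0=\eta_{w,n}(\gamma)-1$ for a topological generator $\gamma$ of $1+p\Z_p$ and $\eta_w$ of order $p^r$, a direct computation gives $v(T_0)=1/(p^{r-1}(p-1))\to 0$ as $r\to\infty$. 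The congruence condition forces $v(F(T_0))\geq 1/e_\p$ for every such $r$, which can only hold if $m\geq 1$; hence $F\in\p\Lambda$, as required.

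The main obstacle is really the bookkeeping in the middle step: one has to verify that all the constants appearing in the interpolation formula \eqref{interpolation} match, factor by factor, those appearing in Corollary \ref{petersson-formula-2} and formula \eqref{integrality-formula-2}, and in particular that the units $u_i$ produced by Theorem \ref{special values congruence} really are independent of $\eta_w$. Once this is checked, the remaining Weierstrass step is essentially routine.
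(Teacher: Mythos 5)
Your proposal follows exactly the paper's route: the paper's own proof of this proposition is the one-line observation that it follows from the special-value congruence of Theorem \ref{special values congruence} together with the Weierstrass preparation theorem, and your argument is precisely that deduction with the details (the interpolation bookkeeping and the valuation estimate at characters of large conductor) written out. The only caveat is that the congruence of special values must be read as divisibility of the difference by $\varpi$ in the relevant ring of integers---which is indeed what the linearity proof of Theorem \ref{special values congruence} provides---rather than mere positivity of valuation, and your step ``$v(F(T_0))\geq 1/e_\p$'' uses it in exactly this (correct) way.
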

 
 \begin{proof}  
This follows from the congruence of special values in Theorem \ref{special values congruence}, and the Weierstrass preparation theorem.
   \end{proof}

We remark that the result in Theorem \ref{special values congruence}, and the statement of the theorem above,
 remain valid without the Ihara hypothesis, if replaces the canonical periods with the periods $\Omega_{g_i, N}$. 
However, $N$ depends on the set $S_0$. 

Finally, we observe that the analytic part of Theorem \ref{intro-thm} follows immediately, since two congruent Iwasawa functions with 
$\mu$-invariant zero necessarily have the same $\lambda$-invariant, and that if one has $\mu$-invariant zero, then so does the other.

\section{Imprimitive Iwasawa Invariants: the algebraic side}\label{s 3} We start by recalling the notation. From the previous
sections. Throughout, let $p\geq 5$ be a fixed prime and $g$ be a normalized Hecke-eigencuspform of weight $k\geq 2$ on the congruence group $\Gamma_0(M)$. Denote the number field generated by the field of Fourier coefficients of $g$ by $L$. For each prime $q$, choose an embedding $\iota_q:\bar{\Q}\hookrightarrow \bar{\Q}_q$. Let $\p|p$ be the prime of $L$ such that the inclusion of $L$ in $L_\p$ is compatible with $\iota_p$. Denote by $K$ the completion of $L$ at $\p$, and $\mathcal{O}$ the valuation ring of $K$ with uniformizer $\varpi$. Associated with $g$ is the continuous Galois representation $\rho_g:\op{Gal}(\bar{\Q}/\Q)\rightarrow \op{GL}_2(K)$. Let $\op{V}_g\simeq K^2$ be the underlying $2$-dimensional vector space on which $\op{Gal}(\bar{\Q}/\Q)$ acts via $K$-linear automorphisms. Fix a Galois stable $\cO$-lattice $\op{T}_g$ inside $\op{V}_g$. Let $\F$ be the residue field of $\cO$. The mod-$\p$ reduction of $\rho_g$ is denoted by
\[\bar{\rho}_g:\op{G}_{\Q}\rightarrow \op{GL}_2(\F),\]and it follows from the Brauer-Nesbitt theorem that the semi-simplification of $\bar{\rho}_g$ is independent of the choice of lattice $\op{T}_g$.
Throughout, we make the following assumptions on $g$:
\begin{enumerate}
    \item $g$ is ordinary and $p$-distinguished at $\p$,
    \item $\bar{\rho}_g$ is absolutely irreducible.
\end{enumerate}
Since $\bar{\rho}_g$ is absolutely irreducible, the choice of Galois stable lattice $\op{T}_g$ is unique.
Letting $\op{G}_q$ denote $\op{Gal}(\bar{\Q}_q/\Q_q)$, we note that the choice of embedding $\iota_q$ gives an inclusion of $\op{G}_q$ into $\op{G}_{\Q}$. Let $\chi_{\op{cyc}}:\op{G}_p\rightarrow \cO^\times$ denote the $p$-adic cyclotomic character. Since $g$ is ordinary at $\p$, there is a short exact sequence 
\[0\rightarrow \op{T}_g^+\rightarrow \op{T}_g\rightarrow \op{T}_g^-\rightarrow 0\] of $\op{G}_p$-stable $\mathcal{O}$-lattices such that there are unramified characters $\gamma_1, \gamma_2:\op{G}_{p}\rightarrow \mathcal{O}^{\times}$ for which
\[\op{T}_g^+\simeq \mathcal{O}(\chi_{\op{cyc}}^{k-1} \gamma_1) \text{ and } \op{T}_g^-\simeq \mathcal{O}(\gamma_2).\]Fix a finite order even character $\psi$ of conductor $c_\psi$ coprime $Mp$. Let $t$ be an even integer in the range $0\leq t\leq p-2$ and recall that $\psi_t$ denotes the Dirichlet character $\psi\eta_1^t$, defined in the introduction. Consider the lattice $\bfT_g:=\op{Sym}^2 \op{T}_g$ and the symmetric square representation 
\[r_g\otimes \psi_t:=\op{Sym}^2(\rho_g)\otimes \psi_t:\op{Gal}(\bar{\Q}/\Q)\rightarrow \op{GL}_3(\mathcal{O}).\]
Set $\bfV_g:=\bfT_g\otimes \Q_p$ and $\bfA_g:=\bfV_g/\bfT_g$.
The representation $\bfT_g$ is $\p$-ordinary, i.e., is equipped with a filtration of $\op{G}_p$-modules
\[\bfT_{g}=\mathcal{F}^0(\bfT_g)\supset \mathcal{F}^1(\bfT_g)\supset \mathcal{F}^2(\bfT_g)\supset \mathcal{F}^3(\bfT_g)=0.\] For $j=1,2$, and unramified characters $\delta_j$, we have that \[\begin{split}&\op{gr}_0(\bfT_g)\simeq \mathcal{O}(\chi_{\op{cyc}}^{2k-2}\delta_0),\\
&\op{gr}_1(\bfT_g)\simeq \mathcal{O}(\chi_{\op{cyc}}^{k-1}\delta_1),\\
& \op{gr}_2(\bfT_g)\simeq \mathcal{O}(\delta_2).
\end{split}\]

\par With this notation in place, we consider Hecke $\pp$-stabilized eigencuspforms $g_i$ of weight $k\geq 2$, level $M_i$,
and character $\epsilon_i$, as in the introduction.  
 Setting $L$ to be the number field generated by the Fourier coefficients of $g_1$ and $g_2$, let $\mathfrak{p}$ be the prime of $L$ above $p$ corresponding to the choice of $\iota_p$.  Let $\psi$ denote a Dirichlet character of conductor coprime to $ p$ such that 
 $(\psi\epsilon_i)^2\not\equiv 1\pmod{\pp}$.
 Assume that $\bar{\rho}_{g_i}$ is absolutely irreducible and that the following equivalent conditions are satisfied.
\begin{enumerate}
    \item The residual representations are isomorphic: $\bar{\rho}_{g_1}\simeq \bar{\rho}_{g_2}$.
    \item For all primes $q\neq p$ coprime to the level of $g_1$ and $g_2$, the Fourier coefficients satisfy the congruence
    \[a(q,g_1)\equiv a(q,g_2)\mod{\varpi}.\]
\end{enumerate}
Note that $\bfT_{g_i}$ fits into a short exact sequence 
\[0\rightarrow \bfT_{g_i}^+\rightarrow \bfT_{g_i}\rightarrow \bfT_{g_i}^-\rightarrow 0,\]where $\bfT_{g_i}^+=\mathcal{F}^1(\bfT_{g_i})$ and $\bfT_{g_i}^-=\bfT_{g_i}/\bfT_{g_i}^+$. Set $\bfA_i$ (resp. $\bfA_i^{\pm}$) to denote the $p$-divisible Galois module $\bfT_{g_i}\otimes \Q_p/\Z_p$ (resp. $\bfT_{g_i}^{\pm}\otimes \Q_p/\Z_p$). Note that $\bfA_i\simeq (K/\mathcal{O})^d$, where $d=3$. Let $d^{\pm}$ be the dimensions (over $K$) of the $\pm$ eigenspaces for complex conjugation on $\bfV_{g_i}$, we have that $d^+=2$ and $d^-=1$ and that $\bfA_i^{\pm}\simeq (K/\mathcal{O})^{d^{\pm}}$.

\par Let $\Q_n$ be the subfield of $\Q(\mu_{p^{n+1}})$ degree $p^n$ and set $\Q_{\op{cyc}}:=\bigcup_{n\geq 0} \Q_n$. Letting $\Gamma:=\op{Gal}(\Q_{\op{cyc}}/\Q)$, we fix an isomorphism $\op{Gal}(\Q_{\op{cyc}}/\Q)\xrightarrow{\sim} \Z_p$. The extension $\Q_{\op{cyc}}$ is the cyclotomic $\Z_p$-extension of $\Q$. The Iwasawa algebra $\Lambda$ is defined as the following inverse limit $\Lambda:=\varprojlim_n \Z_p[\op{Gal}(\Q_n/\Q)]$, and is isomorphic to the formal power series ring $\Z_p\llbracket T\rrbracket$. We fix
a finite set $S$ of primes $q$ including all those that divide $c_\psi M_1M_2p$, and we let $\Q_S$ denote the maximal algebraic extension of $\Q$ unramified outside the set of primes $S$ and the infinite places. Further, define $\bfA_{i,\psi_t}:=\bfA_i\otimes \psi_t$
for an even integer $t$. 
Then the $p$-primary Selmer group $\Sel(\bfA_{i,\psi_t}/\Qcyc)$ is defined as the kernel of the following restriction maps
\[
\lambda_i:H^1\left(\Q_{S}/\Qcyc, \bfA_{i,\psi_t}\right)\rightarrow \bigoplus_{q\in S}\cH_q(\bfA_{i,\psi_t}/\Qcyc).
\]
Here for each prime $q\neq p$, the local term is defined as follows
\[
\cH_q(\bfA_{i,\psi_t}/\Qcyc) = \bigoplus_{\eta|q} H^1( {\Q}_{\op{cyc},\eta}, \bfA_{i,\psi_t}),
\]
where $\Q_{\op{cyc}, \eta}$ is the union of all completions of number fields contained in $\Qcyc$ at the prime $\eta$. Note that since all primes are finitely decomposed in $\Qcyc$, the above direct sum is finite.
The definition at the prime $q=p$ is more subtle, set
\[
\cH_p(\bfA_{i,\psi_t}/\Qcyc) = H^1( \Q_{\op{cyc}, \eta_p}, \bfA_{i,\psi_t})/\mathcal{L}_{\eta_p}
\]
with 
\[
\mathcal{L}_{\eta_p} = \ker\left( H^1( \Q_{\op{cyc}, \eta_p}, \bfA_{i,\psi_t}) \rightarrow H^1( I_{\eta_p}, \bfA_{i,\psi_t}^{-})\right).
\]
Here $\eta_p$ is the unique prime of $\Q_{\op{cyc}}$ above $p$, and $I_{\eta_p}$ denotes the inertia group at $\eta_p$. 
Then we have the following conjecture of Coates-Schmidt and Greenberg.
\begin{Conjecture}[\cite{CS}, \cite{Gre89}] Assume that $\psi_t$ is even. 
Then $\Sel(\bfA_{i,\psi_t}/\Q_{\op{cyc}})$ is a cotorsion $\Lambda$-module.
\end{Conjecture}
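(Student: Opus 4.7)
The plan is to split into the CM and non-CM cases, since the symmetric square of a CM form is reducible and the two situations call for quite different inputs. In broad strokes, in each case one wants to invoke some form of the Iwasawa main conjecture (or its ``Euler system'' half) to produce an upper bound on the dual Selmer group in terms of a nonzero characteristic element, which then forces cotorsionness. A key preliminary step in both cases is to verify that the appropriate $p$-adic $L$-function (or characteristic power series) is nonzero; in the automorphic case this follows from the interpolation formula \eqref{interpolation} once one exhibits at least one critical twist where the complex $L$-value does not vanish, and standard non-vanishing results for $L(\op{Sym}^2 g \otimes \chi, n)$ with $\chi$ varying through cyclotomic characters supply this.

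In the non-CM case I would follow the Loeffler--Zerbes strategy \cite{lz16}. Concretely, I would use the Beilinson--Flach Euler system attached to a pair of eigenforms, specialized to the diagonal, to produce cohomology classes in $H^1(\Q_S/\Q_{\op{cyc}}, \bfT_{g_i}^\ast(1)\otimes\psi_t^{-1})$. An explicit reciprocity law identifies the image of the bottom class in $H^1(\Q_{\op{cyc},\eta_p}, \cdot)$ with the $p$-adic $L$-function $\mathscr{L}^{\op{an}}(r_{g_i}\otimes\psi_t)$ up to an explicit factor. One then feeds the Euler system into a Kolyvagin/Rubin-style argument at the level of the cyclotomic tower to bound the characteristic ideal of the Pontryagin dual of $\op{Sel}(\bfA_{i,\psi_t}/\Q_{\op{cyc}})$ by (a multiple of) $\mathscr{L}^{\op{an}}$. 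Nonvanishing of $\mathscr{L}^{\op{an}}$ then immediately gives cotorsionness. The standing hypotheses that $\bar\rho$ is absolutely irreducible and $\p$-distinguished, and that $(\psi\epsilon_i)^2\not\equiv 1\pmod{\pp}$, are precisely what is needed to make the Euler system argument produce a non-trivial bound and to rule out the exceptional zero situations.

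In the CM case, suppose $g_i$ has CM by an imaginary quadratic field $K$ with associated Hecke character $\varphi$. Then $\rho_{g_i}\cong \op{Ind}_K^\Q \varphi$, and a direct computation decomposes
\begin{equation*}
r_{g_i}\otimes\psi_t \;\cong\; \op{Ind}_K^\Q(\varphi^2 \psi_t|_{G_K}) \;\oplus\; (\chi_K\otimes \psi_t),
\end{equation*}
where $\chi_K$ is the quadratic character cutting out $K$. For the induced summand, I would invoke Rubin's proof of the two-variable main conjecture for Hecke characters of imaginary quadratic fields \cite{rub91} to conclude cotorsionness of the corresponding Selmer group over $K_{\op{cyc}}$, then descend to $\Q_{\op{cyc}}$ via Shapiro's lemma. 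For the Dirichlet character summand, the Mazur--Wiles theorem \cite{mw86} gives the cotorsion statement directly. The assumption $(\psi\epsilon_i)^2\not\equiv 1\pmod{\pp}$ is exactly what avoids the dihedral/trivial-eigenspace cases where the characteristic ideal could be forced to vanish. Patching the two summands and checking that the local conditions defining Greenberg's Selmer group are compatible with the direct sum decomposition completes the CM case.

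The main obstacle I anticipate is in the non-CM case at weight $k>2$: although Loeffler--Zerbes handle a large range of situations, extending the argument to ordinary semistable reduction and to all critical twists requires knowing that the Beilinson--Flach classes are nontrivial and that the explicit reciprocity law is available in full generality. Equivalently, one needs the nonvanishing of $\mathscr{L}^{\op{an}}(r_{g_i}\otimes\psi_t)$ as an element of $\Lambda\otimes\Q$, which in turn reduces to a non-vanishing statement for central values of automorphic $L$-functions on $\op{GL}_3$ along a cyclotomic family. The purely algebraic input (Poitou--Tate, local duality, and the structure theory of $\Lambda$-modules à la \cite{Gre89}) is robust and presents no real difficulty; the genuine obstruction is thus analytic/automorphic, and it is precisely to avoid this issue that the present paper takes cotorsionness as a hypothesis rather than attempting to prove it.
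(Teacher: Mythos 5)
The statement you are trying to prove is stated in the paper as a \emph{Conjecture} (attributed to Coates--Schmidt and Greenberg), and the paper never proves it: cotorsionness is taken as a running hypothesis throughout (see the assumptions in Proposition \ref{algebraic-invariants-intro} and Theorem \ref{intro-thm}), with the remark that it is known in the non-CM case by Loeffler--Zerbes under some hypotheses, and in the CM case ``whenever it is expected to be true'' via Mazur--Wiles and Rubin. So there is no proof in the paper to compare against, and your proposal should be judged as an attempt to establish an open statement. As such it has a genuine gap, which your own last paragraph essentially concedes: in the non-CM case the Beilinson--Flach/Euler-system argument of \cite{lz16} is only available under additional hypotheses (big-image conditions, restrictions on the reduction type and on the twist) that are not part of the hypotheses of the Conjecture, and the nonvanishing input you invoke --- either nontriviality of the Beilinson--Flach classes in the required generality or a nonvanishing theorem for non-central critical values of $L(\op{Sym}^2 g\otimes\chi,n)$ along the cyclotomic family --- is not a ``standard'' result one can simply cite. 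Deferring the crucial step to results that cover only part of the stated generality is exactly why the paper lists this as a conjecture and assumes it.

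Two smaller points. First, your CM decomposition is slightly off: for $\rho_{g_i}\cong\op{Ind}_K^\Q\varphi$ one has $\op{Sym}^2\rho_{g_i}\cong\op{Ind}_K^\Q(\varphi^2)\oplus(\varphi\circ\op{Ver})$ with $\varphi\circ\op{Ver}=\chi_K\det\rho_{g_i}$, so after twisting the one-dimensional summand is $\chi_K\,\epsilon_i\,\chi_{\op{cyc}}^{k-1}\psi_t$, not $\chi_K\psi_t$; this matters because which abelian main conjecture (and which branch of the Kubota--Leopoldt/Katz theory) you need depends on that character, and the ``problematic'' dihedral cases excluded by $(\psi\epsilon_i)^2\not\equiv 1\pmod{\p}$ show up precisely here. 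Second, the compatibility of Greenberg's ordinary local condition at $p$ with the direct-sum decomposition is asserted but not checked; since the ordinary filtration on $\op{Sym}^2 T_{g_i}$ must be matched summand by summand with the natural filtrations on the induced piece and the character piece, this verification is part of the work, not a formality.
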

This conjecture has been settled by Loeffler and Zerbes in many cases (cf. \cite{lz16}). 

\begin{Lemma} Let $W=\bfA_{i,\psi_t}[\p]$ and let $W^* = \op{Hom}(W, \mu_{p^{\infty}})$ denote the Tate dual. 
Then we have $H^0(\Q, W)=H^0(\Q, W^*)=0$.
\end{Lemma}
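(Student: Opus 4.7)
The plan is to reduce both vanishings to showing that certain one-dimensional characters do not embed in $\op{ad}^0(\bar\rho_{g_i})$. Since $p > 2$, we have the decomposition $\bar\rho_{g_i}\otimes\bar\rho_{g_i} \cong \op{Sym}^2(\bar\rho_{g_i}) \oplus \det(\bar\rho_{g_i})$, yielding the isomorphism $\op{Sym}^2(\bar\rho_{g_i}) \cong \op{ad}^0(\bar\rho_{g_i}) \otimes \det(\bar\rho_{g_i})$. Using $\bar\rho_{g_i}^{\vee} \cong \bar\rho_{g_i} \otimes \det(\bar\rho_{g_i})^{-1}$ and $W^* = W^\vee \otimes \bar\chi_{\op{cyc}}$, a direct calculation gives
\[
W \cong \op{ad}^0(\bar\rho_{g_i}) \otimes \det(\bar\rho_{g_i})\,\psi_t, \qquad W^* \cong \op{ad}^0(\bar\rho_{g_i}) \otimes \det(\bar\rho_{g_i})^{-1}\psi_t^{-1}\bar\chi_{\op{cyc}}.
\]
Therefore the vanishings $H^0(\Q, W)=0$ and $H^0(\Q, W^*)=0$ amount to showing that the characters $\chi_1 := \det(\bar\rho_{g_i})^{-1}\psi_t^{-1}$ and $\chi_2 := \det(\bar\rho_{g_i})\,\psi_t\,\bar\chi_{\op{cyc}}^{-1}$ do not embed in $\op{ad}^0(\bar\rho_{g_i})$.

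The second step is a Schur-type argument. I claim that a character $\chi$ of $G_\Q$ embeds in $\op{ad}^0(\bar\rho_{g_i})$ only if $\chi \neq 1$ and $\chi^2 = 1$. Indeed, such an embedding produces a nonzero element $f \in \op{End}(\bar\rho_{g_i})$ with $\bar\rho_{g_i}(g) f = \chi(g)\, f\,\bar\rho_{g_i}(g)$ for all $g$; absolute irreducibility forces $f$ to be invertible, and taking determinants on both sides gives $\chi(g)^2 = 1$. The case $\chi = 1$ is excluded because Schur's lemma gives $\op{End}_{G_\Q}(\bar\rho_{g_i}) = \F$, and scalars have nonzero trace when $p > 2$, so they do not lie in $\op{ad}^0(\bar\rho_{g_i})$.

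The final step is to rule out $\chi_j^2 = 1$ for $j \in \{1,2\}$ using the running hypothesis. Substituting $\det(\bar\rho_{g_i}) = \bar\epsilon_i\,\bar\chi_{\op{cyc}}^{k-1}$, $\psi_t = \psi\,\bar\chi_{\op{cyc}}^t$ (using $\eta_1 \equiv \bar\chi_{\op{cyc}} \pmod{\p}$ on $G_\Q$), the equation $\chi_j^2 = 1$ becomes an identity of Galois characters of the form
\[
(\psi\epsilon_i)^2 = \bar\chi_{\op{cyc}}^{N_j}
\]
for an explicit integer $N_j$ depending only on $k$ and $t$ (namely $N_1 = -2(k-1+t)$ and $N_2 = -2(k+t-2)$). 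The hypothesis that $\epsilon_i$ is trivial on $(\Z/p\Z)^\times$ together with $(c_\psi, p) = 1$ guarantees that the left-hand side is unramified at $p$. On the other hand, $\bar\chi_{\op{cyc}}^{N_j}$ is ramified at $p$ unless $(p-1) \mid N_j$. Comparing ramification at $p$ forces $\bar\chi_{\op{cyc}}^{N_j} = 1$, whence $(\psi\epsilon_i)^2 \equiv 1 \pmod{\p}$, contradicting the running hypothesis.

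The hard part, such as it is, lies in the final ramification bookkeeping: the bare condition $(\psi\epsilon_i)^2 \not\equiv 1 \pmod{\p}$ is not, prima facie, the same as $\chi_j^2 \neq 1$, and one must carefully separate the prime-to-$p$ part of the characters from the powers of the mod-$p$ cyclotomic character. Once that separation is made, both vanishings follow simultaneously from a single computation.
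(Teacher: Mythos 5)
Your proposal is correct and follows essentially the same route as the paper: both identify $W$ (and $W^*$) with a twist of $\op{ad}^0(\bar\rho_{g_i})$, use absolute irreducibility plus a determinant computation to force the relevant character to be quadratic, and then compare ramification at $p$ (the prime-to-$p$ characters $\psi,\epsilon_i$ versus the power of the mod-$p$ cyclotomic/Teichm\"uller character) to contradict $(\psi\epsilon_i)^2\not\equiv 1\pmod{\p}$. The only cosmetic difference is that you handle $W^*$ by direct computation with $W^\vee\otimes\bar\chi_{\op{cyc}}$ and self-duality of $\op{ad}^0$, where the paper invokes $(\op{Ad}^0)^*\cong\op{Ad}^0(1)$ — the same fact in different clothing.
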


\begin{proof} This follows from our assumption that $(\psi\epsilon_i)^2\not\equiv 1\pmod{\pp}$. Indeed, since $V = \overline\rho_{g_i}$ is irreducible, we have $V\otimes \det^{-1}\cong \hom(V, \Q_p/\Z_p)$ and
$\op{Ad}(V)=\hom(V, V)\cong V\otimes \hom(V, \Q_p/\Z_p)\cong V\otimes V\otimes \det^{-1}$, so that 
  $\op{Ad}^0(V)\cong\sym(V)\otimes\det^{-1}
\cong W\otimes\psi_t^{-1}\epsilon_i^{-1}\eta_{1-k} $. So 
we find that  if $H^0(\Q,W)\neq 0$, then $\op{Ad}^0(V)$ contains a line on which $\gal(\overline{\Q}/\Q)$ acts
via the character $\mu=(\psi\epsilon_i\eta_{k-1+t})^{-1}$. Since $V$ is irreducible, it is easy to see that that $V\otimes\mu
\cong V$. By taking determinants of both sides,  we conclude that
 $\mu=\psi\epsilon_i\eta_{k-1+t}$ must be  quadratic. But each of $\psi$ and $\epsilon$ is unramified at $p$,
whilst the Teichm\"uller character is unramified outside $p$, so we find that $(\psi\epsilon)^2\equiv 1\pmod{\p}$,
 which is a contradiction.  The proof for $W^*$ is analogous, using the fact that $(\ad^0)^*\cong \ad^0(1)$. 
 \end{proof}

\begin{Proposition}Let $\bfA_{i,\psi_t}$ be as above. Assume 
that $\psi_t$ is even. 
Then the localization map $\lambda_i$ is surjective.
\end{Proposition}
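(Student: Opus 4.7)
The plan is to use Poitou-Tate global duality to identify $\op{coker}(\lambda_i)^\vee$ with the Pontryagin dual of a Greenberg-type dual Selmer group for the Tate dual module $\bfT^{*} := \op{Hom}(\bfA_{i,\psi_t}, \mu_{p^\infty})$ over $\Qcyc$, and then to prove that this dual Selmer group vanishes using the $H^0$-vanishing from the previous lemma.

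First I would propagate the hypothesis $H^0(\Q, W^*) = 0$ to $H^0(\Qcyc, W^*) = 0$. If $(W^*)^{G_\Qcyc}$ were nonzero, then being a nonzero finite $p$-group acted on by the pro-$p$ group $\Gamma = \op{Gal}(\Qcyc/\Q)$, it would have nonzero $\Gamma$-fixed subspace $(W^*)^{G_\Q} = H^0(\Q, W^*)$, contradicting the previous lemma. Nakayama's lemma then lifts this to $(\bfT^*)^{G_\Qcyc} = 0$, since the $\cO$-finite module $(\bfT^*)^{G_\Qcyc}$ has mod-$\pp$ reduction contained in $(W^*)^{G_\Qcyc} = 0$.

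By Poitou-Tate global duality in the form used by Greenberg (cf.\ \cite[\S 3]{GV00} and \cite{Gre89}), $\op{coker}(\lambda_i)^\vee$ embeds into a dual Selmer group $\mathcal{X}(\bfT^*/\Qcyc) \subset H^1(\Q_S/\Qcyc, \bfT^*)$ whose local conditions are the annihilators under local Tate duality of the conditions defining $\Sel(\bfA_{i,\psi_t}/\Qcyc)$: triviality at primes $q \neq p$, and the unramified-like subgroup cut out by the ordinary filtration at $p$. It therefore suffices to prove $\mathcal{X}(\bfT^*/\Qcyc) = 0$. Since $\mathcal{X}(\bfT^*/\Qcyc)$ is a finitely generated $\Lambda$-module, Nakayama reduces to the vanishing of its mod-$\pp$ analogue $\mathcal{X}(W^*/\Qcyc)$, and this follows from $(W^*)^{G_\Qcyc} = 0$ by the standard Chebotarev-type argument of \cite[\S 3]{GV00}: a nonzero class in the residual dual Selmer group, being locally split everywhere, would force the existence of a nonzero $G_\Qcyc$-fixed vector of $W^*$.

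The main obstacle is the last step, namely importing the Chebotarev argument from \cite{GV00} (where the Galois module is two-dimensional adjoint) to the present three-dimensional symmetric square setting with the specified ordinary Greenberg local condition at $p$. This is a routine adaptation once the standard compatibility between the ordinary filtration on $\bfT^*$ and local Tate duality is in hand; the hypothesis $(\psi\epsilon_i)^2 \not\equiv 1 \pmod{\pp}$ is precisely what supplies $H^0(\Q, W^*) = 0$ and hence drives the entire argument.
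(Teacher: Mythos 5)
Your opening move—propagating $H^0(\Q,W^*)=0$ to $\Qcyc$ by the pro-$p$ argument and then to the compact dual module—is exactly how the paper begins; the paper then simply quotes \cite[Proposition 2.1]{GV00}, which packages the Poitou--Tate/$\Lambda$-module argument that you attempt to reconstruct. The reconstruction, however, breaks down at the decisive step. First, the orthogonal local conditions are misidentified: since $\Sel(\bfA_{i,\psi_t}/\Qcyc)$ imposes the \emph{trivial} subgroup at every $q\in S$, $q\neq p$ (the local term there is all of $\bigoplus_{\eta\mid q}H^1(\Q_{\op{cyc},\eta},\bfA_{i,\psi_t})$), the dual Selmer group for $\bfT^*$ that controls $\op{coker}(\lambda_i)$ carries \emph{no} condition at $q\neq p$ and only the annihilator of the ordinary condition at $p$; its classes are not ``locally split everywhere.'' If instead you insist on the everywhere-locally-trivial group you describe, it is strictly smaller than the group dual to the cokernel, and its vanishing would not give surjectivity.

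More seriously, even for the correct dual group (or for the smaller one), vanishing does not follow from $(W^*)^{G_{\Qcyc}}=0$, and the ``Chebotarev-type argument'' you attribute to \cite{GV00} does not exist in that form: an $H^1$-class that is trivial at every place does not produce a nonzero $G_{\Qcyc}$-fixed vector of $W^*$. Everywhere-unramified, everywhere-locally-split classes over $\Qcyc$ are governed by Iwasawa modules of class groups of the fields cut out by $W^*$ and are in general nonzero even when all Galois invariants vanish. What actually kills the compact dual Selmer group in the Greenberg/\cite{GV00} machinery is a $\Lambda$-module argument: one shows it is $\Lambda$-torsion-free (this is where the finiteness of $H^0(\Qcyc,\bfT^*)$ established in the first step enters) and that its $\Lambda$-rank is zero (a corank computation at $p$, resting on the cotorsion/weak Leopoldt-type input that is a running hypothesis of the paper). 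Your proposal invokes neither ingredient, so the key vanishing claim is unsupported; in addition, the comparison of the mod-$\p$ reduction of the dual Selmer group with the residual dual Selmer group of $W^*$ (needed for your Nakayama reduction) is asserted without the control argument it requires, though that is a secondary issue.
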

\begin{proof}
We let $\bfT_{i,\psi_t}^*:=\op{Hom}(\mathbf{T}_{g_i}\otimes \psi_t, \mu_{p^{\infty}})$. Then we have that $H^0(\Q, \bfT_{i,\psi_t}^*)=0$. Since $\op{Gal}(\Q_{\op{cyc}}/\Q)$ is pro-$p$, it follows that $H^0(\Q_{\op{cyc}}, \bfT_{i,\psi_t}^*)=0$ as well and thus in particular, is finite. The result follows from \cite[Proposition 2.1]{GV00}.
\end{proof}

Denote by $S_0:=S\backslash \{p\}$ and introduce the $S_0$-imprimitive Selmer group to be the Selmer group obtained by imposing conditions only at $p$.

\begin{Definition}
The \emph{imprimitive Selmer group} is defined by:
\[
\Sel^{S_0}(\bfA_{i,\psi_t}/\Qcyc) = \ker\left( H^1\left( \Q_{S}/\Qcyc, \bfA_{i,\psi_t}\right)\xrightarrow{\theta_0} \mathcal{H}_p(\bfA_{i,\psi_t}/\Qcyc)\right).
\]
\end{Definition}
Since the map defining the Selmer group is surjective, it follows that
\begin{equation}
\label{quotient}
\Sel^{S_0}(\bfA_{i,\psi_t}/\Qcyc)/\Sel(\bfA_{i,\psi_t}/\Qcyc)\simeq \bigoplus_{q\in S_0}\cH_q(\bfA_{i,\psi_t}/\Qcyc).
\end{equation}
\begin{Lemma}\label{local mu is 0}
If $q\neq p$, then $\cH_q(\bfA_{i,\psi_t}/\Qcyc)$ is a cofinitely generated and cotorsion $\Lambda$-module with $\mu$-invariant equal to $0$. 
\end{Lemma}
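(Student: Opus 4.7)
The plan is to reduce to the stronger assertion that $\cH_q(\bfA_{i,\psi_t}/\Qcyc)$ is cofinitely generated as a $\Z_p$-module. By the structure theorem for finitely generated $\Lambda$-modules, any $\Lambda$-module whose Pontryagin dual is finitely generated over $\Z_p$ has no $\Lambda$-free part and no subquotients of the form $\Lambda/(p^a)$; hence it is automatically $\Lambda$-cotorsion with $\mu$-invariant equal to $0$. Thus the three conclusions of the lemma will follow simultaneously once this cofinite generation over $\Z_p$ is established.

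First I would use that $q \neq p$ is unramified and finitely decomposed in $\Qcyc/\Q$: the decomposition subgroup of $q$ inside $\op{Gal}(\Qcyc/\Q) \cong \Z_p$ is the closure of $\langle \frob(q)\rangle$, which is an open subgroup of $\Z_p$, hence of finite index. Consequently the direct sum $\cH_q(\bfA_{i,\psi_t}/\Qcyc) = \bigoplus_{\eta\mid q} H^1(\Qcyc_{,\eta}, \bfA_{i,\psi_t})$ involves only finitely many primes, and it is enough to show each summand is cofinitely generated over $\Z_p$.

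Next, fix a prime $\eta\mid q$ and set $L := \Qcyc_{,\eta}$. Since $\Qcyc/\Q$ is unramified outside $p$, the extension $L/\Q_q$ is the unique unramified $\Z_p$-extension of $\Q_q$. I would apply inflation--restriction with respect to the maximal unramified extension $L^{\op{nr}}$ of $L$. Because $L$ already absorbs the pro-$p$ unramified part of $\Q_q^{\op{nr}}$, the quotient $\op{Gal}(L^{\op{nr}}/L) \cong \prod_{\ell\neq p}\Z_\ell$ has no pro-$p$ part and so its higher continuous cohomology on $p$-primary discrete modules vanishes. This identifies $H^1(L, \bfA_{i,\psi_t})$ with the $\op{Gal}(L^{\op{nr}}/L)$-invariants of $H^1(I_L, \bfA_{i,\psi_t})$, where $I_L \cong I_{\Q_q}$. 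Wild inertia is pro-$q$ and acts trivially on a $p$-primary discrete module, so one is reduced to computing $H^1$ of the $p$-part of tame inertia, a copy of $\Z_p(1)$, with coefficients in $\bfA_{i,\psi_t}^{J}$ for $J$ the complementary prime-to-$p$ part. Since $\bfA_{i,\psi_t}$ is cofinitely generated over $\cO$, so is $\bfA_{i,\psi_t}^{J}$, and the coinvariants of a $\Z_p(1)$-action on such a module are cofinitely generated over $\Z_p$.

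The main obstacle is the bookkeeping in this Hochschild--Serre reduction, namely verifying that a topological generator of $\Z_p(1)$ acts with finite cokernel on $\bfA_{i,\psi_t}^{J}$; this amounts to the familiar "local Euler factor at $q$" computation that will reappear when defining the integers $\sigma_i^{(q)}(\psi_t)$. The argument is standard, appearing essentially verbatim in \cite{GV00} (compare the discussion leading to Propositions 2.2 and 2.4 there) and in \cite{Gre89}; indeed, once the hypotheses have been translated, one can simply invoke \cite[Proposition 2.4]{GV00} to conclude.
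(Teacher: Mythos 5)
Your argument is correct, but it takes a genuinely different route from the paper's own proof. The paper argues more briefly: after the same reduction to cofinite generation over $\Z_p$, it observes that the multiplication-by-$\varpi$ sequence exhibits $\cH_q(\bfA_{i,\psi_t}/\Qcyc)[\p]$ as a quotient of $\bigoplus_{\eta\mid q}H^1(\Q_{\op{cyc},\eta},\bfA_{i,\psi_t}[\p])$, which is finite since there are only finitely many $\eta\mid q$ and the coefficient module is finite; cotorsionness and $\mu=0$ then follow immediately. You instead compute $H^1(\Q_{\op{cyc},\eta},\bfA_{i,\psi_t})$ directly by inflation--restriction through the unramified tower and the inertia filtration; this is essentially the computation behind \cite[Proposition 2.4]{GV00} (which you may indeed simply cite), and it buys more than the lemma asks for, namely an identification of the $\Z_p$-corank with the Euler-factor quantity $\sigma_i^{(q)}(\psi_t)$, at the cost of extra bookkeeping. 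Two small corrections to your write-up, neither of which breaks the argument: wild inertia need not act trivially on $\bfA_{i,\psi_t}$ (the representation may be wildly ramified at $q$); what you actually use is that a pro-$q$ group has vanishing higher cohomology with $p$-primary coefficients, so that $H^1(I_q,\bfA_{i,\psi_t})\cong H^1\bigl(I_q/P_q,\bfA_{i,\psi_t}^{P_q}\bigr)$ for $P_q$ the wild inertia. Also, your stated ``main obstacle'' is not needed: you do not have to show that a topological generator of the $p$-part of tame inertia acts with finite cokernel on $\bfA_{i,\psi_t}^{J}$ (that would force $\sigma_i^{(q)}(\psi_t)=0$, which is false in general); cofinite generation of the coinvariants over $\cO$, which you already noted follows from their being a quotient of a cofinitely generated module, is all the lemma requires.
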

\begin{proof}
It suffices to show that $\cH_q(\bfA_{i,\psi_t}/\Qcyc)$ is a cofinitely generated as a $\Z_p$-module, or equivalently, the $p$-torsion subgroup $\cH_q(\bfA_{i,\psi_t}/\Qcyc)[\p]$ is finite. Consider the short exact sequence
\[0\rightarrow \bigoplus_{\eta|q} \frac{H^0(\Q_{\op{cyc}, \eta}, \bfA_{i,\psi_t})}{\p\left( H^0(\Q_{\op{cyc}, \eta}, \bfA_{i,\psi_t})\right)}\rightarrow \bigoplus_{\eta|q} H^1( \Q_{\op{cyc}, \eta}, \bfA_{i,\psi_t}[\p]) \rightarrow \cH_q(\bfA_{i,\psi_t}/\Qcyc)[\p]\rightarrow 0.\]
The set of primes $\eta|q$ of $\Qcyc$ is finite and so is $ H^1( \Q_{\op{cyc}, \eta}, \bfA_{i,\psi_t}[\p])$. The result follows.
\end{proof}
Let $\sigma_i^{(q)}$ denote the $\Z_p$-corank of $\cH_q(\bfA_{i,\psi_t}/\Qcyc)$ for $q\in S_0$.
Set $\lambda^{S_0}(\bfA_{i,\psi_t}/\Qcyc)$ to be the $\lambda$-invariant of $\Sel^{S_0}(\bfA_{i,\psi_t}/\Qcyc)$. It follows from the structure theory of $\Lambda$-modules that 
\[\lambda^{S_0}(\bfA_{i,\psi_t}/\Qcyc)=\op{corank}_{\Z_p} \left(\Sel^{S_0}(\bfA_{i,\psi_t}/\Qcyc)\right).\]
It follows from $\eqref{quotient}$ that the following relation is satisfied:
\begin{equation}
\label{relating im primitive and classical lambda invariant}
\lambda^{S_0}(\bfA_{i,\psi_t}/\Qcyc)=
\lambda(\bfA_{i,\psi_t}/\Qcyc) + \sum_{q\in S_0} \sigma_i^{(q)}.
\end{equation}
Analogous to the classical and imprimitive Selmer group, we also define the \emph{reduced} classical and imprimitive Selmer groups which we denote by $\Sel(\bfA_{i,\psi_t}[\p]/\Qcyc)$ and $\Sel^{S_0}(\bfA_{i,\psi_t}[\p]/\Qcyc)$, respectively. 
\par For $q\in S_0$ set
\[
\cH_q(\bfA_{i,\psi_t}[\p]/\Qcyc) := \prod_{q^\prime|q} H^1\left( {\Qcyc}_{,q^\prime}, \bfA_{i,\psi_t}[\p]\right),
\]
and for $q=p$, set
\[\cH_q(\bfA_{i,\psi_t}[\p]/\Qcyc):= H^1(\Q_{\op{cyc}, \eta_p}, \bfA_{i,\psi_t}[\p])/\overline{\mathcal{L}}_{\eta_p},\] where 
\[\overline{\mathcal{L}}_{\eta_p}:=\ker\left( H^1\left( {\Q}_{\op{cyc},\eta_p}, \bfA_{i,\psi_t}[\p]\right) \rightarrow H^1\left( I_{\eta_p}, \bfA_{i,\psi_t}^-[\p]\right)\right).\]
\begin{Definition}
The reduced imprimitive Selmer group is defined as follows
\[\Sel^{S_0}(\bfA_{i,\psi_t}[\p]/\Qcyc):=\op{ker} \left(H^1\left( \Q_{S}/\Qcyc, \bfA_{i,\psi_t}[\p]\right)\xrightarrow{\overline{\theta}_0}  \cH_p(\bfA_{i,\psi_t}[\p]/\Qcyc)\right)\]
\end{Definition}

\begin{Proposition}
\label{Kim 2.10}
For $i=1,2$, we have a natural isomorphism
\[
\Sel^{S_0}(\bfA_{i,\psi_t}[\p]/\Qcyc) \simeq \Sel^{S_0}(\bfA_{i,\psi_t}/\Qcyc)[\p].
\]
\end{Proposition}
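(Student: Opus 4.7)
The plan is to derive this isomorphism from the Kummer-style short exact sequence
$$0 \to \bfA_{i,\psi_t}[\p] \to \bfA_{i,\psi_t} \xrightarrow{\varpi} \bfA_{i,\psi_t} \to 0,$$
where $\varpi$ is a uniformizer of $\cO$, by comparing the resulting long exact sequences in Galois cohomology globally and at the unique prime $\eta_p$ of $\Qcyc$ above $p$. The same type of argument appears in \cite{GV00}; I will essentially translate it to the three-dimensional setting.

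First I would establish the global identification. By the previous lemma $H^0(\Q, \bfA_{i,\psi_t}[\p])=0$, and since $\Gal(\Qcyc/\Q)\cong\Z_p$ is pro-$p$ acting on the finite $\F$-vector space $\bfA_{i,\psi_t}[\p]$, a standard inflation-restriction argument gives $H^0(\Qcyc, \bfA_{i,\psi_t}[\p])=0$. Divisibility of $\bfA_{i,\psi_t}$ then forces $H^0(\Qcyc, \bfA_{i,\psi_t})=0$, so the long exact sequence in $H^i(\Q_S/\Qcyc,-)$ attached to the sequence above collapses to an isomorphism
$$H^1(\Q_S/\Qcyc, \bfA_{i,\psi_t}[\p]) \xrightarrow{\ \sim\ } H^1(\Q_S/\Qcyc, \bfA_{i,\psi_t})[\p].$$

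Second, I would carry out the local comparison at $\eta_p$. The short exact sequence restricts to an analogous sequence on the unramified quotient $\bfA_{i,\psi_t}^-$, giving a commutative diagram whose top row is the long exact sequence of $H^i(\Q_{\op{cyc},\eta_p},-)$ and whose bottom row is that of $H^i(I_{\eta_p},(-)^-)$, linked by restriction-to-inertia. A snake lemma chase then produces a natural map $\mathcal{H}_p(\bfA_{i,\psi_t}[\p]/\Qcyc)\to \mathcal{H}_p(\bfA_{i,\psi_t}/\Qcyc)[\p]$ with $\overline{\mathcal L}_{\eta_p}$ mapping into $\mathcal L_{\eta_p}$, and the task is to check that this induced map on quotients is injective. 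Assembling the global isomorphism with this local map into a single commutative square, the proposition follows by a diagram chase: an element of $H^1(\Q_S/\Qcyc, \bfA_{i,\psi_t}[\p])$ lies in $\Sel^{S_0}(\bfA_{i,\psi_t}[\p]/\Qcyc)$ if and only if its image in $H^1(\Q_S/\Qcyc, \bfA_{i,\psi_t})[\p]$ lies in $\Sel^{S_0}(\bfA_{i,\psi_t}/\Qcyc)[\p]$.

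The main obstacle is precisely this local injectivity at $p$. Concretely, one must show that the kernel of $H^1(\Q_{\op{cyc},\eta_p},\bfA_{i,\psi_t}[\p])\to H^1(\Q_{\op{cyc},\eta_p},\bfA_{i,\psi_t})[\p]$, which is a quotient of $H^0(\Q_{\op{cyc},\eta_p},\bfA_{i,\psi_t})$, is contained in $\overline{\mathcal L}_{\eta_p}$, and that no new elements of the cokernel appear in $\mathcal H_p(\bfA_{i,\psi_t}/\Qcyc)[\p]$. Both reduce to showing that the relevant local $H^0$-terms on $\bfA_{i,\psi_t}^\pm$ are under control, and here the $p$-ordinary, $p$-distinguished hypothesis on $\ol\rho$ together with the running assumption $(\psi\epsilon_i)^2\not\equiv 1\pmod{\p}$ is exactly what is needed to rule out the potential obstructions, in the same manner as the analysis of the local terms in \cite{GV00}.
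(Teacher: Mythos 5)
Your overall strategy is the same as the paper's (the Kummer sequence $0\to\bfA_{i,\psi_t}[\p]\to\bfA_{i,\psi_t}\to\bfA_{i,\psi_t}\to 0$, the global isomorphism on $H^1(\Q_S/\Qcyc,-)$ forced by $H^0(\Qcyc,\bfA_{i,\psi_t})=0$, and a diagram chase against the local term at $\eta_p$), and the global half of your argument is complete and correct. The problem is that the step you yourself single out as ``the main obstacle'' --- the injectivity of the induced map $\cH_p(\bfA_{i,\psi_t}[\p]/\Qcyc)\to\cH_p(\bfA_{i,\psi_t}/\Qcyc)[\p]$ --- is never actually proved. You assert that it ``reduces to showing that the relevant local $H^0$-terms on $\bfA_{i,\psi_t}^\pm$ are under control'' and that the $p$-distinguished hypothesis together with $(\psi\epsilon_i)^2\not\equiv 1\pmod{\p}$ is ``exactly what is needed,'' but neither of those hypotheses is what makes this step work, and no mechanism is given. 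The character condition is used for the \emph{global} vanishing of $H^0$ (which you already invoked), and $p$-distinguishedness plays no role here at all; citing them in place of an argument leaves the local injectivity unestablished.

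The actual mechanism, which is the content of the paper's proof, is that the local condition at $p$ is defined through restriction to inertia of the \emph{unramified-up-to-tame-twist} quotient: by construction $\cH_p(\bfA_{i,\psi_t}[\p]/\Qcyc)$ injects into $H^1(I_{\eta_p},\bfA_{i,\psi_t}^-[\p])$ and $\cH_p(\bfA_{i,\psi_t}/\Qcyc)$ injects into $H^1(I_{\eta_p},\bfA_{i,\psi_t}^-)$, so injectivity of the comparison map on local conditions reduces, via the obvious commutative square, to injectivity of $H^1(I_{\eta_p},\bfA_{i,\psi_t}^-[\p])\to H^1(I_{\eta_p},\bfA_{i,\psi_t}^-)[\p]$. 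The kernel of the latter is $H^0(I_{\eta_p},\bfA_{i,\psi_t}^-)/\p$, and this vanishes for an elementary reason: when $t=0$ the module $\bfA_{i,\psi}^-$ is unramified at $p$, so $H^0(I_{\eta_p},\bfA_{i,\psi}^-)=\bfA_{i,\psi}^-$ is divisible and its reduction mod $\p$ is zero, while for $t\neq 0$ the twist by the nontrivial tame character $\eta_1^t$ forces $H^0(I_{\eta_p},\bfA_{i,\psi_t}^-)=0$ outright. Your proposal never reduces to inertia cohomology of the minus part and never performs this $H^0$ computation, so the decisive step is a genuine gap; also note that surjectivity of the local comparison (your worry about ``new elements of the cokernel'') is not needed, since the snake-lemma chase only requires the middle vertical map to be an isomorphism and the right-hand vertical map to be injective.
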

\begin{proof}
We consider the diagram relating the two Selmer groups
\[
\begin{tikzcd}[column sep = small, row sep = large]
0\arrow{r} & \Sel^{S_0}(\bfA_{i,\psi_t}[\p]/\Qcyc) \arrow{r}  \arrow{d}{f} & H^1(\Q_{S}/\Qcyc, \bfA_{i,\psi_t}[\p]) \arrow{r} \arrow{d}{g} & \op{im} \overline{\theta}_0 \arrow{r} \arrow{d}{h} & 0\\
0\arrow{r} & \Sel^{S_0}(\bfA_{i,\psi_t}/\Qcyc) [\p] \arrow{r} & H^1(\Q_S/\Qcyc, \bfA_{i,\psi_t})[\p]  \arrow{r}  &\left(\op{im} \theta_0\right)[\p]\arrow{r}  & 0,
\end{tikzcd}\]
where the vertical maps are induced by the Kummer sequence. Additionally, we have
 \[H^0(\Q,\bfA_{i,\psi_t}[\p])=H^0(\Qcyc,\bfA_{i,\psi_t}[\p])^{\Gamma}=0.\]
Since $\Qcyc/\Q$ is a pro-$p$ extension, we deduce that $H^0(\Qcyc, \bfA_{i,\psi_t})=0$ and therefore $g$ is injective.
On the other hand, it clear that $g$ is surjective.

It only remains to show that $h$ is injective.
For $q=p$, denote by $\iota$ the natural map
\[\iota: \cH_q(\bfA_{i,\psi_t}[\p]/\Qcyc)\rightarrow \cH_q(\bfA_{i, \psi_t}/\Qcyc)[\p].\]
Consider the commutative square with injective horizontal maps
\[
\begin{tikzcd}[column sep = small, row sep = large]
 & \cH_q(\bfA_{i, \psi_t}[\p]/\Qcyc) \arrow{r}  \arrow{d}{\iota} & H^1\left( I_{\eta_p}, \bfA_{i, \psi_t}^-[\p]\right) \arrow{d}{j} \\
 & \cH_q(\bfA_{i, \psi_t}/\Qcyc)[\p]\arrow{r} &  H^1\left( I_{\eta_p}, \bfA_{i, \psi_t}^-\right)[\p].
\end{tikzcd}\]
Since $\bfA_{i, \psi}^-$ is unramified at $p$, it follows that $H^0(I_{\eta_p}, \bfA_{i, \psi}^-)=\bfA_{i, \psi}^-$ is divisible. It is easy to see that if $t\neq 0$, then, $H^0(I_{\eta_p}, \bfA_{i, \psi_t}^-)=0$.
The kernel of the map 
\[\iota: H^1(I_{\eta_p}, \bfA_{i, \psi_t}^-[\p])\rightarrow H^1(I_{\eta_p}, \bfA_{i, \psi_t}^-)[\p]\] is $H^0(I_{\eta_p}, \bfA_{i, \psi_t}^-)/\p=0$.
\end{proof}

\begin{Lemma}
The isomorphism $ \bfA_{1,\psi_t}[\p]\simeq  \bfA_{2,\psi_t}[\p]$ of Galois modules induces an isomorphism of residual Selmer groups 
\[
\Sel^{S_0}(\bfA_{1,\psi_t}[\p]/\Qcyc)\simeq \Sel^{S_0}(\bfA_{2,\psi_t}[\p]/\Qcyc).
\]
\end{Lemma}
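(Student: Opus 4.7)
The plan is to exploit the canonicity of both the global cohomology and the local Greenberg condition at $p$. First, observe that the reduced imprimitive Selmer group $\Sel^{S_0}(\bfA_{i,\psi_t}[\p]/\Qcyc)$ is defined purely in terms of the $\gal(\Q_S/\Qcyc)$-module $\bfA_{i,\psi_t}[\p]$ and the local condition $\overline{\mathcal{L}}_{\eta_p}$ at the unique prime of $\Qcyc$ above $p$. The hypothesized Galois isomorphism $\iota\colon\bfA_{1,\psi_t}[\p]\xrightarrow{\sim}\bfA_{2,\psi_t}[\p]$ immediately induces an isomorphism on $H^1(\Q_S/\Qcyc, -)$, so the entire argument reduces to showing that $\iota$, restricted to a decomposition group at $p$, identifies the two local conditions.

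The second and only substantive step is therefore to verify that $\iota\vert_{G_p}$ carries $\bfA_{1,\psi_t}^+[\p]$ onto $\bfA_{2,\psi_t}^+[\p]$; once this is known, the induced map on the quotient pieces $\bfA_{i,\psi_t}^-[\p]$ (and hence on $H^1(I_{\eta_p},\bfA_{i,\psi_t}^-[\p])$) matches the $\overline{\mathcal{L}}_{\eta_p}$'s. I would prove this by showing that the three-step ordinary filtration on $\bfA_{i,\psi_t}[\p]$ as a $G_p$-module is characterized intrinsically, and so is preserved by any abstract $G_p$-module isomorphism. The simplest route is to use that the filtration on $\bfT_{g_i}$ comes by functoriality from the ordinary filtration on $\op{T}_{g_i}$ whose residual graded pieces $\bar\chi_{\op{cyc}}^{k-1}\bar\gamma_1$ and $\bar\gamma_2$ are distinct by the $\p$-distinguished hypothesis; symmetric-squaring and twisting by $\bar\psi_t$ yields graded pieces with residual characters $\bar\chi_{\op{cyc}}^{2k-2}\bar\gamma_1^2\bar\psi_t$, $\bar\chi_{\op{cyc}}^{k-1}\bar\gamma_1\bar\gamma_2\bar\psi_t$, $\bar\gamma_2^2\bar\psi_t$. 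A short case analysis, using $p$-distinguishedness to rule out coincidences of adjacent graded pieces and $(\psi\epsilon_i)^2\not\equiv 1\pmod\p$ (together with $\bar\gamma_1\bar\gamma_2=\bar\epsilon_i$) to rule out the coincidence of the top and bottom graded pieces, shows that the three characters are pairwise distinct. Consequently the filtration is the unique $G_p$-stable filtration of its type on $\bfA_{i,\psi_t}[\p]$, and is preserved by $\iota\vert_{G_p}$.

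With the filtration identified, $\iota$ induces a commutative diagram of restriction maps
\[
\begin{tikzcd}
H^1(\Q_{\op{cyc},\eta_p},\bfA_{1,\psi_t}[\p])\arrow{r}\arrow{d}{\iota_*}& H^1(I_{\eta_p},\bfA_{1,\psi_t}^-[\p])\arrow{d}{\iota_*}\\
H^1(\Q_{\op{cyc},\eta_p},\bfA_{2,\psi_t}[\p])\arrow{r}& H^1(I_{\eta_p},\bfA_{2,\psi_t}^-[\p]),
\end{tikzcd}
\]
whence $\iota_*(\overline{\mathcal{L}}_{\eta_p}^{(1)})=\overline{\mathcal{L}}_{\eta_p}^{(2)}$. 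Combined with the global identification on $H^1(\Q_S/\Qcyc, -)$, this yields the claimed isomorphism of reduced imprimitive Selmer groups by passing to kernels.

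The main obstacle is precisely the second step: proving pairwise distinctness of the three residual characters in full generality. The two easy clashes (between consecutive graded pieces) dissolve into $\p$-distinguishedness of $\bar\rho_{g_i}$, but the clash between the top and bottom graded pieces amounts to $(\bar\chi_{\op{cyc}}^{k-1}\bar\gamma_1\bar\gamma_2^{-1})^2\equiv 1$, which must be excluded by invoking the running hypothesis $(\psi\epsilon_i)^2\not\equiv 1\pmod\p$ together with the ramification disparity between $\bar\chi_{\op{cyc}}$ and $\bar\epsilon_i$ (exactly as in the proof of the vanishing of $H^0(\Q,W)$ given earlier in the excerpt).
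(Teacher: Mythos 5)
Your overall route coincides with the paper's: everything reduces to showing that the given isomorphism $\Phi\colon \bfA_{1,\psi_t}[\p]\to\bfA_{2,\psi_t}[\p]$ identifies the local conditions at the unique prime of $\Qcyc$ above $p$, which comes down to $\Phi$ carrying $\bfA_{1,\psi_t}^+[\p]$ onto $\bfA_{2,\psi_t}^+[\p]$; the induced isomorphism on the minus quotients, the commutative square with the restriction maps to $H^1(I_{\eta_p},\cdot)$, and the passage to kernels are exactly as in the paper, and the primes $q\neq p$ indeed play no role since the imprimitive Selmer group imposes no condition there.

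The gap is in your second step. You propose to prove that the three residual graded characters $\bar\chi_{\op{cyc}}^{2k-2}\bar\gamma_1^2\bar\psi_t$, $\bar\chi_{\op{cyc}}^{k-1}\bar\gamma_1\bar\gamma_2\bar\psi_t$, $\bar\gamma_2^2\bar\psi_t$ are pairwise distinct on $\op{G}_p$, ruling out a coincidence of the outer two by invoking $(\psi\epsilon_i)^2\not\equiv 1\pmod{\p}$. That cannot work: the twist $\bar\psi_t$ is common to all three pieces and cancels in every ratio, so distinctness is a purely local condition on $\bar\chi_{\op{cyc}}^{k-1}$ and the unramified characters $\bar\gamma_1,\bar\gamma_2$, i.e.\ on $k\bmod(p-1)$ and on $\bar\alpha_p$ and $\bar\beta_p$; the hypothesis on $\psi\epsilon_i$ has no bearing on it. In the paper that hypothesis is used only for the global statement $H^0(\Q,W)=0$, where irreducibility of $\bar\rho$ over $\Q$ is the essential input, and there is no local analogue since $\bar\rho\vert_{\op{G}_p}$ is reducible. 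Worse, the outer coincidence $(\bar\chi_{\op{cyc}}^{k-1}\bar\gamma_1\bar\gamma_2^{-1})^2=1$ can genuinely occur under the running hypotheses (for instance when $(p-1)\mid (k-1)$ and $\bar\beta_p\equiv-\bar\alpha_p\pmod{\p}$, which is compatible with ordinarity and $p$-distinguishedness since $p$ is odd), so the ``unique filtration of its type'' statement you aim for is false in this generality, not merely unproven. What is actually needed is weaker: one must only exclude a nonzero $\op{G}_p$-map $\bfA_{1,\psi_t}^+[\p]\to\bfA_{2,\psi_t}^-[\p]$. The paper gets this by contrasting ramification: up to the common twist, $\bfA^-_{i,\psi_t}[\p]$ is unramified while $\bfA^+_{i,\psi_t}[\p]$ is ramified; equivalently, $p$-distinguishedness alone already rules out the only adjacent coincidence $\bar\gamma_2^2\equiv\bar\chi_{\op{cyc}}^{k-1}\bar\gamma_1\bar\gamma_2$, which is what matters when the cosocle of $\bfA^+_{1,\psi_t}[\p]$ is the middle graded character. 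You should replace your third distinctness claim by an argument of this kind rather than appeal to $(\psi\epsilon_i)^2\not\equiv 1\pmod{\p}$.
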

\begin{proof}
Note that the $\op{G}_{p}$-action on $\bfA_{i, \psi_t}^+[\p]$ is ramified and that on $\bfA_{i, \psi_t}^-[\p]$ is via an unramified character. Let $\Phi:\bfA_{1,\psi_t}[\p]\xrightarrow{\sim} \bfA_{2,\psi_t}[\p]$ be a choice of isomorphism of Galois modules, in which case it is easy to see that $\Phi$ induces an isomorphism 
\[\Phi:\bfA_{1,\psi_t}^+[\p]\xrightarrow{\sim} \bfA_{2,\psi_t}^+[\p].\] As a result, we have an isomorphism of $\op{G}_{p}$-modules $\bfA_{1,\psi_t}^-[\p]\simeq \bfA_{2,\psi_t}^-[\p]$.
Clearly, $\Phi$ induces an isomorphism $H^1(\Q_S/\Qcyc, \bfA_{1,\psi_t}[\p])\xrightarrow{\sim} H^1(\Q_S/\Qcyc, \bfA_{2,\psi_t}[\p])$.
It suffices to show that for $q\in S$, the isomorphism $\Phi:\bfA_{1,\psi_t}[\p]\xrightarrow{\sim} \bfA_{2,\psi_t}[\p]$ induces an isomorphism 
\[
\mathcal{H}_q(\Qcyc,\bfA_{1,\psi_t}[\p])\xrightarrow{\sim} \mathcal{H}_q(\Qcyc,\bfA_{2,\psi_t}[\p]).
\]
This is clear for $q\neq p$.
For $q=p$, this follows from the fact that $\Phi$ induces an isomorphism $\bfA_{1,\psi_t}^-[\p]\xrightarrow{\sim} \bfA_{2,\psi_t}^-[\p]$.
\end{proof} 

\begin{Corollary}
\label{p-torsion of Sigma_0 fine selmer are iso}
The isomorphism $\bfA_{1,\psi_t}[\p]\simeq \bfA_{2,\psi_t}[\p]$ of Galois modules induces an isomorphism \[\Sel^{S_0}(\bfA_{1,\psi_t}/\Qcyc)[\p]\simeq \Sel^{S_0}(\bfA_{2,\psi_t}/\Qcyc)[\p].\]
\end{Corollary}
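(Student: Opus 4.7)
The plan is to deduce the corollary directly by chaining the two immediately preceding results, which together assemble into the desired isomorphism in a purely formal manner. Specifically, Proposition \ref{Kim 2.10}, applied to each form $g_i$ separately, provides a natural isomorphism
\[
\Sel^{S_0}(\bfA_{i,\psi_t}[\p]/\Qcyc) \xrightarrow{\sim} \Sel^{S_0}(\bfA_{i,\psi_t}/\Qcyc)[\p]
\]
induced by the Kummer sequence $0\to \bfA_{i,\psi_t}[\p]\to \bfA_{i,\psi_t}\xrightarrow{\p}\bfA_{i,\psi_t}\to 0$. The preceding Lemma then supplies an isomorphism
\[
\Sel^{S_0}(\bfA_{1,\psi_t}[\p]/\Qcyc) \xrightarrow{\sim} \Sel^{S_0}(\bfA_{2,\psi_t}[\p]/\Qcyc)
\]
coming from the fixed identification $\Phi\colon \bfA_{1,\psi_t}[\p]\xrightarrow{\sim}\bfA_{2,\psi_t}[\p]$ as Galois modules.

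First I would invoke Proposition \ref{Kim 2.10} for $i=1$ to identify $\Sel^{S_0}(\bfA_{1,\psi_t}/\Qcyc)[\p]$ with the residual Selmer group $\Sel^{S_0}(\bfA_{1,\psi_t}[\p]/\Qcyc)$. Next I would apply the preceding Lemma to transport this residual Selmer group across $\Phi$ to $\Sel^{S_0}(\bfA_{2,\psi_t}[\p]/\Qcyc)$. Finally I would invoke Proposition \ref{Kim 2.10} for $i=2$ in the reverse direction to identify this last group with $\Sel^{S_0}(\bfA_{2,\psi_t}/\Qcyc)[\p]$. Composing the three isomorphisms yields the claimed identification.

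Since both building blocks are already established, there is essentially no obstacle: the corollary is a formal consequence and the proof reduces to writing down the composition. The only point worth checking is naturality, namely that the Kummer-induced map of Proposition \ref{Kim 2.10} and the $\Phi$-induced map of the preceding Lemma are compatible with the local conditions defining the imprimitive Selmer groups at $p$; but this compatibility is already built into the construction of these maps (the Kummer map is functorial in the coefficient module, and the Lemma's isomorphism was verified to preserve the local conditions, including the subgroup $\mathcal{L}_{\eta_p}$ at $p$ via the induced isomorphism $\bfA_{1,\psi_t}^-[\p]\simeq \bfA_{2,\psi_t}^-[\p]$). Thus the composed isomorphism is well-defined, and the corollary follows.
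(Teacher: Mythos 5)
Your proposal is correct and is exactly the intended argument: the paper states this as an immediate corollary of Proposition \ref{Kim 2.10} and the preceding Lemma, obtained by composing the isomorphism $\Sel^{S_0}(\bfA_{1,\psi_t}/\Qcyc)[\p]\simeq\Sel^{S_0}(\bfA_{1,\psi_t}[\p]/\Qcyc)\simeq\Sel^{S_0}(\bfA_{2,\psi_t}[\p]/\Qcyc)\simeq\Sel^{S_0}(\bfA_{2,\psi_t}/\Qcyc)[\p]$, just as you do. Your remark on compatibility with the local condition at $p$ is already handled in the Lemma's proof, so nothing further is needed.
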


\begin{Lemma}
\label{lemma: the two mus are the same}
The $\mu$-invariant of the Selmer group $\Sel(\bfA_{i, \psi_t}/\Qcyc)$ coincides with that of $\Sel^{S_0}(\bfA_{i, \psi_t}/\Qcyc)$, i.e.,
\[\mu(\bfA_{i, \psi_t}/\Qcyc)=\mu^{S_0}(\bfA_{i, \psi_t}/\Qcyc).\]
\end{Lemma}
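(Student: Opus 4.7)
The plan is to exploit the exact sequence \eqref{quotient} together with Lemma \ref{local mu is 0}, reducing the statement to the additivity of $\mu$-invariants on short exact sequences of cotorsion $\Lambda$-modules. Since the surjectivity of the localization map $\lambda_i$ (established in the proposition above) yields the short exact sequence
\begin{equation*}
0 \longrightarrow \Sel(\bfA_{i,\psi_t}/\Qcyc) \longrightarrow \Sel^{S_0}(\bfA_{i,\psi_t}/\Qcyc) \longrightarrow \bigoplus_{q\in S_0}\cH_q(\bfA_{i,\psi_t}/\Qcyc) \longrightarrow 0,
\end{equation*}
taking Pontryagin duals produces a short exact sequence of finitely generated $\Lambda$-modules, and each of these modules is known (by hypothesis, or by Lemma \ref{local mu is 0}) to be $\Lambda$-torsion.

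Next I would invoke the standard additivity property of Iwasawa invariants: if $0 \to A \to B \to C \to 0$ is an exact sequence of finitely generated torsion $\Lambda$-modules, then $\mu(B) = \mu(A) + \mu(C)$. Applied to the dualized sequence above, this gives
\begin{equation*}
\mu^{S_0}(\bfA_{i,\psi_t}/\Qcyc) = \mu(\bfA_{i,\psi_t}/\Qcyc) + \sum_{q\in S_0} \mu\left(\cH_q(\bfA_{i,\psi_t}/\Qcyc)^\vee\right).
\end{equation*}

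Finally, Lemma \ref{local mu is 0} tells us that for every $q \in S_0$ (so $q \neq p$), the module $\cH_q(\bfA_{i,\psi_t}/\Qcyc)$ is cofinitely generated as a $\Z_p$-module, which is precisely the statement that its Pontryagin dual has $\mu$-invariant zero. Substituting into the additivity formula yields $\mu^{S_0}(\bfA_{i,\psi_t}/\Qcyc) = \mu(\bfA_{i,\psi_t}/\Qcyc)$, as desired. There is no real obstacle here: both ingredients (the exact sequence and the vanishing of the local $\mu$-invariants) have already been established in the preceding lemmas, so the proof is essentially a matter of assembling them via standard additivity.
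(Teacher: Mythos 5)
Your proof is correct and matches the paper's argument: the paper likewise deduces the equality of $\mu$-invariants from the exact sequence \eqref{quotient} (coming from surjectivity of the localization map) together with Lemma \ref{local mu is 0}, via additivity of Iwasawa invariants. Your write-up simply spells out the standard additivity step that the paper leaves implicit.
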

\begin{proof}
The result follows from Lemma \ref{local mu is 0}, which states that $\mathcal{H}_q(\bfA_{i, \psi_t}/\Qcyc)$ has $\mu=0$ for $q\neq p$.
\end{proof}
\begin{Proposition}\label{prop 3.10}Let the notation be as above. Then, we have that 
\[\mu(\bfA_{1,\psi_t}/\Qcyc)=0\Leftrightarrow \mu(\bfA_{2,\psi_t}/\Qcyc)=0.\]Moreover, if these $\mu$-invariants are $0$, then the imprimitive $\lambda$-invariants coincide, i.e.,
\[\lambda^{S_0}(\bfA_{1,\psi_t}/\Qcyc)=\lambda^{S_0}(\bfA_{2,\psi_t}/\Qcyc).\] Furthermore, we have \[\lambda(\bfA_{2,\psi_t}/\Qcyc)-\lambda(\bfA_{1,\psi_t}/\Qcyc)=\sum_{q\in S_0} \left(\sigma_q^{(1)}-\sigma_q^{(2)}\right),\]
where $\sigma_q^{(i)}$ is the $\Z_p$-corank of $\cH_q(\bfA_{i, \psi_t}/\Qcyc)$. 
\end{Proposition}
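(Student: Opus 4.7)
The plan is to deduce the three claims from the isomorphism of residual imprimitive Selmer groups supplied by Corollary \ref{p-torsion of Sigma_0 fine selmer are iso}, combined with Lemma \ref{lemma: the two mus are the same} (which equates $\mu$ with $\mu^{S_0}$) and the relation \eqref{relating im primitive and classical lambda invariant} (which expresses $\lambda^{S_0}$ in terms of $\lambda$ and the $\sigma_q^{(i)}$).

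First I would dispose of the $\mu$-invariant equivalence. By Lemma \ref{lemma: the two mus are the same}, it suffices to show that $\mu^{S_0}(\bfA_{1,\psi_t}/\Qcyc) = 0 \Leftrightarrow \mu^{S_0}(\bfA_{2,\psi_t}/\Qcyc) = 0$. An immediate consequence of the structure theorem for finitely generated $\Lambda$-modules is that a cofinitely generated cotorsion $\Lambda$-module $X$ satisfies $\mu(X) = 0$ if and only if $X[\varpi]$ is finite. Since Corollary \ref{p-torsion of Sigma_0 fine selmer are iso} identifies the two $\varpi$-torsion subgroups, this finiteness (and hence the vanishing of $\mu^{S_0}$) transfers between $i = 1$ and $i = 2$.

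The core of the argument is the equality of imprimitive $\lambda$-invariants under the assumption $\mu^{S_0} = 0$. My goal is to show $\dim_{\F}\Sel^{S_0}(\bfA_{i,\psi_t}/\Qcyc)[\varpi] = \lambda^{S_0}(\bfA_{i,\psi_t}/\Qcyc)$ for both $i$; combined with Corollary \ref{p-torsion of Sigma_0 fine selmer are iso} this forces the two $\lambda^{S_0}$'s to agree. The hard part will be establishing that the Pontryagin dual $Y_i := \Sel^{S_0}(\bfA_{i,\psi_t}/\Qcyc)^\vee$ has no non-zero finite $\Lambda$-submodule. This is a cohomological statement proven along the lines of Greenberg's method as implemented in \cite{GV00}, resting crucially on the $H^0$ vanishings and on the surjectivity of the global-to-local restriction map established in the preceding lemmas of this section. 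Once this input is available, I would choose an injective pseudo-isomorphism $Y_i \hookrightarrow E_i$ into an elementary $\Lambda$-module, with finite cokernel $C_i$; the hypothesis $\mu = 0$ forces $E_i = \bigoplus_j \Lambda/(f_j^{b_j})$ with $f_j$ distinguished polynomials, so $\varpi$ is a non-zero divisor on $E_i$, and the snake lemma applied to $0 \to Y_i \to E_i \to C_i \to 0$ then yields $\dim_{\F}(Y_i/\varpi Y_i) = \dim_{\F}(E_i/\varpi E_i) = \lambda^{S_0}(\bfA_{i,\psi_t}/\Qcyc)$, as needed.

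Finally the formula comparing primitive $\lambda$-invariants is automatic: subtracting the two instances of \eqref{relating im primitive and classical lambda invariant} and cancelling the now-equal imprimitive $\lambda$'s yields
\[
\lambda(\bfA_{2,\psi_t}/\Qcyc) - \lambda(\bfA_{1,\psi_t}/\Qcyc) = \sum_{q \in S_0}\left(\sigma_q^{(1)} - \sigma_q^{(2)}\right).
\]
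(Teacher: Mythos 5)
Your proposal is correct and takes essentially the same route as the paper's proof: the $\mu$-equivalence is obtained exactly as in the paper from Lemma \ref{lemma: the two mus are the same}, Corollary \ref{p-torsion of Sigma_0 fine selmer are iso} and the criterion ``$\mu=0$ iff the $\p$-torsion is finite'', and the equality $\lambda^{S_0}=\dim_{\F}\Sel^{S_0}(\bfA_{i,\psi_t}/\Qcyc)[\p]$ rests on the same key input (no finite-index $\Lambda$-submodules of the Selmer group, equivalently no finite submodules of its dual), which the paper likewise imports from \cite[Proposition 2.5]{GV00}. The only cosmetic difference is that the paper phrases the structure-theory step as cofreeness of the discrete Selmer group over $\cO$, whereas you run a snake-lemma computation on the dual against an elementary module; the concluding subtraction of \eqref{relating im primitive and classical lambda invariant} is identical.
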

\begin{proof}
Set $\mathfrak{X}_i$ to denote $\Sel^{S_0}(\bfA_{i, \psi_t}/\Qcyc)$.
Lemma \ref{lemma: the two mus are the same} asserts that the $\mu$-invariant of $\mathfrak{X}_i$ coincides with $\mu(\bfA_{i, \psi_t}/\Qcyc)$.
Therefore, $\mu(\bfA_{i, \psi_t}/\Qcyc)=0$ if and only if $\mathfrak{X}_i$ is cofinitely generated as a $\Z_p$-module.
Note that $\mathfrak{X}_i$ is cofinitely generated as a $\Z_p$-module if and only if $\mathfrak{X}_i[\p]$ has finite cardinality.
Corollary \ref{p-torsion of Sigma_0 fine selmer are iso} asserts that $\mathfrak{X}_1[\p]\simeq \mathfrak{X}_2[\p]$; thus,
\[\mu(\bfA_{1,\psi_t}/\Qcyc)=0\Leftrightarrow \mu(\bfA_{2,\psi_t}/\Qcyc)=0.\]
\par Assume that $\mathfrak{X}_1[\p]$ (or equivalently $\mathfrak{X}_2[\p]$) is finite.
It follows from \cite[Proposition 2.5]{GV00} that $\mathfrak{X}_i$ has no proper $\Lambda$-submodules of finite index.
It is an easy exercise to show that $\mathfrak{X}_i$ therefore is a cofree $\cO$-module.
Therefore, $\mathfrak{X}_i\simeq (K/\cO)^{\lambda(\mathfrak{X}_i)}$, where $\lambda(\mathfrak{X}_i):=\frac{\lambda^{S_0}(\bfA_{i, \psi_t}/\Qcyc)}{[K:\Q_p]}$.
As a result,
\[\lambda(\mathfrak{X}_i)=\op{dim}_{\F_p} \mathfrak{X}_i[\p],\] and the isomorphism $\mathfrak{X}_1[\p]\simeq \mathfrak{X}_2[\p]$ implies that $\lambda(\mathfrak{X}_1)=\lambda(\mathfrak{X}_2)$. As a result, it follows that
\[\lambda^{S_0}(\bfA_{1,\psi_t}/\Qcyc)=\lambda^{S_0}(\bfA_{2,\psi_t}/\Qcyc).\]
The result then follows from (\ref{relating im primitive and classical lambda invariant}).
\end{proof}

\par Let $g_1$ and $g_2$ be $\p$-congruent Hecke eigencuspforms satisfying the conditions stated in the introduction. We denote by $\mu^{\op{alg}}_{S_0}(r_{g_i}\otimes \psi_t)$ and $\lambda^{\op{alg}}_{S_0}(r_{g_i}\otimes \psi_t)$ to denote the Iwasawa invariants of the imprimitive Selmer group $\op{Sel}_{p^\infty}^{S_0}(\bfA_{i,\psi_t}/\Q_{\op{cyc}})$ obtained by dropping conditions at the primes $q\in S_0$. Denote by $\mu^{\op{an}}_{S_0}(r_{g_i}\otimes \psi_t)$ and $\lambda^{\op{an}}_{S_0}(r_{g_i}\otimes \psi_t)$ the Iwasawa invariants of the $S_0$-imprimitive $p$-adic L-function $\mathscr{L}^{\op{an}}_{S_0}(r_{g_i}\otimes \psi_t)$ obtained by dropping Euler factors at primes $q\in S_0$.

\begin{Proposition}\label{boring prop}
Let $g_1$ and $g_2$ be as above and $\ast\in \{\op{an}, \op{alg}\}$. Then, $\mu^{\ast}(r_{g_i}\otimes \psi_t)=\mu^{\ast}_{S_0}(r_{g_i}\otimes \psi_t)$ for $i=1,2$ and 
\[\lambda^{\ast}_{S_0}(r_{g_i}\otimes \psi_t)=\lambda^{\ast}(r_{g_i}\otimes \psi_t)+\sum_{q\in S_0} \sigma_i^{(q)}.\]
\end{Proposition}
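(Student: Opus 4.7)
The plan is to verify the proposition case by case on the superscript $\ast\in\{\op{an},\op{alg}\}$, by assembling results already collected in Sections 2 and \ref{s 3}; essentially no new calculation is required.

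For $\ast=\op{alg}$, both identities have in fact been proved already. The equality $\mu^{\op{alg}}_{S_0}(r_{g_i}\otimes\psi_t)=\mu^{\op{alg}}(r_{g_i}\otimes\psi_t)$ is precisely Lemma \ref{lemma: the two mus are the same}, whose proof rests on Lemma \ref{local mu is 0} (the vanishing of the $\mu$-invariant of each local cohomology group $\cH_q(\bfA_{i,\psi_t}/\Qcyc)$ for $q\in S_0$). The $\lambda$-invariant formula is equation \eqref{relating im primitive and classical lambda invariant}, obtained from the short exact sequence \eqref{quotient} by additivity of the $\Z_p$-corank.

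For $\ast=\op{an}$, the plan is to invoke the multiplicative factorization \eqref{prim-imprim},
\[
\mathscr{L}^{\op{an}}_{S_0}(r_{g_i}\otimes\psi_t)=\prod_{q\in S_0}P_q(\psi_t)\cdot \mathscr{L}^{\op{an}}(r_{g_i}\otimes\psi_t)
\]
in $\Lambda$, together with the lemma established near the end of Section 2 that each Euler factor $P_q(\psi_t)$ has $\mu$-invariant zero. Since $\mu$- and $\lambda$-invariants are additive under multiplication in $\Lambda$ (an immediate consequence of Weierstrass preparation applied to each factor), this yields
\[
\mu^{\op{an}}_{S_0}(r_{g_i}\otimes\psi_t)=\mu^{\op{an}}(r_{g_i}\otimes\psi_t),\qquad \lambda^{\op{an}}_{S_0}(r_{g_i}\otimes\psi_t)=\lambda^{\op{an}}(r_{g_i}\otimes\psi_t)+\sum_{q\in S_0}\deg P_q(\psi_t).
\]

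The single substantive step, and the main obstacle, is the identification $\deg P_q(\psi_t)=\sigma_i^{(q)}$ needed to make the correction terms on the two sides literally match. Since $\sigma_i^{(q)}$ was defined algebraically as $\op{corank}_{\Z_p}\cH_q(\bfA_{i,\psi_t}/\Qcyc)$, I would use local Tate duality together with the local Euler characteristic formula over the completions $\Q_{\op{cyc},\eta}$ at primes $\eta\vert q$ to rewrite this corank as the $K$-dimension of the inertia invariants of $\bfV_{i,\psi_t}$, equivalently as the degree of the characteristic polynomial of geometric Frobenius acting on those invariants. The equality of this Galois-theoretic Euler factor with the analytic factor $P_q(\psi_t)$ arising in Schmidt's Euler product is the local Langlands correspondence for the three-dimensional representation attached to $r_{g_i}\otimes\psi_t$, as used in \cite{GJ78} and \cite{schmidt88} and already flagged in the discussion following Proposition \ref{analytic-invariants-intro}. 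This compatibility is the only deep input; everything else is bookkeeping.
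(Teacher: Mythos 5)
Your proposal is correct and follows essentially the same route as the paper: the algebraic case is exactly Lemma \ref{local mu is 0} together with \eqref{relating im primitive and classical lambda invariant}, and the analytic case is the factorization \eqref{prim-imprim} plus the vanishing of the $\mu$-invariants of the factors $P_q$, with the matching of $\sigma_i^{(q)}$ to the degree of the analytic Euler factor resting, as you say, on the local Langlands correspondence for $\op{GL}(3)$ from \cite{GJ78}. Your extra remarks on computing $\op{corank}_{\Z_p}\cH_q(\bfA_{i,\psi_t}/\Qcyc)$ via local duality are a harmless elaboration of the same comparison the paper invokes (following \cite{GV00}), not a different argument.
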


\begin{proof}
When $\ast=\op{alg}$, the result follows from Lemma \ref{local mu is 0} and \eqref{relating im primitive and classical lambda invariant}. On the other hand, when $\ast=\op{an}$, the result follows from (\ref{prim-imprim}). The equality of the local factors $\sigma_i(q)$ 
comes from the local Langlands correspondence of \cite{GJ78} for $GL(3)$. \end{proof}

\begin{Th}
Let $g_1$ and $g_2$ satisfy the conditions stated in the introduction. Suppose the relations $\mu^{\op{alg}}(r_{g_1}\otimes \psi_t)=\mu^{\op{an}}(r_{g_1}\otimes \psi_t)=0$ and $\lambda^{\op{alg}}(r_{g_1}\otimes \psi_t)=\lambda^{\op{an}}(r_{g_1}\otimes \psi_t)$ hold. Then, we have further equalities $\mu^{\op{alg}}(r_{g_2}\otimes \psi_t)=\mu^{\op{an}}(r_{g_2}\otimes \psi_t)=0$ and $\lambda^{\op{alg}}(r_{g_2}\otimes \psi_t)=\lambda^{\op{an}}(r_{g_2}\otimes \psi_t)$.
\end{Th}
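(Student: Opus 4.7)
The plan is to reduce the primitive statement to the corresponding imprimitive statement, transport the latter across the congruence using the main technical results already established, and then undo the reduction. First, I would apply Proposition \ref{boring prop} to the hypotheses on $g_1$: since $\sigma_1^{(q)}$ does not depend on whether we are on the algebraic or analytic side (the Euler factors match by the local Langlands correspondence of \cite{GJ78} as noted there), the hypothesized equalities of primitive invariants $\mu^{\op{alg}}(r_{g_1}\otimes\psi_t)=\mu^{\op{an}}(r_{g_1}\otimes\psi_t)=0$ and $\lambda^{\op{alg}}(r_{g_1}\otimes\psi_t)=\lambda^{\op{an}}(r_{g_1}\otimes\psi_t)$ translate directly into the equalities of $S_0$-imprimitive invariants
$$\mu_{S_0}^{\op{alg}}(r_{g_1}\otimes\psi_t)=\mu_{S_0}^{\op{an}}(r_{g_1}\otimes\psi_t)=0 \quad \text{and} \quad \lambda_{S_0}^{\op{alg}}(r_{g_1}\otimes\psi_t)=\lambda_{S_0}^{\op{an}}(r_{g_1}\otimes\psi_t).$$

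Next, I would transport these imprimitive equalities from $g_1$ to $g_2$. On the algebraic side, Proposition \ref{prop 3.10} applied to the pair $g_1,g_2$ gives
$\mu_{S_0}^{\op{alg}}(r_{g_2}\otimes\psi_t)=0$ and $\lambda_{S_0}^{\op{alg}}(r_{g_1}\otimes\psi_t)=\lambda_{S_0}^{\op{alg}}(r_{g_2}\otimes\psi_t)$. On the analytic side, Proposition \ref{p-adic LFs congruent} provides a congruence
$\mathscr{L}_{S_0}^{\op{an}}(r_{g_1}\otimes\psi_t)\equiv u\cdot\mathscr{L}_{S_0}^{\op{an}}(r_{g_2}\otimes\psi_t)\pmod{\pp}$
with $u$ a unit, and standard Weierstrass preparation applied to this congruence of Iwasawa functions forces $\mu_{S_0}^{\op{an}}(r_{g_2}\otimes\psi_t)=0$ and $\lambda_{S_0}^{\op{an}}(r_{g_1}\otimes\psi_t)=\lambda_{S_0}^{\op{an}}(r_{g_2}\otimes\psi_t)$. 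Chaining the four equalities yields $\mu_{S_0}^{\op{alg}}(r_{g_2}\otimes\psi_t)=\mu_{S_0}^{\op{an}}(r_{g_2}\otimes\psi_t)=0$ and $\lambda_{S_0}^{\op{alg}}(r_{g_2}\otimes\psi_t)=\lambda_{S_0}^{\op{an}}(r_{g_2}\otimes\psi_t)$.

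Finally, I would invoke Proposition \ref{boring prop} once more, this time for $g_2$, to subtract the common correction $\sum_{q\in S_0}\sigma_2^{(q)}$ from both sides and recover the primitive equalities $\mu^{\op{alg}}(r_{g_2}\otimes\psi_t)=\mu^{\op{an}}(r_{g_2}\otimes\psi_t)=0$ and $\lambda^{\op{alg}}(r_{g_2}\otimes\psi_t)=\lambda^{\op{an}}(r_{g_2}\otimes\psi_t)$. The argument is essentially bookkeeping and the only conceptual obstacle is the matching of the local correction factors $\sigma_i^{(q)}$ on the two sides; this is already absorbed into Proposition \ref{boring prop} via the local Langlands correspondence for $\op{GL}_3$, so nothing further needs to be verified. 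In particular this theorem follows in a few lines from Propositions \ref{prop 3.10}, \ref{boring prop}, and \ref{p-adic LFs congruent}, which is exactly why it was deferred to the end of the section.
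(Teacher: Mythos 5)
Your proposal is correct and follows essentially the same route as the paper's own proof: reduce to the $S_0$-imprimitive invariants via Proposition \ref{boring prop} (which already encodes the matching of the local factors $\sigma_i^{(q)}$ on both sides), transfer from $g_1$ to $g_2$ using Proposition \ref{p-adic LFs congruent} together with Weierstrass preparation on the analytic side and Proposition \ref{prop 3.10} on the algebraic side, and then apply Proposition \ref{boring prop} again to return to the primitive invariants for $g_2$. Nothing is missing.
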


\begin{proof}
Let $\ast\in \{\op{an}, \op{alg}\}$, it follows from Proposition \ref{boring prop} that the equalities $\mu^{\op{alg}}(r_{g_i}\otimes \psi_t)=\mu^{\op{an}}(r_{g_i}\otimes \psi_t)=0$ and $\lambda^{\op{alg}}(r_{g_i}\otimes \psi_t)=\lambda^{\op{an}}(r_{g_i}\otimes \psi_t)$ hold if and only if the relations $\mu_{S_0}^{\op{alg}}(r_{g_i}\otimes \psi_t)=\mu_{S_0}^{\op{an}}(r_{g_i}\otimes \psi_t)=0$ and $\lambda_{S_0}^{\op{alg}}(r_{g_i}\otimes \psi_t)=\lambda_{S_0}^{\op{an}}(r_{g_i}\otimes \psi_t)$ hold. Therefore, by assumption, these relations hold for $i=1$, and we are to deduce them for $i=2$. Proposition \ref{p-adic LFs congruent} asserts that there is a $p$-adic unit $u$ such that 
$\mathscr{L}^{\op{an}}_{S_0}(r_{g_1}\otimes \psi_t)\equiv u \mathscr{L}^{\op{an}}_{S_0}(r_{g_2}\otimes \psi_t)\mod{\p}$.

From the above congruence, we find that \[\mu^{\op{an}}_{S_0}(r_{g_1}\otimes \psi_t)=0\Leftrightarrow \mu^{\op{an}}_{S_0}(r_{g_2}\otimes \psi_t)=0\] and if these $\mu$-invariants vanish, then, \[\lambda^{\op{an}}_{S_0}(r_{g_1}\otimes \psi_t)= \lambda^{\op{an}}_{S_0}(r_{g_2}\otimes \psi_t).\]On the other hand, the same assertion for algebraic L-functions holds by Proposition \ref{prop 3.10}. Therefore, the result follows.
\end{proof}

\subsection{Data availability}  No data was generated or analysed in this paper. 

\subsection{Compliance with Ethical Standards}  The authors declare that they have no conflict of interest. 


\bibliographystyle{abbrv}
\bibliography{references}

\end{document}